\theoremstyle{plain}
\newtheorem{theorem}{Theorem}[section]
\theoremstyle{remark}
\newtheorem{remark}[theorem]{Remark}
\theoremstyle{plain}
\newtheorem{corollary}[theorem]{Corollary}
\newtheorem{lemma}[theorem]{Lemma}
\newtheorem{proposition}[theorem]{Proposition}
\numberwithin{equation}{section}
\def\N{{\mathbb N}}
\def\R{{\mathbb R}}
\def\C{{\mathbb C}}
\newcommand{\E}{{\mathbb E}}
\newcommand{\F}{{\mathscr F}}
\newcommand{\g}{\gamma}
\newcommand{\e}{\varepsilon}
\newcommand{\tr}{\text{tr}}
\newcommand{\ext}{\text{ext}}
\newcommand{\calL}{{\mathscr L}}
\newcommand{\one}{{{\bf 1}}}
\newcommand{\wh}{\widehat}
\newcommand{\nn}{|\!|\!|}
\newcommand{\Schw}{{\mathscr S}}
\newcommand{\TD}{{\mathscr S'}}
\def\typeout#1{\message{^^J}\message{#1}\message{^^J}}
\newif\ifSRCOK \SRCOKtrue
\def\EJECT{\SRC\eject}
\def\WinEdt#1{\typeout{:#1}}
\gdef\MainFile{\jobname.tex}
\gdef\CurrentInput{\MainFile}
\def\SRC{\ifSRCOK%
  \ifnum\inputlineno>\LASTLINE%
    \ifnum\LASTLINE<0%
      \global\PAGETOP=\inputlineno%
    \fi%
    \global\LASTLINE=\inputlineno%
    \ifnum\INPSP=0%
      \ifnum\inputlineno>\PAGETOP%
        
      \fi%
    \else%
      
    \fi%
  \fi%
\fi}
\def\PUSH#1{%
\SRC%
\ifnum\INPSP=0 \global\let\INPSTACKA=\CurrentInput \else%
\ifnum\INPSP=1 \global\let\INPSTACKB=\CurrentInput \else%
\ifnum\INPSP=2 \global\let\INPSTACKC=\CurrentInput \else%
\ifnum\INPSP=3 \global\let\INPSTACKD=\CurrentInput \else%
\ifnum\INPSP=4 \global\let\INPSTACKE=\CurrentInput \else%
\ifnum\INPSP=5 \global\let\INPSTACKF=\CurrentInput \else%
               \global\let\INPSTACKX=\CurrentInput \fi\fi\fi\fi\fi\fi%
\gdef\CurrentInput{#1}%
\WinEdt{<+ \CurrentInput}%
\global\LASTLINE=0%
\ifSRCOK\fi%
\global\advance\INPSP by 1}
\def\POP{%
\ifnum\INPSP>0 \global\advance\INPSP by -1  \fi%
\ifnum\INPSP=0 \global\let\CurrentInput=\INPSTACKA \else%
\ifnum\INPSP=1 \global\let\CurrentInput=\INPSTACKB \else%
\ifnum\INPSP=2 \global\let\CurrentInput=\INPSTACKC \else%
\ifnum\INPSP=3 \global\let\CurrentInput=\INPSTACKD \else%
\ifnum\INPSP=4 \global\let\CurrentInput=\INPSTACKE \else%
\ifnum\INPSP=5 \global\let\CurrentInput=\INPSTACKF \else%
               \global\let\CurrentInput=\INPSTACKX \fi\fi\fi\fi\fi\fi%
\WinEdt{<-}%
\global\LASTLINE=\inputlineno%
\global\advance\LASTLINE by -1%
\SRC}
\def\INPUT#1{\relax}
\def
\let\originalxxxeverypar\everypar
\newtoks\everypar
\everymath\expandafter{\the\everymath\expandafter\SRC}
\output\expandafter{\expandafter\SRCOKfalse\the\output}
\newif\ifSRCOK \SRCOKtrue
\gdef\MainFile{\jobname.tex}
\gdef\CurrentInput{\MainFile}
\def\EJECT{\SRC\eject}
\def\WinEdt#1{\typeout{:#1}}
\def\SRC{\ifSRCOK%
  \ifnum\inputlineno>\LASTLINE%
    \ifnum\LASTLINE<0%
      \global\PAGETOP=\inputlineno%
    \fi%
    \global\LASTLINE=\inputlineno%
    \ifnum\INPSP=0%
      \ifnum\inputlineno>\PAGETOP%
      \fi%
    \else%
    \fi%
  \fi%
\fi}
\def\PUSH#1{%
\SRC%
\ifnum\INPSP=0 \global\let\INPSTACKA=\CurrentInput \else%
\ifnum\INPSP=1 \global\let\INPSTACKB=\CurrentInput \else%
\ifnum\INPSP=2 \global\let\INPSTACKC=\CurrentInput \else%
\ifnum\INPSP=3 \global\let\INPSTACKD=\CurrentInput \else%
\ifnum\INPSP=4 \global\let\INPSTACKE=\CurrentInput \else%
\ifnum\INPSP=5 \global\let\INPSTACKF=\CurrentInput \else%
               \global\let\INPSTACKX=\CurrentInput \fi\fi\fi\fi\fi\fi%
\gdef\CurrentInput{#1}%
\WinEdt{<+ \CurrentInput}%
\global\LASTLINE=0%
\ifSRCOK\fi%
\global\advance\INPSP by 1}
\def\POP{%
\ifnum\INPSP>0 \global\advance\INPSP by -1  \fi%
\ifnum\INPSP=0 \global\let\CurrentInput=\INPSTACKA \else%
\ifnum\INPSP=1 \global\let\CurrentInput=\INPSTACKB \else%
\ifnum\INPSP=2 \global\let\CurrentInput=\INPSTACKC \else%
\ifnum\INPSP=3 \global\let\CurrentInput=\INPSTACKD \else%
\ifnum\INPSP=4 \global\let\CurrentInput=\INPSTACKE \else%
\ifnum\INPSP=5 \global\let\CurrentInput=\INPSTACKF \else%
               \global\let\CurrentInput=\INPSTACKX \fi\fi\fi\fi\fi\fi%
\WinEdt{<-}%
\global\LASTLINE=\inputlineno%
\global\advance\LASTLINE by -1%
\SRC}
\def\INPUT#1{\relax}
\let\OldINCLUDE=\include
\def\include#1{
\EJECT%
\PUSH{#1.tex}%
\OldINCLUDE{#1}%
\POP}
\def
\let\originalxxxeverypar\everypar
\newtoks\everypar
\everymath\expandafter{\the\everymath\expandafter\SRC}
\let\zzzxxxbibliography=\bibliography
\def\bibliography#1{\PUSH{\jobname.bbl}\zzzxxxbibliography{#1}\POP}
\output\expandafter{\expandafter\SRCOKfalse\the\output}
\begin{document}

\author{Martin Meyries}
\address{Department of Mathematics,
Karlsruhe Institute of Technology, 76128 Karlsruhe, Germany.}
\email{martin.meyries@mathematik.uni-halle.de}

\author{Mark Veraar}
\address{Delft Institute of Applied Mathematics\\
Delft University of Technology \\ P.O. Box 5031\\ 2600 GA Delft\\The
Netherlands} \email{M.C.Veraar@tudelft.nl}

\title[Traces and embeddings of anisotropic function spaces]{Traces and embeddings of anisotropic function spaces}

\keywords{Weighted function spaces, power weights, vector-valued function spaces, anisotropic function spaces, Besov spaces, Triebel-Lizorkin spaces, Bessel-potential spaces, Sobolev spaces,  traces, mixed derivative embeddings, Stefan problem with Gibbs-Thomson correction}
\subjclass[2000]{46E35, 46E40, 80A22}

\thanks{The first author was supported by the project ME 3848/1-1 of the Deutsche Forschungsgemeinschaft (DFG). The second author was supported by a VENI subsidy 639.031.930 of the Netherlands Organisation for Scientific Research (NWO)}

\maketitle
\begin{abstract}
In this paper we characterize the trace spaces of a class of weighted function spaces of intersection type with mixed regularities. To a large extent we can overcome the difficulty of mixed scales by employing a microscopic improvement in Sobolev and mixed derivative embeddings with fixed integrability. We apply the general results to prove maximal $L^p$-$L^q$-regularity for the linearized, fully inhomogeneous two-phase Stefan problem with Gibbs-Thomson correction.
\end{abstract}

\section{Introduction}

In recent years the $L^p$-$L^q$-maximal regularity approach to parabolic PDEs has attracted much attention. In the influential works \cite{DHP, KuWe, We} a new theory of maximal $L^p$-regularity was founded and many classes of examples are shown to have this property.
Maximal regularity means that there is an isomorphism between the data and the solution
of the linear problem in suitable function spaces. Having established such a sharp regularity
result one can treat quasilinear problems by quite simple tools, like the contraction principle and the implicit function theorem (see \cite{Amann05,ClLi,Pruss02} and references therein).
Due to scaling invariance of PDEs one  requires $p\neq q$ for the underlying function space $L^p(L^q)$ (see e.g.  \cite[Section 3]{Can04} and \cite{Giga86}), where $p$ is the integrability in time and $q$ is for the space variable.

In the $L^p$-$L^q$-approach to linear and quasilinear parabolic problems with nonhomogeneous boundary conditions it is essential to know the precise temporal and spatial trace spaces of the unknowns. In this way different types and scales of function spaces meet and come naturally into play. For example, in the $L^p$-$L^q$-approach to the heat equation one looks for strong solutions in the parabolic Sobolev space
$$H^{1,p}(\R_+; L^q(\R^d)) \cap L^p(\R_+; H^{2,q}(\R^d)),$$
whose temporal trace space at $t = 0$ is well-known to be the Besov space $B_{q,p}^{2-2/p}(\R^d)$.
More recently, it  turned out that the spatial trace space at the coordinate $x_d = 0$ is the intersection space
\begin{equation}\label{eq:Weidemaierspace}
F_{p,q}^{1-1/(2q)}(\R_+; L^q(\R^{d-1})) \cap L^p(\R_+; B_{q,q}^{2-1/q}(\R^{d-1})),
\end{equation}
where $F_{p,q}^s$ denotes a Triebel-Lizorkin space.
The spatial trace space \eqref{eq:Weidemaierspace} was obtained in \cite{Wei02, Wei05} for $q\leq p$ and more general cases were considered in \cite{DHP07, JS08}. We conclude that the $L^p$-$L^q$-approach for already such a basic example as the heat equation with inhomogeneous boundary conditions involves three scales of function spaces.

In the case of free boundary problems or, more generally, for parabolic boundary value problems of relaxation type (see \cite{DPZ08}), a second unknown  is involved, which only lives on the boundary. For instance, for the transformed and linearized two-phase Stefan problem with Gibbs-Thomson correction, the optimal space for the boundary unknown is
\begin{equation}\label{intro-7}
 F_{p,q}^{3/2-1/(2q)}(\R_+; L^q(\R^{d-1}))\cap F_{p,q}^{1-1/(2q)}(\R_+; H^{2,q}(\R^{d-1})) \cap L^p(\R_+; B_{q,q}^{4-1/q}(\R^{d-1})),
\end{equation}
see \cite{DHP07, DK14, Kaipthesis} or Section \ref{sec:Stefan} below. The corresponding original problem is a free boundary problem which models the melting of ice, see \cite{EPS03} and the references therein. To treat the problem with nontrivial initial values one now has to determine the precise temporal trace space at $t = 0$ of this triple intersection space. If more than one boundary condition is involved, then mixed derivative (or Newton polygon) embeddings are important to  determine the optimal regularities of all boundary inhomogeneities, see \cite{DK14, DPZ08, DSS08, EPS03, Kaipthesis, MS11, MS12} and Section \ref{sec:Newton}.

Stochastic parabolic equations and Volterra integral equations (see \cite{NVWmax, Zac03, Zac05}) are further scenarios in which intersection spaces, even in an abstract form, come naturally into play. For an operator $A$ with a bounded $H^\infty$-calculus on a space $X=L^q$ with $q\geq 2$ it is shown in \cite{NVWmax} that the pathwise optimal regularity in the context of stochastic maximal $L^p$-regularity is
$$H^{s,p}(\R_+;X) \cap L^p(\R_+;D(A^{1/2-s})), \qquad s\in [0,1/2).$$
In many situations, e.g., when boundary conditions are involved, the fractional power domain $D(A^{1/2-s})$ is only a closed subspace of a function space as above. This is our motivation to study intersection spaces in an abstract form.\medskip

In a next step it is natural to introduce temporal power weights $w_{\gamma}(t) = |t|^\gamma$ for the intersection spaces. Indeed, in  many cases maximal regularity properties of parabolic problems are independent of the weight (see \cite{MS12, PS04}).  The weights yield flexibility for the initial regularity and thus a scale of phase spaces where the solution semiflow acts. This can be used to show an intrinsic smoothing effect of the parabolic problem and compactness of the semiflow, which is an important property for the investigation of the long-time behavior of solutions (see \cite{KPW, PSSS12}). As it turns out, power weights are not only important for the applications. They are in fact our main technical tool to determine temporal trace spaces, even in the unweighted case.

\medskip

In this article we can to a large extent overcome the difficulty of mixed regularity scales and study trace spaces and mixed derivative embeddings for a general class of intersection spaces.
The result  allows to characterize the regularity of the initial values in the temporally weighted $L^p$-$L^q$-approach to parabolic problems with general boundary conditions, as treated in \cite{DK14, DPZ08, MS12} (see  Remark \ref{remark-DPZ} for details).

For the trace operator $$\tr_0 u = u|_{t=0}$$ on intersection spaces we have the following result. The notation $\tr_0(\mathcal A) = \mathcal B$ means that $\tr_0:{\mathcal A} \to \mathcal B$ is continuous and surjective, and that it has a continuous right-inverse. An operator $A$ on a Banach space $X$ is called positive if $A$ is densely defined and if $\|(\lambda+A)^{-1}\|_{\calL(X)} \leq \frac{C}{1+\lambda}$ for $\lambda \geq 0$. For $\alpha > 0$ and $r\in [1,\infty]$ we let $D_A(\alpha,r) = (X,D(A^m))_{\alpha/m,r}$ be a real interpolation space, where $m > \alpha$ is an arbitrary integer (see Section \ref{Sec-interp}).

\begin{theorem}\label{thm:main3}
Let $A$ be a positive operator on a Banach space $X$, let $p\in (1,\infty)$, $q,r\in [1, \infty]$ and $\gamma \in(-1, p-1)$. Consider the weight $w_\gamma(t) = |t|^\gamma$ and  suppose that $s\in \R$ and $\alpha > 0$ satisfy $s < \frac{1+\gamma}{p} <  s+\alpha$. Then
\begin{equation}\label{intro-Ftrace}
 \emph{\text{tr}}_0 \left (F^{s+\alpha}_{p,q}(\R,w_\gamma;X) \cap F^{s}_{p,q}(\R,w_\gamma;D_A(\alpha,r))\right) = D_A\Big(s+\alpha - \frac{1+\gamma}{p},p\Big),
\end{equation}
\begin{equation}\label{intro-Btrace}
 \emph{\text{tr}}_0 \left (B^{s+\alpha}_{p,q}(\R,w_\gamma;X) \cap B^{s}_{p,q}(\R,w_\gamma;D_A(\alpha,r))\right) = D_A\Big(s+\alpha - \frac{1+\gamma}{p},q\Big).
\end{equation}
\end{theorem}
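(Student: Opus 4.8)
The plan is to prove \eqref{intro-Ftrace} and \eqref{intro-Btrace} in parallel, writing $\mathcal{A}$ for the intersection space on the left-hand side and setting $\sigma:=s+\alpha-\frac{1+\gamma}{p}$; the hypothesis $s<\frac{1+\gamma}{p}<s+\alpha$ is precisely $0<\sigma<\alpha$, so $D_A(\sigma,p)$ and $D_A(\sigma,q)$ are genuine intermediate spaces between $X$ and $D_A(\alpha,r)$, and $\tr_0$ is already bounded from $\mathcal{A}$ to $X$ because $F^{s+\alpha}_{p,q}(\R,w_\gamma;X)\embed C_{\mathrm b}(\R;X)$ (here $s+\alpha>\frac{1+\gamma}{p}$ is used). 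As usual the statement $\tr_0(\mathcal{A})=\mathcal{B}$ splits into two halves: (i) boundedness $\tr_0\colon\mathcal{A}\to\mathcal{B}$, and (ii) construction of a continuous right inverse $\mathcal{B}\to\mathcal{A}$. Before either step I would normalise the smoothness parameter $s$ to a convenient range by composing with a temporal Bessel-potential operator $(1-\partial_t^2)^{-s/2}$, which is an isomorphism on all the weighted vector-valued Triebel--Lizorkin and Besov spaces that occur; this is routine from the $A_p$-weighted Fourier-multiplier calculus developed in the earlier sections, so below $s$ is kept general.

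The engine behind both halves is the dyadic (Littlewood--Paley) decomposition $u=\sum_{j\ge 0}\Delta_j u$ in the time variable, the discrete $K/J$-decomposition of elements of $D_A(\theta,\nu)$ supplied by the real interpolation method (which uses only the resolvents of the positive operator $A$), and two facts from the preliminary sections, valid since $w_\gamma\in A_p$ for $\gamma\in(-1,p-1)$: the norm equivalences $\|v\|_{F^\beta_{p,q}(\R,w_\gamma;E)}\simeq\|(2^{j\beta}\Delta_j v)_j\|_{L^p(w_\gamma;\ell^q(E))}$, $\|v\|_{B^\beta_{p,q}(\R,w_\gamma;E)}\simeq\|(2^{j\beta}\Delta_j v)_j\|_{\ell^q(L^p(w_\gamma;E))}$, and the weighted Bernstein--Nikol'skii inequality $\|v\|_{L^\infty(\R;E)}\lesssim 2^{j(1+\gamma)/p}\|v\|_{L^p(\R,w_\gamma;E)}$ for $E$-valued $v$ with $\supp\wh v\subseteq\{|\xi|\le 2^{j+1}\}$ --- the loss $\frac{1+\gamma}{p}$ in the latter being exactly the shift appearing in \eqref{intro-Ftrace}--\eqref{intro-Btrace}. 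The one further ingredient, which I expect to be decisive, is the microscopic mixed-derivative embedding established earlier in the paper: it trades temporal smoothness against the order of $A$ with no loss in integrability and only a controlled change of the fine indices, and it is what ultimately forces the fine index of the trace space to be the integrability $p$ (respectively $q$ in the $B$-scale) rather than the $q$ of the ambient $F$-spaces.

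For (i), take $u\in\mathcal{A}$ and write $u(0)=\sum_j\Delta_j u(0)$ in $X$; to see that $u(0)\in D_A(\sigma,p)$ one estimates the $K$-functional $K\big(2^{-k\alpha},u(0);X,D_A(\alpha,r)\big)$ by splitting the series at $j\sim k$: the part with $j>k$ is kept in $X$, where the Bernstein inequality converts its $F^{s+\alpha}_{p,q}(\R,w_\gamma;X)$-contribution into $2^{-(j-k)\sigma}$ times the building block $2^{j(s+\alpha)}\|\Delta_j u\|_{L^p(w_\gamma;X)}$, and the part with $j\le k$ is measured in $D_A(\alpha,r)$, where its $F^{s}_{p,q}(\R,w_\gamma;D_A(\alpha,r))$-contribution comes with a factor $2^{-(k-j)((1+\gamma)/p-s)}$. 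Since $\sigma>0$ \emph{and} $\tfrac{1+\gamma}{p}-s>0$ --- which is exactly where \emph{both} hypotheses enter --- these are discrete convolutions with summable kernels; performing the summation while integrating the weight $w_\gamma$ over the dyadic time intervals of length $2^{-j}$ (a Peetre maximal function taking care of the $L^p(\ell^q)$-structure) turns the $\ell^q$ of the $F$-norm into $\ell^p$-summability over $k$ --- the familiar mechanism behind $\tr F^{\beta}_{p,q}=B^{\beta-1/p}_{p,p}$ --- so $u(0)\in D_A(\sigma,p)$, while the same computation with the $\ell^q(L^p)$-ordering of the $B$-norm leaves $\ell^q$ over $k$ and yields $u(0)\in D_A(\sigma,q)$. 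For (ii) the natural candidate is the resolvent extension $(\ext x)(t):=\psi(t)(1+tA)^{-N}x$ with $N>\alpha$ and $\psi\in C^\infty_{\mathrm c}(\R)$, $\psi(0)=1$: then $\tr_0\ext x=x$, and the $J$-decomposition $x=\sum_k x_k$, the concentration of $s\mapsto(1+sA)^{-N}x_k$ near $s=0$ at temporal scale $2^{-k}$, and $\|x_k\|_{D_A(\alpha,r)}\lesssim 2^{k\alpha}\|x_k\|_X$ together with the same Bernstein/convolution bookkeeping give $\ext x\in\mathcal{A}$ when $q\ge p$; for $q<p$ this single resolvent cusp at $t=0$ is not summable enough in the $\ell^q$-slot and the extension must be smoothed out --- e.g.\ by averaging the resolvent family over scales, or by building $\ext x$ directly from a $K$-functional decomposition of $x$ and invoking the microscopic mixed-derivative embedding.

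The refinement of the extension operator for small $q$ is the step I expect to be the main obstacle, and more generally the delicate point throughout is that \emph{every} exponent in the problem is critical: the identities hold with $\sigma$ \emph{equal} to $s+\alpha-\frac{1+\gamma}{p}$, with no $\e$ to spare, so the temporal Littlewood--Paley scale and the spectral scale of $A$ have to be balanced against \emph{each other}, and no lossy Sobolev-type embedding can be used anywhere --- which is precisely why a microscopic, fixed-integrability improvement of the Sobolev and mixed-derivative embeddings has to be isolated first. A secondary but pervasive complication is that $A$ is only assumed \emph{positive}: there is no semigroup and no bounded imaginary powers, so $e^{-tA}$ and Bessel potentials of $A$ must everywhere be replaced by resolvent operators such as $(1+tA)^{-N}$, and one must work with the $K/J$-description of $D_A(\theta,\nu)$ in place of a spectral square function --- supplying exactly these substitutes is the role of the preliminary sections.
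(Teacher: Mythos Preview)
Your $B$-case argument is correct and in fact more direct than the paper's: the Bernstein estimate $\|\Delta_j u(0)\|_E\lesssim 2^{j(1+\gamma)/p}\|\Delta_j u\|_{L^p(w_\gamma;E)}$ together with the $K$-functional splitting at $j\sim k$ and Young's inequality in $\ell^q$ gives $\|u(0)\|_{D_A(\sigma,q)}\lesssim\|u\|_{B^{s+\alpha}_{p,q}(w_\gamma;X)}+\|u\|_{B^{s}_{p,q}(w_\gamma;D_A(\alpha,r))}$ immediately. The paper instead deduces \eqref{intro-Btrace} from \eqref{intro-Ftrace} by real interpolation, invoking Da Prato--Grisvard to interpolate the intersection spaces.

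For the $F$-case, however, there is a real gap. After Young's inequality your $K$-functional bound controls $\|u(0)\|_{D_A(\sigma,p)}$ only by $\|(2^{j(s+\alpha)}\|\Delta_j u\|_{L^p(w_\gamma;X)})_j\|_{\ell^p}+\|(2^{js}\|\Delta_j u\|_{L^p(w_\gamma;D_A(\alpha,r))})_j\|_{\ell^p}$, i.e.\ by the $B_{p,p}$-norms --- hence by the $F_{p,q}$-norms only when $q\le p$. The sentence ``a Peetre maximal function taking care of the $L^p(\ell^q)$-structure'' does not close this for $q>p$: in the classical identity $\tr F^{\beta}_{p,q}(\R^d)=B^{\beta-1/p}_{p,p}(\R^{d-1})$ the $q$-independence comes from a Fubini step in the transverse \emph{spatial} coordinate, and there is no such coordinate here --- the inner direction is the spectral scale of $A$.

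The deeper issue is your guiding principle that ``no lossy Sobolev-type embedding can be used anywhere''. The paper's mechanism for $q$-independence is precisely such an embedding: Theorem~\ref{thm:SobB} gives $F^{s}_{p,\infty}(\R,w_\gamma;E)\hookrightarrow F^{\tilde s}_{p,1}(\R,w_{\tilde\gamma};E)$ for $\tilde\gamma<\gamma$ with $s-\tfrac{1+\gamma}{p}=\tilde s-\tfrac{1+\tilde\gamma}{p}$. This \emph{is} lossy in $s$ and in $\gamma$ separately, but it preserves the critical combination $s-\tfrac{1+\gamma}{p}$ that determines the trace, so one may spend weight to buy the microscopic index and reduce to $q=1$ without touching $\sigma$. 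The continuity of $\tr_0$ is then proved only for $F_{p,1}$, by a Di~Blasio integral representation of $u(0)$, the Hardy--Young inequality, and the difference-norm characterization of $F^{s}_{p,1}$ --- no Littlewood--Paley splitting of $u(0)$ at all. The same device dissolves your worry about the extension for small $q$: one first shows $\ext_{j,A}x\in W^{k,p}(\R,w_{\tilde\gamma};\cdot)$ for a larger $\tilde\gamma\in(-1,p-1)$ by direct resolvent estimates, and then embeds via \eqref{eq:TriebelLizorkinW} and Theorem~\ref{thm:SobB} into $F^{\cdot}_{p,1}(\R,w_\gamma;\cdot)$, which sits inside every $F^{\cdot}_{p,q}$.
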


The striking point is the independence of the trace space of the so-called \emph{microscopic parameters} $q$ and $r$ in \eqref{intro-Ftrace}, and of $r$ in \eqref{intro-Btrace}. It implies that for a variety of intersection spaces the trace space is independent of the regularity scales. To be precise, assume $\mathcal A_p^{s+\alpha}$, $\mathcal B_p^{s}$ and $X_\alpha$ are such that
$$F_{p,1}^{s+\alpha} \hookrightarrow \mathcal A_p^{s+\alpha} \hookrightarrow F_{p,\infty}^{s+\alpha}, \qquad F_{p,1}^{s} \hookrightarrow \mathcal B_p^{s} \hookrightarrow F_{p,\infty}^{s}, \qquad D_A(\alpha,1) \hookrightarrow X_{\alpha}\hookrightarrow D_A(\alpha,\infty).$$
For instance, one can take a Bessel-potential space $H$, a Sobolev space $W$ or a Slobodetskii space $W^{s,p} = B_{p,p}^s = F_{p,p}^s$ for $\mathcal A$ or $\mathcal B$ (see \eqref{eq:TriebelLizorkinH}-\eqref{eq:monotony}), and a fractional power domain $D(A^\alpha)$ for $X_\alpha$. Then \eqref{intro-Ftrace} implies
$$\text{tr}_0 \left( \mathcal A_p^{s+\alpha}(\R,w_\gamma;X) \cap \mathcal B_p^{s}(\R,w_\gamma;X_{\alpha})\right)  = D_A\Big(s+\alpha - \frac{1+\gamma}{p},p\Big).$$
We emphasize that the result holds true for arbitrary Banach spaces $X$, where a Littlewood-Paley representation for Bessel-potential and Sobolev spaces is in general not available (see Remark \ref{HW}).

In the general case (see Proposition \ref{prop:extgeneral}), the continuous right-inverse for $\tr_0$ is essentially the resolvent of $A$ combined with an extension operator for the half-line $\R_+$. If $-A$ generates an analytic semigroup, then one may replace the resolvent by the semigroup, see Theorem \ref{thm:semigcase}. This also provides a new result on the regularity of semigroup orbits.

For the operator $A$ one can choose any fractional power of the shifted Laplacian $1-\Delta$ on  $$X\in \big \{H^{\beta,u}(\R^d), B_{u,v}^\beta(\R^d), F_{u,v}^\beta(\R^d)\,:\, \beta\in \R, \;u\in (1,\infty), \;v\in [1,\infty]\big \}.$$
We note that the result in particular covers the unweighted case, i.e., $\gamma = 0$. In the exponent range $\gamma\in (-1,p-1)$ the weight $w_\gamma$ belongs to the Muckenhoupt class $A_p$.\medskip

As mentioned before, the intersection spaces and their variants arise in the maximal $L^p$-$L^q$-regularity approach to deterministic and stochastic parabolic evolution equations,  see \cite{DHP07, DK14, DPZ08, DSS08, EPS03, MS12, NVWmax, PS04, Wei02, Zac03, Zac05}. In Section \ref{sec:Stefan} we apply Theorem \ref{thm:main3} to determine the temporal trace space of \eqref{intro-7} for all values of $p,q\in (1,\infty)$. Combining this with the results of \cite{DK14} for trivial initial values, we can prove maximal $L^p$-$L^q$-regularity in the parameter range $\frac{2p}{p+1} < q < 2p$ for the fully inhomogeneous linearized two-phase Stefan problem with Gibbs-Thomson correction.  The case $p = q$ was considered in \cite{DSS08} for the one phase problem, and in \cite{EPS03} for the two-phase problem.

As will be explained in Remark \ref{remark-DPZ}, Theorem \ref{thm:main3} allows to determine the temporal trace in the (weighted) $L^p$-$L^q$-approach to general parabolic initial boundary value problems, as in \cite{DPZ08}.

\medskip

Theorem \ref{thm:main3} generalizes and unifies \cite[Theorem 4.2]{MS11} in the weighted case, and  \cite[Theorem 4.5]{DSS08} and \cite[Theorem 3.6]{Zac05} (see also \cite[Theorem 3.1.4]{Zac03}) in the unweighted case. In these works only the $H$- and $B$-spaces were considered.  The continuity of $\tr_0$ was proved in  \cite[Theorem 3.6]{Zac05} under the assumption that $X$ is a UMD Banach space and that $A$ is $\mathcal R$-sectorial with $\mathcal R$-angle not larger than $\frac{\pi}{\alpha}$. The reason for these stronger assumptions is that the proof in \cite{Zac05} relies on the operator-valued Fourier multiplier result due to \cite{We}. Moreover, the proof uses a result on complex interpolation of $H$-spaces with Dirichlet boundary conditions from \cite{Se}, which is not trivial to extend to weighted and the vector-valued case.

The traces of anisotropic Besov and Triebel-Lizorkin spaces, not necessarily of intersection type, are studied in \cite{Ama09, Ber84, JS08}. However, there are only partial results on how the spaces considered there are related to the intersection spaces when $X$ and $D(A)$ are function spaces over $\R^d$ as above, see \cite[Sections 3.6-3.8]{Ama09}, \cite[Section 3.2]{DK14} and \cite[Section 5]{JS08}. In particular, the case of an operator with boundary conditions on a domain is not included there.\medskip

Theorem \ref{thm:main3} will be proved in Section \ref{sec:tracesintersections}. By direct arguments, which involve a Hardy type inequality and a difference norm for $F$-spaces, we first prove \eqref{intro-Ftrace} in a special case, say $q = 1$ and $D(A^\alpha)$ instead of $D_A(\alpha,r)$. Let us describe in more detail  where now the independence of the trace space of the microscopic parameters in \eqref{intro-Ftrace} comes from. For the outer $F$-spaces, this is essentially a consequence of the Sobolev embeddings from \cite{MeyVer1} for weighted $F$-spaces with fixed integrability parameter $p$. The embeddings state that
$$F_{p,q_0}^{s_0}(w_{\gamma_0}) \hookrightarrow F_{p,q_1}^{s_1}(w_{\gamma_1}), \qquad s_0 - \frac{1+\gamma_0}{p} = s_1 - \frac{1+\gamma_1}{p}, \qquad \gamma_0 > \gamma_1  > -1,$$
for \emph{arbitrary} parameters $q_0,q_1\in [1,\infty]$. Observe that the Sobolev regularity $s_0 - \frac{1+\gamma_0}{p}$ equals the smoothness of the trace. Since the embedding is independent of $q$, also the trace space of all intersection spaces with identical Sobolev regularity in the single spaces is independent of it. Of course, a nontrivial Sobolev embedding with fixed integrability $p$ is only possible in the presence of weights. So the flexibility in the weight exponent is the main technical tool for the independence of the outer microscopic parameter, even for the result in the unweighted case $\gamma = 0$. It is well-known that the trace space of an isotropic Triebel-Lizorkin space $F_{p,q}^s(\R^d)$ is independent of $q\in [1,\infty]$, see \cite{Tri83}. The argument with weighted Sobolev embeddings provides a new proof for this fact.

The independence of the parameter $r$ in the real interpolation space $D_A(\alpha,r)$ follows from a mixed derivative embedding with microscopic improvement in the inner scale, which we prove in Theorem \ref{thm:newton}. As explained in Remark \ref{rem:mixed-der}(i), it in particular states that
$$F_{p,q}^{s+\alpha}(\R,w_\gamma;X)\cap F_{p,q}^{s}(\R,w_\gamma;D_A(\alpha,\infty)) \hookrightarrow F_{p,q}^{s+(1-\sigma)\alpha}(\R,w_\gamma;D_A(\sigma \alpha,1)), \qquad \sigma\in (0,1).$$
Observe that the space $F_{p,q}^{s+(1-\sigma)\alpha}$ is of class $J(1-\sigma)$ between $F_{p,q}^{s}$ and $F_{p,q}^{s+\alpha}$, and that $D_A(\sigma \alpha,1)$ is of class $J(\sigma)$ between $X$ and $D_A(\alpha,\infty)$  (see \cite[Section 1.10]{Tr1} for the terminology). In this sense the embedding says that for an intersection space one may transfer smoothness from the outer scale to the inner scale. The smoothness transfer yields a microscopic improvement in the inner scale, in the sense that $D_A(\sigma \alpha,1)$ is the smallest space of class  $J(\sigma)$ between $X$ and $D_A(\alpha,\infty)$. Now, having proved the trace result for \emph{some} intermediate space, e.g., the fractional power domain $D(A^\alpha)$, the mixed derivative embedding yields the result for \emph{any} intermediate space.

For $B$- and $H$-spaces, embeddings of this type are well-known and widely used in the context of boundary value problems with inhomogeneous symbols, see \cite{DK14, DPZ08, DSS08, EPS03,  MS11, MS12}.  For the mixed derivative embeddings, a  microscopic improvement in the outer scale does not hold, as we shall show in Proposition \ref{prop:outerscale} by means of a counterexample.

The trace characterization \eqref{intro-Btrace} will be deduced in Proposition \ref{prop:B-case-inter} from \eqref{intro-Ftrace} by real interpolation. To interpolate the intersection spaces we argue as in \cite{MS11} in an operator theoretic way, relying on a classical result on operator sums due to Da Prato and Grisvard.
\medskip

In Theorem \ref{thm:main3}, for the continuity of the trace the restriction $\gamma <p-1$ can be omitted, as Theorem \ref{thm:F-embed} shows. We do not know how to remove the restriction for the surjectivity. An extension of Theorem \ref{thm:main3} to the case where $D_A(\alpha,r)$ is replaced by $D_A(\beta,r)$ with $\beta>0$ depending on the spectral angle of $A$ is given in Corollary \ref{cor:main}. We finally mention that it should be possible to generalize Theorem \ref{thm:main3} to dimensions $d\geq 2$. Given the situation of the theorem and with $w_{\gamma}(x',t) = |t|^{\gamma}$, using a Fubini argument and the methods of Section \ref{sec:tracesintersections} one can see that the trace $\tr$ at the hyperplane $\{(x',0):x'\in \R^{d-1}\}$ maps
$$F^{s+\alpha}_{p,q}(\R^d,w_{\gamma};X) \cap F^{s}_{p,q}(\R^d,w_{\gamma};D_A(\alpha,r))$$
continuously into
$$B_{p,p}^{s+\alpha- \frac{1+\gamma}{p}}(\R^{d-1};X) \cap L^p(\R^{d-1};D_A(s+\alpha - \tfrac{1+\gamma}{p},p)).$$
 We expect that this is indeed the trace space. \medskip

\emph{Notations.} In the rest of the paper $X$ will denote a general complex Banach space. No further geometric properties of $X$ are assumed. The space of bounded linear operators on $X$ is denoted by $\calL(X)$.  We let $\Schw(\R^d;X)$ be the Schwartz class of $X$-valued, smooth rapidly decreasing functions on $\R^d$, and let $\TD(\R^d;X) = \calL(\Schw(\R^d);X)$ be the space of $X$-valued tempered distributions. The Fourier transform of a distribution $f$ is denoted by $\F f$ or $\widehat{f}$. We write $X_0 \hookrightarrow X_1$ if a Banach space $X_0$ embeds continuously into another Banach space $X_1$. We let  $\N = \{1, 2, 3, \ldots\}$ and $\N_0 = \N\cup \{0\}$. Generic positive constants are denoted by $C$.

\medskip

{\em Acknowledgments.} The authors thank the anonymous referees for  helpful suggestions which lead to improvements of the results and the presentation of the paper.

\section{Preliminaries\label{sec:prel}}
\subsection{Weighted function spaces}\label{prelim-1}
We briefly recall the definitions and basic properties of vector-valued function spaces with weights from Muckenhoupt's $A_\infty$-class. For details and more references we refer to \cite[Sections 2 and 3]{MeyVer1}.

Let $p\in (1,\infty)$ and let $w:\R^d\to [0,\infty)$ be a weight, i.e., a locally integrable function which is nonzero almost everywhere. The norm of $L^p(\R^d,w;X)$ is given by
$$\|f\|_{L^p(\R^d,w;X)} = \left ( \int_{\R^d} \|f(x)\|_X^p w(x) \, dx\right)^{1/p}.$$
We also write $L^p(\R^d,w) = L^p(\R^d,w;\C)$, and $L^p(\R^d;X)$ in the unweighted case. The weight $w$ is said to belong to the Muckenhoupt class $A_p$ if
$$\sup_{Q\text{ cubes in }\R^d} \left( \frac{1}{|Q|} \int_Q w(x)\,dx\right)\left ( \frac{1}{|Q|} \int_Q w(x)^{-1/(p-1)}\,dx\right)^{p-1} < \infty.$$
One further sets $A_\infty = \bigcup_{p> 1} A_p$. For the general properties of the $A_p$-classes we refer to \cite[Chapter 9]{GraModern} and \cite[Chapter V]{Stein93}.

As a special case, in the present work we mainly consider power weights $$w_\gamma(t) = |t|^\gamma, \qquad t\in \R,$$ on the real line. Here we have $w_\gamma \in A_p$ if and only if $\gamma \in (-1,p-1)$, and thus $w_\gamma \in A_\infty$ if and only if $\gamma>-1$ (see \cite[Example 9.1.7]{GraModern}).

Let $\Phi(\R^d)$ be the set of all sequences $(\varphi_k)_{k\geq 0} \subset \Schw(\R^d)$ such that
\begin{align}\label{eq:defPhisequence}
\wh{\varphi}_0 = \wh{\varphi}, \qquad \wh{\varphi}_1(\xi) = \wh{\varphi}(\xi/2) - \wh{\varphi}(\xi), \qquad \wh{\varphi}_k(\xi) = \wh{\varphi}_1(2^{-k+1} \xi), \quad k\geq 2, \qquad \xi\in \R^d,
\end{align}
where the Fourier transform $\wh{\varphi}$ of the generating function $\varphi\in \Schw(\R^d)$ satisfies
\begin{equation}\label{gen-func}
 0\leq \wh{\varphi}(\xi)\leq 1, \quad  \xi\in \R^d, \qquad  \wh{\varphi}(\xi) = 1 \ \text{ if } \ |\xi|\leq 1, \qquad  \wh{\varphi}(\xi)=0 \ \text{ if } \ |\xi|\geq \frac32.
\end{equation}
For  $(\varphi_k)_{k\geq 0} \in \Phi(\R^d)$ and $f\in \TD(\R^d;X)$ we let
$$S_k f = \varphi_k * f = \F^{-1} ( \wh{\varphi}_k \wh{f}).$$
Given $p \in (1,\infty)$, $q\in [1,\infty]$ and $w\in A_\infty$,  for $f\in {\mathscr S}'(\R^d;X)$ we set
\[ \|f\|_{B_{p,q}^s (\R^d,w;X)} = \Big\| \big( 2^{sk}S_k f\big)_{k\geq 0} \Big\|_{\ell^q(L^p(\R^d,w;X))},\]
\[ \|f\|_{F_{p,q}^s (\R^d,w;X)} = \Big\| \big( 2^{sk}S_k f\big)_{k\ge 0} \Big\|_{L^p(\R^d,w;\ell^q(X))}, \]
\[\|f\|_{H^{s,p}(\R^d,w;X)} = \|\F^{-1} [(1+|\cdot|^2)^{s/2} \wh{f} ]\|_{L^p(\R^d,w;X)}.\]
These norms define the Besov space $B_{p,q}^s (\R^d,w;X)$, the Triebel-Lizorkin space $F_{p,q}^s (\R^d,w;X)$, and the Bessel-potential space $H^{s,p}(\R^d,w;X)$, respectively, which are all Banach spaces. Any other $(\psi_k)_{k\geq 0} \in \Phi(\R^d)$ leads to an equivalent norm on the $B$- and $F$-spaces. Observe that $B_{p,p}^s = F_{p,p}^s$ by Fubini's theorem. If $s >0$ with $s\notin \N$, then one sets
$$W^{s,p}(\R^d,w;X) = B_{p,p}^s (\R^d,w;X),$$
and for $m\in \N_0$,
\[\|f\|_{W^{m,p}(\R^d,w;X)} = \sum_{|\alpha|\leq m} \|D^{\alpha} f\|_{L^p(\R^d,w;X)},\]
where the derivatives $D^{\alpha}$ are taken in a distributional sense. These norms define the Slobodetskii and the Sobolev spaces, respectively.

\begin{remark}\label{HW}
Note that $L^p(\R^d,w;X) = H^{0,p}(\R^d,w;X) = W^{0,p}(\R^d,w;X)$. However, one has $H^{1,p}(\R^d;X) = W^{1,p}(\R^d;X)$ if and only if $X$ has the UMD property (see \cite{Ama95,McC84,Zim89}). Moreover, $L^p(\R^d;X) = F_{p,2}^0(\R^d;X)$ if and only if $X$ can be renormed as a Hilbert space (see \cite{HaMe96} and \cite[Remark 7]{SchmSiunpublished}).
\end{remark}

\subsection{Embeddings} \label{sec:embeddings} Each of the above spaces embeds continuously into $\TD(\R^d;X)$. This can be seen as in the proof of \cite[Theorem 2.3.3]{Tri83} (using H\"older's inequality instead of Nikolskii's inequality and \cite[Lemma 4.5]{MeyVer1} to get rid of the weight). Conversely, $\Schw(\R^d;X)$ embeds continuously into each of the above spaces, where this is a dense embedding if $q\in [1,\infty)$ (see \cite[Lemma 3.8]{MeyVer1}).

There are elementary embeddings between the function spaces, see \cite[Propositions 3.11 and 3.12]{MeyVer1}. For $s\in \R$ and $m\in \N_0$ we shall make particular use of
\begin{align}\label{eq:TriebelLizorkinH}
F^{s}_{p,1}(\R^d,w;X)  \hookrightarrow H^{s,p}(\R^d,w;X) \hookrightarrow F^{s}_{p,\infty}(\R^d,w;X),\\
\label{eq:TriebelLizorkinW}
F^{m}_{p,1}(\R^d,w;X)  \hookrightarrow W^{m,p}(\R^d,w;X) \hookrightarrow F^{m}_{p,\infty}(\R^d,w;X).
\end{align}
Here, the embeddings of $F^{s}_{p,1}(\R^d,w;X)$ and $F^{m}_{p,1}(\R^d,w;X)$ are valid for all $w\in A_\infty$. The embeddings into $F^{s}_{p,\infty}$ and $F^{m}_{p,\infty}$ are valid for $w\in A_p$, and in fact a local $A_p$-condition is necessary for them to hold (see \cite[Remark 3.13]{MeyVer1}). For power weights $w_\gamma(t) = |t|^\gamma$ on $\R$ this condition is equivalent to the usual $A_p$-condition.

For all $q\in [1, \infty]$ one further has
\begin{equation}\label{eq:scaleBF}
B_{p,\min\{p,q\}}^{s} (\R^d,w;X)\hookrightarrow F_{p,q}^{s} (\R^d,w;X) \hookrightarrow B_{p,\max\{p,q\}}^{s} (\R^d,w;X),
\end{equation}
and if $1\leq q_0\leq q_1\leq \infty$, then
\begin{equation}\label{eq:monotony}
 B_{p,q_0}^s (\R^d,w;X)\hookrightarrow B_{p,q_1}^s (\R^d,w;X), \qquad F_{p,q_0}^s (\R^d,w;X)\hookrightarrow F_{p,q_1}^s (\R^d,w;X),
\end{equation}
by monotonicity of the $\ell^q$-spaces. Hence $B_{p,1}^s$ is the smallest and $B_{p,\infty}^s$ is the largest of the spaces $F$, $B$, $H$ and $W$ for fixed $s$ and $p$.

Crucial for our investigations of traces are the following Sobolev type embeddings for Triebel-Lizorkin spaces one the real line with power weights $w_\gamma(t) = |t|^\gamma$, which are a special case of \cite[Theorem 1.2]{MeyVer1}.

\begin{theorem}\label{thm:SobB} Let $1<p_0\leq p_1 < \infty$, $q_0,q_1\in [1,\infty]$, $s_0 > s_1$ and $\gamma_0,\gamma_1>-1$. Suppose that
\begin{equation*}
\frac{\gamma_0}{p_0} \geq \frac{\gamma_1}{p_1}, \qquad \text{and} \qquad s_0 -\frac{1+\gamma_0}{p_0} =  s_1 - \frac{1+\gamma_1}{p_1}.
\end{equation*}
Then one has the continuous embedding
\begin{equation*}
F^{s_0}_{p_0, q_0}(\R,w_{\gamma_0}; X)\hookrightarrow F^{s_1}_{p_1, q_1}(\R,w_{\gamma_1}; X).
\end{equation*}
\end{theorem}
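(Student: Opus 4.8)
The plan is to reduce the embedding, via the elementary monotonicity embeddings in \eqref{eq:monotony}, to the extreme case of the microscopic parameters, and then to combine a weighted Plancherel--P\'olya type inequality for band-limited functions with a summation argument of Jawerth--Franke type. Since $F^{s_0}_{p_0,q_0}(\R,w_{\gamma_0};X)\hookrightarrow F^{s_0}_{p_0,\infty}(\R,w_{\gamma_0};X)$ and $F^{s_1}_{p_1,1}(\R,w_{\gamma_1};X)\hookrightarrow F^{s_1}_{p_1,q_1}(\R,w_{\gamma_1};X)$ for all $q_0,q_1\in[1,\infty]$, it suffices to prove
\[
 F^{s_0}_{p_0,\infty}(\R,w_{\gamma_0};X)\hookrightarrow F^{s_1}_{p_1,1}(\R,w_{\gamma_1};X).
\]
Put $\sigma:=s_0-s_1=\frac{1+\gamma_0}{p_0}-\frac{1+\gamma_1}{p_1}$, which is $>0$ by $s_0>s_1$. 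Fixing $(\varphi_k)_{k\ge0}\in\Phi(\R)$, the key per-scale ingredient is
\[
 \|g\|_{L^{p_1}(\R,w_{\gamma_1};X)}\le C\,R^{\sigma}\,\|g\|_{L^{p_0}(\R,w_{\gamma_0};X)},\qquad \supp\widehat g\subseteq\{|\xi|\le R\},
\]
with $C$ independent of $R>0$ and of $g\in\TD(\R;X)$. Applied to $g=S_kf$ (whose spectrum lies in $\{|\xi|\le 2^{k+1}\}$) this gives $2^{s_1k}\|S_kf\|_{L^{p_1}(\R,w_{\gamma_1};X)}\le C\,2^{s_0k}\|S_kf\|_{L^{p_0}(\R,w_{\gamma_0};X)}$ for every $k\ge0$.

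For the per-scale inequality I would first normalise $R=1$ by the dilation $x\mapsto R^{-1}x$: this moves the spectral ball to the unit ball, and since the power weights $|x|^{\gamma_i}$ are homogeneous the estimate is scale invariant precisely because of the balance $\sigma=\frac{1+\gamma_0}{p_0}-\frac{1+\gamma_1}{p_1}$. For $R=1$, pick $\psi\in\Schw(\R)$ with $\widehat\psi\equiv1$ on $\{|\xi|\le1\}$, so that $g=\psi*g$ and hence $\|g(x)\|_X\le(|\psi|*\|g(\cdot)\|_X)(x)$ pointwise; the vector structure then plays no further role. What remains is a scalar power-weighted bound for convolution with the rapidly decreasing kernel $\psi$: away from the origin the weights are locally comparable to constants and the estimate follows from Young's inequality together with $p_0\le p_1$ and $\gamma_0/p_0\ge\gamma_1/p_1$ (in the spirit of the Stein--Weiss weighted convolution inequalities), while near the origin, where the power weights degenerate, one has to use that $g$ is band-limited: by Bernstein's inequality the mass of $g$ cannot concentrate on the region where $w_{\gamma_i}(x)=|x|^{\gamma_i}$ is small (or blows up), which together with the hypotheses $\gamma_i>-1$ yields the local bound.

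Were the outer spaces Besov spaces, the per-scale estimate plus taking $\ell^1(\N_0)$-norms would already finish the proof; for Triebel--Lizorkin spaces this naive summation fails, since $\sum_{k\ge0}2^{s_0k}\|S_kf\|_{L^{p_0}(\R,w_{\gamma_0};X)}$ is not controlled by $\|f\|_{F^{s_0}_{p_0,\infty}(\R,w_{\gamma_0};X)}=\big\|\sup_{k\ge0}2^{s_0k}\|S_kf\|_X\big\|_{L^{p_0}(\R,w_{\gamma_0})}$. One therefore has to exploit the gain $p_0\le p_1$ more efficiently, via the Jawerth--Franke mechanism: passing to the weighted, $X$-valued sequence-space model $f^{s}_{p,q}$ of $F^{s}_{p,q}$ furnished by a (weighted) wavelet or atomic decomposition, one uses that within a single dyadic resolution level the cubes $Q_{j,m}$ are disjoint, so the inner $\ell^q$-index is irrelevant there, and the per-scale gain $2^{k\sigma}$ can be ``spent'' on improving the microscopic index from $\infty$ on the source space to $1$ on the target; the contributions of the different levels are then summed using the strict inequality $s_0>s_1$, which furnishes the geometric decay, together with a weighted vector-valued maximal inequality (valid since $w_{\gamma_i}\in A_\infty$ once an auxiliary exponent $t<1$ is inserted). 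I expect this summation step to be the main obstacle: because the power weights $|x|^\gamma$ are not translation invariant one cannot reduce to the unweighted Jawerth--Franke embedding, so the argument has to be run with $A_\infty$-weights and the weighted maximal operator throughout, with particular care near the origin. Finally, since Theorem~\ref{thm:SobB} is exactly the one-dimensional instance of \cite[Theorem~1.2]{MeyVer1}, an alternative is simply to invoke that result; the above describes the structure of its proof.
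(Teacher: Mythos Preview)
The paper does not prove this theorem; it simply records it as a special case of \cite[Theorem~1.2]{MeyVer1}. Your final sentence does exactly that, so in that narrow sense you match the paper.

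The sketch you give of the underlying argument is a fair outline of the strategy in \cite{MeyVer1}: reduce to the endpoint microscopic parameters $q_0=\infty$, $q_1=1$ via \eqref{eq:monotony}, establish a per-scale weighted Plancherel--P\'olya/Nikolskii inequality for band-limited functions (this is essentially \cite[Proposition~4.1]{MeyVer1}, and your scaling/convolution heuristic is the right picture), and then run a Jawerth--Franke type summation. Two remarks on where your sketch is loose. First, the actual argument in \cite{MeyVer1} does not pass through an atomic or wavelet sequence-space model; it works directly with the Littlewood--Paley pieces and level sets of the maximal function, closer to Jawerth's original proof. Second, and more importantly, the case that is used throughout the present paper is $p_0=p_1$ with $\gamma_0>\gamma_1$: here there is no gain in integrability, so the classical Jawerth--Franke mechanism you describe (which ``spends'' the gap $p_0<p_1$) does not apply as stated. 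The improvement from $q_0=\infty$ to $q_1=1$ must come entirely from the weight, and the argument exploits the specific structure of $|t|^\gamma$ near the origin rather than a generic $A_\infty$ maximal inequality. Your own caveat that this summation step is the main obstacle is well placed; as written, your sketch does not yet indicate how to close it in the equal-integrability case.
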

It is rather surprising that one can take $p_0 = p_1$ in the above result and still have the independence of the microscopic parameters $q_0$ and $q_1$.

\subsection{Characterization of weighted $F$-spaces by differences\label{sec:charact}} For an integer $m\geq 1$ let
\[\Delta_h^m f(x) = \sum_{l =0}^m {{m}\choose{l }} (-1)^l f(x+(m-l)h), \qquad x,h\in \R^d.\]
For a weight $w$ and $f\in L^p(\R^d,w;X)$ define
\[[f]_{F^s_{p,q}(\R^d,w;X)}^{(m)} = \Big\|\Big(\int_0^\infty t^{-sq}  \Big(t^{-d}\int_{|h|\leq t} \|\Delta_h^m f(\cdot)\|_X \, dh \Big)^{q}\, \frac{dt}{t}\Big)^{1/q}\Big\|_{L^p(\R^d,w)},\]
with the usual modification if $q=\infty$, and
\[\nn f\nn_{F^s_{p,q}(\R^d,w;X)}^{(m)} = \|f\|_{L^p(\R^d,w;X)} + [f]_{F^s_{p,q}(\R^d,w;X)}^{(m)}.\]
We also write $\nn f\nn_{F^s_{p,q}(\R^d,w;X)}$ for $\nn f\nn_{F^s_{p,q}(\R^d,w;X)}^{(m)}$ if there is no danger of confusion. Note that if $q=1$, then Fubini's theorem yields
\begin{equation}\label{eq:Fnormq=1}
 [f]_{F^s_{p,1}(\R^d,w;X)}^{(m)} = c_d \Big\|\int_{\R^d} |h|^{-s-d} \|\Delta_h^m f(\cdot)\|_X \, dh \Big\|_{L^p(\R^d,w)}.
\end{equation}

One can extend a well-known result on the equivalence of norms for $F$-spaces to the weighted case (cf. \cite[Proposition 6]{SchmSiunpublished}, \cite[Section 2.5.10]{Tri83} and \cite[Theorem 6.9]{Triebel3}). A similar characterization is valid for $B$-spaces.
\begin{proposition}\label{prop:Lpsmoothness}
Let $s>0$, $p\in (1, \infty)$, $q\in [1, \infty]$ and $w\in A_p$. Let $m\geq 1$ be an integer such that $m>s$.
Then there is a constant $C>0$ such that for all $f\in L^p(\R^d,w;X)$
\begin{equation}\label{eq:equivnorm}
C^{-1} \|f\|_{F^s_{p,q}(\R^d,w;X)}\leq \nn f\nn_{F^s_{p,q}(\R^d,w;X)}^{(m)} \leq C\|f\|_{F^s_{p,q}(\R^d,w;X)},
\end{equation}
whenever one of these expressions is finite.
\end{proposition}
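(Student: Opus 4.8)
Here is how I would approach this.

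\medskip

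The plan is to adapt the classical proof of the ball‑means‑of‑differences characterisation of Triebel–Lizorkin spaces (\cite[Section 2.5.10]{Tri83}, \cite[Proposition 6]{SchmSiunpublished}) to the weighted, $X$‑valued setting. The only genuinely new input is that the scalar Hardy–Littlewood maximal operator $M$ is bounded on $L^p(\R^d,w)$ and its $\ell^q$‑valued (Fefferman–Stein) version on $L^p(\R^d,w;\ell^q)$ for $1<p<\infty$, $1<q\le\infty$ and $w\in A_p$, together with the ensuing weighted Peetre maximal estimate
\[\|(2^{ks}S^{*}_kf)_{k\ge0}\|_{L^p(\R^d,w;\ell^q)}\le C\|f\|_{F^s_{p,q}(\R^d,w;X)}\qquad(q\in[1,\infty]),\]
where $S^{*}_kf(x)=\sup_{z\in\R^d}\|S_kf(x-z)\|_X(1+2^k|z|)^{-a}$ with $a$ large; see \cite{MeyVer1}. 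That these hold for arbitrary Banach $X$ and all $q\ge1$ rests on the \emph{small power trick}: $(S^{*}_kf)^r\le C\,M(\|S_kf\|_X^r)$ once $ar>d$, so one applies Fefferman–Stein in $L^{p/r}(\R^d,w;\ell^{q/r})$ with $r<1$ small, using $q/r>1$ and $w\in A_p\subset A_{p/r}$.

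\medskip

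For the upper estimate $\nn f\nn^{(m)}_{F^s_{p,q}}\le C\|f\|_{F^s_{p,q}}$, one first notes the pointwise bound $\|f(x)\|_X\le\sum_{k\ge0}2^{-ks}\bigl(2^{ks}\|S_kf(x)\|_X\bigr)$, so H\"older's inequality in $\ell^q$ and $s>0$ give $\|f\|_{L^p(\R^d,w;X)}\le C\|f\|_{F^s_{p,q}}$. For the seminorm one writes $\Delta^m_hf=\sum_k\Delta^m_hS_kf$ and, for each $k$ and $t$, estimates the ball mean $t^{-d}\int_{|h|\le t}\|\Delta^m_hS_kf(x)\|_X\,dh$ by splitting the $h$‑integral at $|h|=\min(t,2^{-k})$: on $|h|\le2^{-k}$ one iterates the fundamental theorem of calculus and Bernstein's inequality to gain the factor $(2^k|h|)^m$ against $S^{*}_kf$, while on $2^{-k}<|h|\le t$ one estimates $\Delta^m_h$ by its translates and recognises a Hardy–Littlewood average; altogether this ball mean is $\le C\min\{(2^kt)^m,1\}\,g_k(x)$ with $g_k:=S^{*}_kf+M(\|S_kf\|_X)$. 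Summing over $k$, the inner expression of $[f]^{(m)}_{F^s_{p,q}}$ is $\le C\sum_k\min\{(2^kt)^m,1\}g_k(x)$; discretising $t$ dyadically turns this into a discrete convolution in the scale index whose kernel $n\mapsto\min\{2^{nm},1\}$ is summable against $2^{ns}$ \emph{precisely because $0<s<m$}, and Young's inequality then reduces everything to $\|(2^{ks}g_k)_k\|_{L^p(\R^d,w;\ell^q)}\le C\|f\|_{F^s_{p,q}}$, which is exactly the maximal machinery above. The endpoint $q=1$ requires a separate argument, based on the integral representation \eqref{eq:Fnormq=1}.

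\medskip

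For the lower estimate $\|f\|_{F^s_{p,q}}\le C\,\nn f\nn^{(m)}_{F^s_{p,q}}$ one needs a reconstruction identity recovering each block $S_kf$ from differences of $f$. Partitioning the dyadic annulus $\{|\xi|\sim2^k\}$ into finitely many conical sectors, on each of which some unit vector $\omega$ satisfies $|\omega\cdot\xi|\sim|\xi|$, one divides the associated Fourier cut‑off by $(e^{ic2^{-k}\omega\cdot\xi}-1)^m$ — smooth and nonvanishing there for $c$ small enough, even though $(e^{i\zeta}-1)^m$ has zeros — and averages $c$ over a short interval; this represents $S_kf$ as a finite sum of convolutions of $L^1$‑normalised Schwartz kernels at scale $2^{-k}$ with means of $\Delta^m_hf$ over $h$ in a ball of radius comparable to $2^{-k}$. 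Consequently $2^{ks}\|S_kf(x)\|_X\le C\,M(e_k)(x)$ with $e_k(x):=2^{ks}\cdot2^{kd}\int_{|h|\le C2^{-k}}\|\Delta^m_hf(x)\|_X\,dh$; a direct comparison of dyadic and continuous ball means (the map $t\mapsto\int_{|h|\le t}\|\Delta^m_hf(x)\|_X\,dh$ being nondecreasing, so the value at $C2^{-k}$ is controlled by the average over $t\in[C2^{-k},2C2^{-k}]$ of the integrand of $[f]^{(m)}_{F^s_{p,q}}$) gives $\|(e_k)_{k\ge1}\|_{L^p(\R^d,w;\ell^q)}\le C[f]^{(m)}_{F^s_{p,q}}$. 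Together with $\|S_0f\|_{L^p(\R^d,w;X)}\le C\|f\|_{L^p(\R^d,w;X)}$ and one last application of Fefferman–Stein to $(M(e_k))_k$ this proves the bound for $q\in(1,\infty]$; for $q=1$, where the $\ell^1$‑valued Fefferman–Stein inequality fails, one again works from \eqref{eq:Fnormq=1}, exploiting that the truncated Riesz kernel $|h|^{-s-d}\one_{\{|h|>2^{-k}\}}$ has $L^1$‑norm comparable to $2^{ks}$. The ``whenever finite'' clause is automatic, since both $F^s_{p,q}(\R^d,w;X)$ and $L^p(\R^d,w;X)$ embed continuously into $\TD(\R^d;X)$, so the $S_kf$ are always well defined.

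\medskip

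The hardest part is the reverse inequality: setting up the reconstruction so that $S_kf$ genuinely becomes a \emph{bounded} superposition of \emph{averaged} $m$‑th differences at the correct dyadic scale — differences in a single direction do not suffice, since $\|\Delta^m_hf(x)\|_X$ need not be comparable to a local mean of differences — and disposing of the endpoint $q=1$, for which the $\ell^1$‑valued Fefferman–Stein inequality is false and one must argue directly with \eqref{eq:Fnormq=1}. The weighted, arbitrary‑Banach‑space form of the maximal inequalities is secondary and essentially routine, being supplied by the small power trick, which is also why no geometry of $X$ is needed.
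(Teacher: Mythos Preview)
The paper itself gives no proof of this proposition; it only remarks that the classical argument from \cite[Section~2.5.10]{Tri83}, \cite[Theorem~6.9]{Triebel3} and \cite[Proposition~6]{SchmSiunpublished} carries over to the weighted $X$-valued setting. Your sketch is precisely such an extension and is correct in substance: the weighted Fefferman--Stein inequality together with the small-power trick for the Peetre maximal function is indeed the only new input needed, and you rightly flag $q=1$ as an endpoint where the $\ell^1$-valued maximal inequality fails and a direct argument via \eqref{eq:Fnormq=1} is required (a clean route there is duality, using that $w\in A_p$ gives $w^{1-p'}\in A_{p'}$, so $M$ acts boundedly on $L^{p'}(\R^d,w^{1-p'})$ and absorbs the maximal function on the dual side). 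One small imprecision in your reconstruction step: for a fixed sector direction $\omega$, averaging the parameter $c$ over an interval produces a mean of $\Delta_h^m f$ over a \emph{line segment} in direction $\omega$, not over a ball, so the passage to $M(e_k)$ with $e_k$ the genuine ball mean is not immediate from what you wrote; one either averages over directions within each sector as well, or follows the local-means route of \cite[Theorem~6.9]{Triebel3}, which avoids the issue. This is a technical wrinkle rather than a gap in strategy.
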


\subsection{Positive operators and interpolation} \label{Sec-interp}
We recall some standard definitions and results on positive operators. For detailed expositions we refer to \cite{Ama95, DHP, Haase:2, Lun09, Tr1}.

Let $A$ with domain $D(A)$ be a closed and densely defined operator on $X$. Then $A$ is called a \emph{positive operator} if $[0,\infty)$ is contained in the resolvent set of $-A$ and
$$ \| (\lambda + A)^{-1}\|_{\calL(X)}\leq  \frac{C}{1+\lambda}, \qquad \lambda\geq 0.$$
For $\alpha \in \R$ the \emph{fractional power} $A^\alpha$ of a positive operator $A$ can be defined as in \cite[Section 1.15]{Tr1}.

For $\theta \in (0,1)$ and $p\in [1,\infty]$ the real and the complex interpolation functor are denoted by $(\cdot, \cdot)_{\theta,p}$ and $[\cdot, \cdot]_\theta$, respectively. For a positive operator $A$ and $\alpha > 0$ one sets
$$D_A(\alpha,p) = (X, D(A^m))_{\alpha/m,p},$$
where $m > \alpha$ is an arbitrary integer. It follows from reiteration (see \cite[Theorem 1.15.2]{Tr1}) that $D_A(\alpha,p)$ is independent of the choice of $m$, and further that
\begin{equation}\label{reiteration}
 (X, D_A(\alpha,p))_{\theta,q} = D_A(\theta\alpha,q), \qquad q\in [1,\infty],
\end{equation}
see \cite[Section 1.3]{Lun09}.   By \cite[Theorem 1.14.3]{Tr1}, an equivalent norm for $D_A(\alpha,p)$ is given by
\begin{equation}\label{eq:realinterResolvent}
y \mapsto \Big(\int_0^\infty \sigma^{\alpha p} \|( A(\sigma+A)^{-1})^{m} y\|^p \, \frac{d\sigma}{\sigma}\Big)^{1/p},
\end{equation}
with the usual modification for $p=\infty$. Here again $m > \alpha$ is an arbitrary integer. In \cite[Theorem 1.15.2]{Tr1} it is shown that for all $\alpha, \beta> 0$ the operator $A^\beta$ is an isomorphism
\begin{equation}\label{Aiso}
 A^\beta: D_A(\alpha+\beta,p) \to D_A(\alpha,p), \qquad A^\beta: D(A^{\alpha+\beta}) \to D(A^\alpha).
\end{equation}
The space $D_A(\alpha,1)$ is the smallest and $D_A(\alpha,\infty)$ is the largest intermediate space of order $\alpha$ for $A$, in the sense that if $X_\alpha$ is of class $J(\alpha/m)\cap K(\alpha/m)$ between $X$ and $D(A^m)$, then
$$D_A(\alpha,1) \hookrightarrow X_\alpha \hookrightarrow D_A(\alpha,\infty),$$
see  \cite[Section 1.10]{Tr1}.

\section{Mixed derivative embeddings with microscopic improvement\label{sec:Newton}}

The main result of this section is the following embedding of mixed derivative type.
\begin{theorem} \label{thm:newton}
Let $X_0$, $X_1$ be an interpolation couple. Let $\theta\in (0,1)$ and $X_{\theta}$ be a Banach space such that $X_0\cap X_1 \subseteq X_{\theta}\subseteq X_0+X_1$ and
\begin{align}\label{eq:ordertheta}
\|x\|_{X_{\theta}} \leq C \|x\|_{X_0}^{1-\theta} \|x\|^{\theta}_{X_1}, \qquad x\in X_0 \cap X_1.
\end{align}
Assume $p_0, p_1, p\in (1,\infty)$, $q_0, q_1,q\in [1,\infty]$, $w_0,w_1\in A_{\infty}$ and a weight $w$  satisfy
\begin{align*}
\frac{1-\theta}{p_0} + \frac{\theta}{p_1} = \frac1p, \qquad \frac{1-\theta}{q_0} + \frac{\theta}{q_1} = \frac1q,
\qquad w_0^{1-\theta} w_1^{\theta} = w.
\end{align*}
Let further $s\in \R$, $\alpha > 0$ and $ \mathcal A \in \{F,B\}$. Then
\[\mathcal A_{p_0,q_0}^{s+\alpha}(\R^d,w_0;X_0) \cap \mathcal A_{p_1,q_1}^{s}(\R^d,w_1;X_1) \hookrightarrow \mathcal A_{p,q}^{s+(1-\theta)\alpha}(\R^d,w;X_{\theta}),\]
and for all $f\in \mathcal A_{p_0,q_0}^{s+\alpha}(\R^d,w_0;X_0) \cap \mathcal A_{p_1,q_1}^{s}(\R^d,w_1;X_1)$ one has
\begin{align*}
\|f\|_{{\mathcal A}^{s+(1-\theta)\alpha}_{p,q}(\R^d,w;X_{\theta})}  \leq C \|f\|_{{\mathcal A}^{s+\alpha}_{p_0,q_0}(\R^d,w_0;X_{0})}^{1-\theta}  \|f\|_{{\mathcal A}^{s}_{p_1,q_1}(\R^d,w_1;X_{1})}^{\theta}.
\end{align*}
\end{theorem}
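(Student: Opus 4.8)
The plan is to prove the pointwise/Littlewood--Paley version of the mixed derivative embedding by a dyadic splitting argument combined with H\"older's inequality in the mixed-norm setting. I would first treat the $F$-case and then note that the $B$-case follows the same scheme (or is even slightly easier, since the $\ell^q$-norm is the outermost one). Fix a single sequence $(\varphi_k)_{k\ge0}\in\Phi(\R^d)$ and write $S_kf=\varphi_k*f$; all three norms are computed with this same sequence. The key elementary inequality, coming from \eqref{eq:ordertheta}, is that for every $k$ and every $x$,
\[
2^{(s+(1-\theta)\alpha)k}\|S_kf(x)\|_{X_\theta}
\le C\,\big(2^{(s+\alpha)k}\|S_kf(x)\|_{X_0}\big)^{1-\theta}\big(2^{sk}\|S_kf(x)\|_{X_1}\big)^{\theta},
\]
which is just $(s+(1-\theta)\alpha)=(1-\theta)(s+\alpha)+\theta s$ together with the interpolation estimate for the norm of $X_\theta$. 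Thus, setting $a_k(x)=2^{(s+\alpha)k}\|S_kf(x)\|_{X_0}$ and $b_k(x)=2^{sk}\|S_kf(x)\|_{X_1}$, the target quantity is dominated pointwise in $k,x$ by $C\,a_k(x)^{1-\theta}b_k(x)^\theta$.

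Now I would take the $\ell^q(X_\theta)$-norm in $k$ of the left-hand side. By H\"older's inequality on $\ell^q$ with exponents $q_0/((1-\theta)q)$ and $q_1/(\theta q)$ — which are conjugate precisely because $\frac{1-\theta}{q_0}+\frac{\theta}{q_1}=\frac1q$ — one obtains, for each fixed $x$,
\[
\Big(\sum_k \big(a_k(x)^{1-\theta}b_k(x)^\theta\big)^q\Big)^{1/q}
\le \Big(\sum_k a_k(x)^{q_0}\Big)^{(1-\theta)/q_0}\Big(\sum_k b_k(x)^{q_1}\Big)^{\theta/q_1},
\]
with the obvious modifications when one of $q_0,q_1$ (hence $q$) is $\infty$. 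Abbreviating $A(x)=\|(a_k(x))_k\|_{\ell^{q_0}}$ and $B(x)=\|(b_k(x))_k\|_{\ell^{q_1}}$, this says the $\ell^q(X_\theta)$-Littlewood--Paley function of $f$ at smoothness $s+(1-\theta)\alpha$ is $\le C\,A(x)^{1-\theta}B(x)^\theta$ pointwise. It remains to take the $L^p(\R^d,w)$-norm in $x$. Here I apply H\"older's inequality in $x$ with exponents $p_0/((1-\theta)p)$ and $p_1/(\theta p)$ — conjugate because $\frac{1-\theta}{p_0}+\frac{\theta}{p_1}=\frac1p$ — against the weight factorization $w=w_0^{1-\theta}w_1^{\theta}$: writing $w^{1/p}=w_0^{(1-\theta)/p_0}\cdot w_0^{(1-\theta)(1/p-1/p_0)}\cdots$ — more cleanly, $w\,dx = (w_0\,dx)^{?}$ is awkward, so instead I estimate $\int A^{(1-\theta)p}B^{\theta p}w\,dx = \int (A^{p_0}w_0)^{(1-\theta)p/p_0}(B^{p_1}w_1)^{\theta p/p_1}\,dx$ after checking that $w_0^{(1-\theta)p/p_0}w_1^{\theta p/p_1}=w$, which holds because $(1-\theta)p/p_0+\theta p/p_1=1$ and $w_0^{1-\theta}w_1^\theta=w$; then H\"older gives $\le \|A^{p_0}w_0\|_{L^1}^{(1-\theta)p/p_0}\|B^{p_1}w_1\|_{L^1}^{\theta p/p_1} = \|f\|_{F^{s+\alpha}_{p_0,q_0}(w_0;X_0)}^{(1-\theta)p}\|f\|_{F^{s}_{p_1,q_1}(w_1;X_1)}^{\theta p}$. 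Taking $p$-th roots yields the stated multiplicative estimate, and the embedding is immediate from it.

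The main technical point to handle carefully — not really an obstacle, but the place where one must be precise — is the interplay of the three H\"older exponents and the weight: one has to verify that the weight identity $w_0^{1-\theta}w_1^\theta=w$ is exactly compatible with the exponent identity $\frac{1-\theta}{p_0}+\frac{\theta}{p_1}=\frac1p$ so that the weighted H\"older step closes with no residual weight. A second point is the bookkeeping for the endpoint cases $q_0=\infty$, $q_1=\infty$, or $q=\infty$ (and similarly one should note $p_0,p_1,p\in(1,\infty)$ is assumed, so no endpoint issue in $x$): in those cases the $\ell^q$-H\"older step degenerates to a supremum estimate, which is elementary. Finally, for $\mathcal A=B$ the only change is that the order of the $\ell^{q}$ and $L^{p}(w)$ norms is swapped; one first applies the weighted H\"older inequality in $x$ to each frequency block and then H\"older in $k$, or more simply applies the generalized H\"older inequality directly to the mixed norm — the arithmetic of the exponents is identical. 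I would remark that this argument uses nothing about $w_0,w_1$ beyond $A_\infty$ (indeed the computation does not even invoke $A_\infty$; it is needed only to ensure the spaces are well defined and independent of $(\varphi_k)$), which matches the hypotheses of the theorem.
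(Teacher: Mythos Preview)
Your approach---pointwise interpolation on the Littlewood--Paley pieces $S_kf$, then H\"older in $\ell^q$, then H\"older in $L^p$ with the weight split, and the $B$-case by the same arithmetic with the order of norms swapped---is exactly what the paper does; the paper's proof is simply terser (``taking $L^p(\R^d,w;\ell^q)$-norms and using H\"older's inequality twice'').

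There is, however, a genuine slip in your weight check. You assert that $w_0^{(1-\theta)p/p_0}w_1^{\theta p/p_1}=w$ ``because $(1-\theta)p/p_0+\theta p/p_1=1$ and $w_0^{1-\theta}w_1^\theta=w$''. That implication is false: knowing $w_0^{1-\theta}w_1^\theta=w$ and that two exponents $\lambda,\mu$ sum to $1$ does not give $w_0^{\lambda}w_1^{\mu}=w$ unless $\lambda=1-\theta$ (take $w_0=|x|^a$, $w_1=|x|^b$ with $a\neq b$ to see $w_0^{\lambda}w_1^{1-\lambda}=|x|^{\lambda a+(1-\lambda)b}$ genuinely depends on $\lambda$). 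The identity your weighted H\"older step actually requires is $w^{1/p}=w_0^{(1-\theta)/p_0}w_1^{\theta/p_1}$, equivalently $w=w_0^{(1-\theta)p/p_0}w_1^{\theta p/p_1}$; this agrees with the stated hypothesis $w=w_0^{1-\theta}w_1^\theta$ only when $p_0=p_1=p$ or $w_0=w_1$. Every application of the theorem in the paper is in that regime ($p_0=p_1=p$, $w_0=w_1=w$), so nothing downstream is affected, but as written your justification of the weighted H\"older step does not close for general $p_0\neq p_1$ with distinct weights.
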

Recall from \cite[Section 1.10]{Tr1} that if a space $X_{\theta}$ satisfies \eqref{eq:ordertheta}, then it belongs to the class $J(\theta)$ between $X_0$ and $X_1$, which in turn is equivalent to $(X_0,X_1)_{\theta,1} \hookrightarrow X_{\theta}$. It is in particular satisfied for the real interpolation spaces $(X_{0}, X_1)_{\theta,u}$ with $u\in [1,\infty]$ and the complex interpolation space $[X_0,X_1]_{\theta}$ (cf. \cite[Theorem 1.10.3/1]{Tr1}). In this sense the embedding allows to transfer smoothness between the inner and the outer scale.

There are also mixed derivative type embeddings available for $\mathcal A \in \{H,W\}$ in the outer scale, see \cite{DK14, DSS08, MS11}. These are based on an abstract result due to \cite{Sob75} concerning the boundedness of $A^{1-\theta}B^{\theta}(A+B)^{-1}$ for resolvent commuting positive operators $A$ and $B$, which typically have to satisfy assumptions of Dore-Venni type (see also \cite[Lemma 4.1]{DSS08}, \cite{EPS03} or \cite[Proposition 1.1]{MS11}).

In the case of $B$- and $F$-spaces we will deduce the result directly from the definitions.

\begin{proof}[Proof of Theorem \ref{thm:newton}] Let us consider the case $\mathcal A = F$. First note that $w\in A_{\infty}$ (see \cite[Exercise 9.1.5]{GraModern}). By assumption we have
\begin{align*}
2^{(s+(1-\theta)\alpha)n} \|S_n f\|_{X_{\theta}}& \leq C \big(2^{(s+\alpha)n} \|S_n f\|_{X_{0}}\big)^{1-\theta} \big(2^{sn} \|S_n f\|_{X_{1}}\big)^{\theta}.
\end{align*}
Taking $L^p(\R^d,w;\ell^q)$-norms and using H\"older's inequality twice, we find that
\begin{align*}
\|f\|_{F^{s+(1-\theta)\alpha}_{p,q}(\R^d,w;X_{\theta})} & = \big\|(2^{(s+(1-\theta)\alpha)n} \|S_n f\|_{X_{\theta}})_{n\geq 0}\big\|_{L^{p}(\R^d,w;\ell^{q})}
\\ & \leq C \big\|\big(2^{(s+\alpha)n} \|S_n f\|_{X_{0}}\big)_{\geq 0}\big\|_{L^{p_0}(\R^d,w_0;\ell^{q_0})}^{1-\theta} \big\|\big(2^{sn} \|S_n f\|_{X_{1}}\big)_{n\geq 0}\big\|_{L^{p_1}(\R^d,w_1;\ell^{q_1})}^{\theta}
\\ & = C\|f\|_{F^{s+\alpha}_{p_0,q_0}(\R^d,w_0;X_{0})}^{1-\theta}  \|f\|_{F^{s}_{p_1,q_1}(\R^d,w_1;X_{1})}^{\theta}.
\end{align*}
The asserted embedding now follows from Young's inequality. The case $\mathcal A = B$ is proved in the same way.
\end{proof}

We comment on special cases of interest.
\begin{remark} \label{rem:mixed-der}\
\begin{enumerate}[(i)]
%
\item If $A$ is a positive operator (see Section \ref{Sec-interp}) on $X_0 = X$ with $X_1 = D(A)$, then one can take $X_\theta = D(A^\theta)$ or $X_\theta = D_A(\theta, 1) = (X, D(A))_{\theta,1}$ (see \cite[Theorem 1.15.2]{Tr1}). The latter is the smallest space of class $J(\theta)$ between $X$ and $D(A)$.  In this sense we obtain a microscopic improvement in the inner scale. More generally, employing $D_A(\theta \beta,1) = (X, D_A(\beta,\infty))_{\theta,1}$
for $\beta > 0$ (see \eqref{reiteration}), we get
\[\mathcal A_{p,q}^{s+\alpha}(\R^d;X) \cap \mathcal A_{p,q}^{s}(\R^d;D_A(\beta,\infty)) \hookrightarrow \mathcal A_{p,q}^{s+(1-\theta) \alpha}(\R^d;D_A(\theta\beta,1)).\]

\item The microscopic improvement becomes even more transparent when considering function spaces in the inner regularity scale. Recall from Section \ref{sec:embeddings} that $B_{r,1}^t$ is the smallest and $B_{r,\infty}^t$ is the largest of the $B$-, $F$-, $H$- and $W$-spaces for fixed $t$ and $r$. Since for $\beta >0$ we have (see \cite[Theorem 2.4.1]{Tr1})
$$(B_{r,\infty}^{t}, B_{r,\infty}^{t+\beta})_{\theta,1} = B_{r,1}^{t+\theta\beta},$$
the theorem yields
$$\mathcal A_{p,q}^{s+\alpha}(\R^d; B_{r,\infty}^{t}) \cap \mathcal A_{p,q}^{s}(\R^d;B_{r,\infty}^{t+\beta}) \hookrightarrow \mathcal A_{p,q}^{s+(1-\theta)\alpha}(\R^d; B_{r,1}^{t+\theta\beta}).$$
For instance, this implies
$$B_{p,q}^{s+\alpha}(\R; L^r) \cap B_{p,q}^{s}(\R; W^{\beta,r}) \hookrightarrow B_{p,q}^{s+(1-\theta)\alpha}(\R; H^{\theta\beta,r}).$$
In this direction, another interesting case is
$$X_0 = F_{r_0,\infty}^{t}, \qquad  X_1 = F_{r_1,\infty}^{t+\beta}, \qquad X_{\theta} = F_{r,1}^{t+\theta\beta},$$
where $\frac{1}{r}= \frac{1-\theta}{r_0} + \frac{\theta}{r_1}$. Here a Gagliardo-Nirenberg type inequality (see, e.g., \cite[Proposition 5.1]{MeyVer1}) ensures that \eqref{eq:ordertheta} holds true.
\end{enumerate}
\end{remark}

At this point it is natural to ask for a microscopic improvement in the outer regularity scale. The next result implies that this does not hold, in general.
\begin{proposition}\label{prop:outerscale}
Let $s,t\in \R$, $\alpha, \beta>0$, $p,r\in (1, \infty)$, $1\leq u<q\leq \infty$ and $\theta\in [0,1]$. Then
\[F^{s+\alpha}_{p,q}(\R;B^{t}_{r,1}(\R))\cap F^{s}_{p,q}(\R;B^{t+\beta}_{r,1}(\R))\nsubseteq F^{s+(1-\theta)\alpha}_{p,u}(\R;B^{t+\theta \beta}_{r,\infty}(\R)).\]
\end{proposition}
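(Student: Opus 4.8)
The goal is to exhibit a function $f$ lying in the intersection on the left-hand side but not in the target space $F^{s+(1-\theta)\alpha}_{p,u}(\R;B^{t+\theta\beta}_{r,\infty}(\R))$. The natural strategy is to \emph{separate the variables}: take $f(x,y) = g(x)\,h(y)$ with $g\in\Schw(\R)$ (in the outer variable $x$) and $h$ chosen so that $h$ carries the inner regularity information, and then tune $g$ so that the required outer-scale $F_{p,u}$-norm blows up while all the $F_{p,q}$-norms stay finite. The key point that makes this possible is the strict inequality $u<q$: the $\ell^q$-summation inside $F_{p,q}$ is more forgiving than the $\ell^u$-summation inside $F_{p,u}$, so a lacunary series in the outer frequency that is $\ell^q$-summable but not $\ell^u$-summable will do the job. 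The transfer of $\alpha$ smoothness in the outer scale (the passage from $s+\alpha$ and $s$ to $s+(1-\theta)\alpha$) is \emph{exactly} of ``class $J(1-\theta)$'' type, and the failure of the microscopic improvement reflects that $F_{p,u}$ with $u<q$ is genuinely smaller than the interpolation-optimal space $F_{p,q}$ at the intermediate smoothness. By contrast the inner scale plays no obstructive role, since $B^{t}_{r,1}\cap B^{t+\beta}_{r,1}$ already embeds into every $B$-space of intermediate smoothness, in particular into $B^{t+\theta\beta}_{r,\infty}$; so the inner factor $h$ can be taken as any fixed nonzero element of $B^{t}_{r,1}(\R)\cap B^{t+\beta}_{r,1}(\R)$, e.g.\ a Schwartz function, and it contributes only bounded constants.

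\textbf{Key steps.} First I would reduce to a scalar, one-variable statement in the outer variable. Writing $f(x,y)=g(x)h(y)$ with $h$ a fixed Schwartz function, the Littlewood-Paley pieces factor as $S_n f(x,y) = (S_n^{(x)} g)(x)\,h(y)$ (the inner frequency of $h$ contributes only a fixed bounded range, absorbed into constants), so that up to equivalence of norms
\[
\|f\|_{F^{\sigma}_{p,v}(\R;B^{\tau}_{r,w}(\R))} \;\asymp\; \|h\|_{B^{\tau}_{r,w}(\R)}\,\|g\|_{F^{\sigma}_{p,v}(\R)},
\]
for the relevant triples $(\sigma,v,\tau,w)$. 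Hence it suffices to produce $g\in F^{s+\alpha}_{p,q}(\R)\cap F^{s}_{p,q}(\R) = F^{s}_{p,q}(\R)$ (note $F^{s+\alpha}_{p,q}\subseteq F^{s}_{p,q}$, so the intersection is just the larger-smoothness space $F^{s+\alpha}_{p,q}(\R)$ — actually we need $g$ in $F^{s+\alpha}_{p,q}\cap F^{s}_{p,q}=F^{s+\alpha}_{p,q}$) with $g\notin F^{s+(1-\theta)\alpha}_{p,u}(\R)$; since $s+(1-\theta)\alpha < s+\alpha$ when $\theta>0$ this is a statement that $F^{s+\alpha}_{p,q}\not\subseteq F^{\sigma}_{p,u}$ for $\sigma = s+(1-\theta)\alpha\le s+\alpha$, with $u<q$. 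For $\theta=1$ one needs $F^{s}_{p,q}\not\subseteq F^{s}_{p,u}$, which is the standard strict inclusion $\eqref{eq:monotony}$ being strict. Second, I would construct $g$ explicitly as a lacunary Littlewood-Paley series: pick a sequence of frequencies $2^{n_k}$ with $n_k$ sparse, set $\widehat{g} = \sum_k c_k \widehat{\psi}_{n_k}$ with $\widehat{\psi}_{n_k}$ a smooth bump supported in the $n_k$-th dyadic annulus, and choose coefficients $c_k$ so that $\sum_k (2^{(s+\alpha)n_k}|c_k|)^q < \infty$ but $\sum_k (2^{\sigma n_k}|c_k|)^u = \infty$; because $\sigma \le s+\alpha$ this is possible exactly because $u<q$ — e.g.\ if $\sigma = s+\alpha$ take $2^{(s+\alpha)n_k}c_k = k^{-1/u'}$-type decay that is $\ell^q$ but not $\ell^u$, and if $\sigma<s+\alpha$ an even wider margin is available so the same (or a cruder) choice works. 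Third, I would verify via the Littlewood-Paley definition of $F$-spaces on $\R$ that for a lacunary series the $F^{\sigma}_{p,v}$-norm is comparable to $\|g\|_{L^p}\cdot(\text{something})$; more precisely, since the frequency supports are disjoint dyadic annuli, $\|g\|_{F^{\sigma}_{p,v}(\R)} \asymp \big\|(2^{\sigma n_k} c_k \psi_{n_k})_k\big\|_{L^p(\ell^v)}$ and, using that each $\psi_{n_k}$ is an $L^p$-normalized bump and the standard disjoint-support argument, this is comparable to $\big(\sum_k (2^{\sigma n_k}|c_k|)^v\big)^{1/v}$ (for $g$ of the normalized form). This gives finiteness of all the $F^{s+\alpha}_{p,q}$ and $F^{s}_{p,q}$ norms and infinitude of the $F^{\sigma}_{p,u}$ norm, completing the construction.

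\textbf{Main obstacle.} The genuinely delicate part is \emph{step three}: making rigorous the comparison between the $F^{\sigma}_{p,v}(\R;B^{\tau}_{r,w}(\R))$-norm of a tensor/lacunary function and the mixed sequence norm of its coefficients, including the fact that the inner $B^{\tau}_{r,w}$-factor of a fixed Schwartz $h$ contributes only a bounded constant uniformly in the outer frequency. One has to be careful that, although $\widehat h$ is not compactly supported, $h$ lies in \emph{every} $B^{\tau}_{r,w}$ with bounded norm, and that when forming $S_n(g\otimes h)$ the two Littlewood-Paley decompositions interact through a double sum that is controlled because the outer frequencies $2^{n_k}$ are lacunary while the inner decomposition of $h$ is summable. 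An alternative, cleaner route that avoids any tensor subtlety for the lower bound is to test the target norm directly: restrict to a single dyadic block $S_{n_k} f$, use $\|f\|_{F^{\sigma}_{p,u}} \ge c\, 2^{\sigma n_k}\|S_{n_k}f\|_{L^p(\R;B^{\tau}_{r,\infty})}$ after a Fourier-support projection, sum the blocks, and invoke the divergence of $\sum_k(2^{\sigma n_k}|c_k|)^u$; for the upper bounds (finiteness of the $F_{p,q}$-norms) one uses that disjoint dyadic blocks give $\ell^q$-orthogonality in $L^p(\ell^q)$ up to constants (a standard consequence of the Littlewood-Paley characterization and Minkowski/Hölder). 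I expect these estimates to be routine once the lacunary structure is fixed, so the real content is simply identifying that $u<q$ is the precise obstruction to a microscopic improvement in the outer scale — mirroring the positive inner-scale result of Theorem~\ref{thm:newton} and sharpening it by showing the outer analogue is false.
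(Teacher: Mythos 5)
Your reduction to a scalar statement via a pure tensor $f=g\otimes h$ with a \emph{fixed} inner factor $h$ cannot work for $\theta\in(0,1]$, and the coefficient choice you propose does not exist. The scalar non-embedding you need, $F^{s+\alpha}_{p,q}(\R)\nsubseteq F^{s+(1-\theta)\alpha}_{p,u}(\R)$, is false as soon as $\theta>0$: a strict drop in smoothness at fixed integrability allows one to change the microscopic parameter arbitrarily, since $F^{s_0}_{p,q}\hookrightarrow B^{s_0}_{p,\infty}\hookrightarrow B^{s_1}_{p,1}\hookrightarrow F^{s_1}_{p,u}$ whenever $s_0>s_1$ (only at equal smoothness, i.e.\ $\theta=0$, is the strictness of \eqref{eq:monotony} available). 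Concretely, in your lacunary construction you would need $\bigl(2^{(s+\alpha)n_k}c_k\bigr)_k\in\ell^q$ while $\sum_k\bigl(2^{\sigma n_k}|c_k|\bigr)^u=\infty$ with $\sigma=s+(1-\theta)\alpha<s+\alpha$; but $2^{\sigma n_k}|c_k|=2^{-\theta\alpha n_k}\,2^{(s+\alpha)n_k}|c_k|$, and the geometric factor $2^{-\theta\alpha n_k}$ makes the $\ell^u$-sum finite by H\"older, so no such $(c_k)$ exists --- the ``wider margin'' works against you, not for you. More structurally, \emph{any} nonzero tensor in the left-hand side lies in the right-hand side when $\theta>0$: $h\in B^{t}_{r,1}\cap B^{t+\beta}_{r,1}\hookrightarrow B^{t+\theta\beta}_{r,\infty}$ and $g\in F^{s+\alpha}_{p,q}\hookrightarrow F^{s+(1-\theta)\alpha}_{p,u}$, so separated variables can never produce a counterexample there; your fallback ``single dyadic block'' argument inherits the same false reduction. (Your case $\theta=1$ is also internally inconsistent: membership in the left-hand side forces $g\in F^{s+\alpha}_{p,q}$, which does embed into $F^{s}_{p,u}$.)

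The missing idea is to \emph{correlate} the inner and outer frequencies rather than freeze the inner factor. The paper takes $f(x)(y)=\sum_n a_n e^{2\pi i 2^n x}e^{2\pi i R^n y}\zeta(x)\zeta(y)$ with $R=2^{\alpha/\beta}$, so that the $n$-th outer block carries inner frequency $R^n$ and the inner Besov norms contribute weights $R^{tn}$, $R^{(t+\beta)n}$, $R^{(t+\theta\beta)n}$. With this choice the exponents in all three norms coincide ($s+\alpha+\tfrac{\alpha}{\beta}t$ in each case), so the two left-hand norms are the $\ell^q$-norm and the right-hand norm is the $\ell^u$-norm of the \emph{same} sequence $\bigl(2^{(s+\tfrac{\alpha}{\beta}t+\alpha)j}a_j\bigr)_j$; a closed-graph argument then reduces the claimed inclusion to $\ell^q\hookrightarrow\ell^u$, which fails since $u<q$. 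In other words, the inner smoothness gain along the diagonal exactly compensates the outer smoothness loss, which is precisely what blocks the smoothness-drop embedding your tensor argument relies on; without this coupling the counterexample cannot be made to work for $\theta>0$.
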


As a consequence of the monotonic properties \eqref{eq:scaleBF} of the function spaces and the elementary embeddings \eqref{eq:TriebelLizorkinH} and \eqref{eq:TriebelLizorkinW}, an inclusion as above for intersection spaces where the Besov spaces in the inner scale are replaced by any $F$-, $B$-, $H$- or $W$-space with the same smoothness and integrability parameter does not hold as well. Furthermore, let $u\in [1, q)$. Then the proposition implies that, for a positive operator $A$, in general one has
\[F^{s+\alpha}_{p,q}(\R;X)\cap F^{s}_{p,q}(\R;D(A))\nsubseteq F^{s+(1-\theta)\alpha}_{p,u}(\R;D(A^{\theta})).\]
Indeed, for example one can take
$A = 1-\Delta$ on $L^2(\R)$ with $D(A) = H^{2,2}(\R)$ and $D(A^{\theta}) = H^{2\theta,2}(\R)$.

\begin{proof}[Proof of Proposition \ref{prop:outerscale}]
In order to obtain a contradiction, assume that the inclusion holds, which is then a continuous embedding by a closed graph argument.

Let $R = 2^{\frac{\alpha}{\beta}}$.
Recall from \cite[Remark 2.3.1/3]{Tri83} that for $t\in \R$ and $z\in [1, \infty]$, the norm of $B^{t}_{r,z}(\R)$ is equivalent to
\[f\mapsto \|(R^{tn} \psi_n*f)_{n\geq 0}\|_{\ell^z(L^{r})},\]
where $(\widehat{\psi}_n)_{n\geq 0}$ is a decomposition of unity with the $2^n$-factor replaced by $R^n$ (see the definition of $\Phi(\R)$ in Section \ref{prelim-1}). Let further $(\varphi_n)_{n\geq 0}\in \Phi(\R)$.

We may assume that there is a small $\delta >0$ such that $\widehat{\varphi}_n = 1$ and $\widehat{\varphi}_{j} = 0$ for $j\neq n$  on $[2^{n}-\delta, 2^{n}+\delta]$, and $\widehat{\psi}_n = 1$ and $\widehat{\psi}_{j} = 0$ for $j\neq n$ on $[R^{n}-\delta, R^{n}+\delta]$, for all $n\geq 1$.
Fix a sequence of real numbers $(a_{n})_{n\geq 1}$ of which only finitely many are nonzero. Let $f:\R\to B^{t+\beta}_{r,1}(\R)$ be defined by
\[f(x)(y) = \sum_{n\geq 1} a_n (\F^{-1} \one_{[2^n-\delta,2^n+\delta]})(x) (\F^{-1} \one_{[R^n-\delta,R^n+\delta]})(y) = \sum_{n\geq 1} a_n e^{2\pi i2^n x} e^{2\pi i R^n y}  \zeta(x) \zeta(y),\]
where $\zeta(x) = \frac{1}{2\pi i x}( e^{2\pi i x \delta} - e^{-2\pi i x \delta})$ is independent of $n$. Note that $\zeta \in L^\sigma(\R)$ for each $\sigma > 1$. Let $Y= B^t_{r,z}(\R)$ and $s\in \R$. Then one has
\[\|f\|_{F^{s}_{p,q}(\R;Y)} = \|(2^{sj} \varphi_j*f)_{j\geq 0}\|_{L^p(\R;\ell^q(Y))}.\]
Here $\varphi_0*f =0$, and for each $j\geq 1$ and $x\in \R$,
\[\|\varphi_j*f(x)\|_Y = \|(R^{tn} \psi_n * [\varphi_j*f(x)])_{n\geq 0}\|_{\ell^z(L^{r}(\R))} = R^{tj} |a_j|  |\zeta(x)| \|\zeta\|_{L^{r}(\R)},\]
where in the second expression the first convolution is with respect to $y$ and the second convolution is with respect to $x$.
It follows that
\begin{equation*}
\|f\|_{F^s_{p,q}(\R;B^t_{r,z}(\R) )} =  \|f\|_{F^s_{p,q}(\R;Y)} = C_{p,r} \|(2^{sj} R^{tj}a_j)_{j\geq 1}\|_{\ell^q},
\end{equation*}
where $C_{p,r} =  \|\zeta\|_{L^{p}(\R)} \|\zeta\|_{L^{r}(\R)}$.
Therefore, using $R =2^{\frac{\alpha}{\beta}}$,
\begin{align*}
\|f\|_{F^{s+(1-\theta)\alpha}_{p,u}(\R;B^{t+\theta\beta}_{r,\infty}(\R))}  & = C_{p,r} \|(2^{(s+(1-\theta)\alpha)j} R^{(t+\theta\beta)j} a_j)_{j\geq 1}\|_{\ell^u} = C_{p,r} \|(2^{(s+ \frac{\alpha}{\beta} t + \alpha)j}  a_j)_{j\geq 1}\|_{\ell^u},
\\
\|f\|_{F^{s+\alpha}_{p,q}(\R;B^{t}_{r,1}(\R))} &= C_{p,r}  \|(2^{(s+\alpha)j }R^{tj} a_j)_{j\geq 1}\|_{\ell^q} = C_{p,r} \|2^{(s+ \alpha + \frac{\alpha}{\beta} t )j} a_j)_{j\geq 1}\|_{\ell^q},
\\  \|f\|_{F^{s}_{p,q}(\R;B^{t+\beta}_{r,1}(\R))} & = C_{p,r}  \|(2^{sj} R^{(t+\beta)j} a_j)_{j\geq 1}\|_{\ell^q} = C_{p,r} \|(2^{(s+ \frac{\alpha}{\beta} t + \alpha)j}  a_j)_{j\geq 1}\|_{\ell^q}.
\end{align*}
All the sequences in the above norms coincide. Hence the continuity of the embedding yields $\ell^q\hookrightarrow \ell^u$, which is false.
\end{proof}

\section{Proof of Theorem \ref{thm:main3}: traces of weighted anisotropic spaces\label{sec:tracesintersections}}

Let $A$ be a  positive operator on $X$ and $w_\gamma(t)=|t|^\gamma$ with $\gamma\in (-1,p-1)$. In this section we will first show the characterization \eqref{intro-Ftrace} in Theorem \ref{thm:main3}. This is to prove that the image of the trace operator $\tr_0 u = u|_{t=0}$ for a space
$$F_{p,q}^{s+\alpha}(\R,w_\gamma;X) \cap F_{p,q}^{s}(\R,w_\gamma; D_A(\alpha,r)), \qquad  s < \frac{1+\gamma}{p} < s+\alpha, \quad q,r\in [1,\infty],$$
is the real interpolation space $D_A(\theta,p)$, where $\theta = s+\alpha-\frac{1+\gamma}{p}$.

The operator $\tr_0$ is defined on an intersection space as above in the following sense. For $q\in [1,\infty)$ it can be seen as in \cite[Lemma 3.8]{MeyVer1} that $\Schw(\R; D_A(\alpha,r))$ is a dense subset. The trace of such functions is defined in the classical sense and may be extended to an intersection space by a corresponding norm estimate. For  $q=\infty$ we find a suitable larger intersection space where the trace can again be defined by density and whose trace space turns out to be the correct one for the original space.

We will obtain  \eqref{intro-Ftrace} as a consequence of Theorem \ref{thm:F-embed} and Proposition \ref{prop:extgeneral}. In fact, in Theorem \ref{thm:F-embed} we prove that the trace $\tr_0$ may continuously be extended to
\[\tr_0:F^{s+\alpha}_{p,\infty}(\R,w_\gamma;X) \cap F^{s}_{p,\infty}(\R,w_\gamma;D_A(\alpha,\infty)) \to D_A(\theta,p).\]
Since $F^{s}_{p,q}\hookrightarrow F^{s}_{p,\infty}$ and $D_{A}(\alpha, r) \hookrightarrow D_A(\alpha,\infty)$, this shows the continuity for all $q$ and $r$. This result actually holds for all $\gamma > -1$.

To prove the surjectivity of $\tr_0$, in Proposition \ref{prop:extgeneral} we show that there is a continuous right-inverse of $\tr_0$ mapping
\[D_A(\theta,p)\to F^{s+\alpha}_{p,1}(\R,w_\gamma;X) \cap F^{s}_{p,1}(\R,w_\gamma;D_A(\alpha,1)).\]
As above, since $F^{s}_{p,1}\hookrightarrow F^{s}_{p,q}$ and $D_A(\alpha,1) \hookrightarrow D_A(\alpha,r)$, this gives a continuous right-inverse for all $q$ and $r$. The right-inverse is essentially the resolvent of $A$ combined with an extension operator for the half-line $\R_+$.

We emphasize that also in the unweighted case $\gamma = 0$ the proofs below make heavy use of power weights.

\begin{theorem}\label{thm:F-embed}
Let $A$ be a positive operator on $X$, let $p\in (1,\infty)$ and $\gamma > -1$. Suppose that $s\in \R$ and $\alpha > 0$ satisfy $s < \frac{1+\gamma}{p} <  s+\alpha$, and set $\theta = s+\alpha - \frac{1+\gamma}{p}$. Then the trace operator $\emph{\text{tr}}_0 u = u|_{t=0}$ extends to a continuous map
\begin{equation}\label{trace-cont}
 F^{s+\alpha}_{p,\infty}(\R,w_\gamma;X) \cap F^{s}_{p,\infty}(\R,w_\gamma;D_A(\alpha,\infty)) \to D_A(\theta,p).
\end{equation}
\end{theorem}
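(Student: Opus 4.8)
\textbf{Proof plan for Theorem \ref{thm:F-embed}.}
The plan is to reduce the claim to a concrete estimate on the values $u(t)$ for $t$ near $0$ in terms of the two difference seminorms appearing in the $F_{p,\infty}$-norms, and then to recognize the resulting quantity as (equivalent to) the $D_A(\theta,p)$-norm via the interpolation description \eqref{eq:realinterResolvent}. First I would use that $\Schw(\R;D_A(\alpha,\infty))$ is dense in the intersection space when the microscopic parameters are finite (arguing as in \cite[Lemma 3.8]{MeyVer1}), so it suffices to prove the norm estimate
\[
\|u(0)\|_{D_A(\theta,p)} \leq C\,\big(\|u\|_{F^{s+\alpha}_{p,\infty}(\R,w_\gamma;X)} + \|u\|_{F^{s}_{p,\infty}(\R,w_\gamma;D_A(\alpha,\infty))}\big)
\]
for Schwartz $u$; continuity of the extension to the full space with $q=\infty$ then follows by the embeddings $F^{s}_{p,q}\hookrightarrow F^{s}_{p,\infty}$ and $D_A(\alpha,r)\hookrightarrow D_A(\alpha,\infty)$ from \eqref{eq:monotony} and the monotonicity of real interpolation in the last parameter.

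For the estimate itself I would work with the difference-norm characterization of $F$-spaces from Proposition \ref{prop:Lpsmoothness}, applied on $\R$ with the power weight $w_\gamma\in A_p$ (valid since $\gamma>-1$ suffices for the upper bound $\nn\cdot\nn^{(m)}\le C\|\cdot\|_F$, which is the direction I need — one must be slightly careful here and possibly restrict to $\gamma\in(-1,p-1)$ or invoke the finiteness clause, but the statement only asserts continuity). Pick integers $m_0>s+\alpha$ and $m_1>s$. The key computation is a pointwise-in-$t$ bound: for $t>0$, expand $u(0)$ using a telescoping/Taylor-type identity built from the $m_0$-th differences $\Delta_h^{m_0}u$ at scale $\sim t$ (controlling $X$-smoothness of order $s+\alpha$) and from the $m_1$-th differences $\Delta_h^{m_1}u$ at scale $\sim t$ (controlling $D_A(\alpha,\infty)$-smoothness of order $s$), so that applying $(A(\sigma+A)^{-1})^{m}$ with $\sigma\sim t^{-1}$ turns the $D_A(\alpha,\infty)$-bound into a gain of a factor $\sigma^{-\alpha}$. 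Integrating $\sigma^{\theta p}\|(A(\sigma+A)^{-1})^m u(0)\|^p\,\tfrac{d\sigma}{\sigma}$ and changing variables $\sigma\leftrightarrow t^{-1}$, the weight $w_\gamma(t)=t^\gamma$ and the exponent relation $\theta=s+\alpha-\tfrac{1+\gamma}{p}$ conspire (this is a Hardy-type inequality in the $t$-variable) to bound the integral by the $p$-th powers of the two $F_{p,\infty}$-seminorms. Recognizing the left side via \eqref{eq:realinterResolvent} as $\|u(0)\|_{D_A(\theta,p)}^p$ finishes the argument. For $q=\infty$ one also checks, as mentioned in the excerpt, that passing through a slightly larger intersection space where $\tr_0$ is defined by density does not enlarge the trace space.

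The main obstacle I expect is the pointwise reconstruction step: one must produce a single integral identity expressing $u(0)$ (or rather $(A(\sigma+A)^{-1})^m u(0)$) in which the $X$-part contributes a convergent integral near $\sigma=\infty$ using only $s+\alpha$ smoothness and the $D_A(\alpha,\infty)$-part contributes a convergent integral near $\sigma=0$ using only $s$ smoothness, with the crossover at $\sigma\sim t^{-1}$ — and crucially the averaged differences in the $F$-norm (integrals $t^{-1}\int_{|h|\le t}\|\Delta_h^m u\|\,dh$, not pointwise $\|\Delta_h^m u\|$) must be handled, which is exactly where the $F$-space difference norm of Proposition \ref{prop:Lpsmoothness} and a Hardy inequality on $L^p(\R_+,t^\gamma)$ are needed rather than a naive Besov estimate. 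A secondary technical point is justifying the density/extension in the $q=\infty$ endpoint and checking that the constants are uniform as the regularity parameters vary, but these are routine given the tools already assembled in Section \ref{sec:prel}.
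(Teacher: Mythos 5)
The main gap is your treatment of the $q=\infty$ endpoint, which is precisely what the theorem asserts. Schwartz functions are dense in $F^{s}_{p,q}(\R,w_\gamma;X)$ only for $q<\infty$, so "extend by density" is not available in $F_{p,\infty}$; and the embeddings you invoke, $F^{s}_{p,q}\hookrightarrow F^{s}_{p,\infty}$ and $D_A(\alpha,r)\hookrightarrow D_A(\alpha,\infty)$, go the wrong way: an estimate of $\|u(0)\|_{D_A(\theta,p)}$ by $F_{p,q}$-norms with $q<\infty$ on a dense class gives nothing for the larger space $F_{p,\infty}$. The missing idea -- and the crux of the paper's argument -- is the weighted Sobolev embedding with \emph{fixed} integrability $p$ (Theorem \ref{thm:SobB}): for small $\e>0$ one has $F^{\sigma}_{p,\infty}(\R,w_\gamma;Y)\hookrightarrow F^{\sigma-\e/p}_{p,1}(\R,w_{\gamma-\e};Y)$, which preserves $\sigma-\frac{1+\gamma}{p}$ and hence the target space $D_A(\theta,p)$; this reduces the $q=\infty$ statement to a $q=1$ statement for a slightly smaller weight exponent, where density and the difference norm are available. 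The same device is needed for $\gamma\geq p-1$, which the theorem allows (only $\gamma>-1$ is assumed): your difference-norm characterization (Proposition \ref{prop:Lpsmoothness}) requires $w_\gamma\in A_p$, i.e.\ $\gamma<p-1$, and your hedge ("possibly restrict to $\gamma\in(-1,p-1)$") leaves that part of the statement unproved, whereas the paper first embeds $F^{\sigma}_{p,1}(\R,w_\gamma;Y)\hookrightarrow F^{\sigma-\gamma/p}_{p,1}(\R;Y)$ by Theorem \ref{thm:SobB} and only then runs the estimate.

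The core estimate is also only sketched at its hard point, which you yourself flag. The paper's version of your "reconstruction step" is the explicit identity $u(0)=\sigma^{-1}\int_0^\sigma u(\tau)\,d\tau-\int_0^\sigma t^{-2}\int_0^t (u(t)-u(\tau))\,d\tau\,dt$, used with the $m=1$ resolvent norm \eqref{eq:realinterResolvent} and \emph{first} differences together with the Hardy--Young inequality; this forces the restriction $0<s<\frac{1+\gamma}{p}<s+\alpha<1$, and the general exponents are then recovered not by a higher-order telescoping identity with $m_0>s+\alpha$, $m_1>s$ (which you do not construct, and whose interplay with the resolvent powers is exactly the delicate part) but by the mixed-derivative embedding of Theorem \ref{thm:newton}, shifting the base space to $D(A^{(1-\sigma)\alpha})$, together with a weight shift for $s\le 0$. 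Note also that the "Hardy-type inequality in the $t$-variable" controlling the contribution near $\sigma=0$ by the $F^{s}$-norm of $u$ with values in $D_A(\alpha,\infty)$ is again the weighted Sobolev embedding ($F^{s}_{p,q}(\R,w_\gamma)\hookrightarrow L^p(\R,w_{\gamma-sp})$), not a classical Hardy inequality alone. So while your overall architecture (reconstruction identity, resolvent characterization of $D_A(\theta,p)$, Hardy inequality, difference norms) matches the paper's first step, as written the plan has two genuine gaps: the unproved reconstruction identity for general exponents, and a failing mechanism for the $q=\infty$ endpoint and for $\gamma\ge p-1$; the paper closes both with Theorem \ref{thm:SobB} and Theorem \ref{thm:newton}.
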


\begin{proof} In the Steps 1--4 we prove \eqref{trace-cont} with $F^{t}_{p,\infty}$ replaced by $F^{t}_{p,1}$ for $t\in \{s, s+\alpha\}$. In Step 5 we show the general result \eqref{trace-cont}.

We will use a classical Hardy-Young inequality (see \cite[p. 245-246]{HLP}), stating that for all measurable functions $f:\R_+\to \R_+$ and all $\beta >0$ we have
\begin{equation}\label{eq:HardyYoung}
\int_0^\infty \sigma^{-\beta p-1} \Big(\int_0^\sigma f(\tau) \, \, d\tau\Big)^p \, d\sigma \leq \beta^{-p} \int_0^\infty \sigma^{-\beta p -1+p} f(\sigma)^p\, d\sigma.
\end{equation}

{\em Step 1.} First assume
\begin{equation}\label{eq:assumptionStep1}
0 < s < \frac{1+\gamma}{p} < s+\alpha<1,
\end{equation}
such that $\theta\in (0,1)$.
The idea of the following argument is due to \cite[Lemma 11]{DiB84}. Let $u\in \Schw(\R; D_A(\alpha,\infty))$. Writing $u(t) - u(\tau) = \int_\tau^t u'(\xi) \, d\xi$, it is straight forward to check that for all $\sigma > 0$ we have
\[u(0) = \sigma^{-1}\int_0^\sigma u(\tau)\,d \tau - \int_0^\sigma t^{-2} \int_0^t u(t)-u(\tau) \, d\tau  \, dt.\]
This representation and the equivalent norm \eqref{eq:realinterResolvent} with $m=1$ yield
\begin{align*}
\|u(0)\|_{D_A(\theta,p)}  & \leq C (T_1+ T_2),
\end{align*}
where
\begin{align*}
T_1^p &= \int_0^\infty \sigma^{-\theta p-1}\Big\|\sigma^{-1}\int_0^\sigma A (\sigma^{-1} + A)^{-1} u(\tau)\,d \tau \Big\|^p \, d\sigma,\\
T_2^p &=\int_0^\infty \sigma^{-\theta p-1} \Big\|\int_0^\sigma t^{-2} \int_0^t A(\sigma^{-1} + A)^{-1}(u(t)-u(\tau)) \, d\tau  \, dt\Big\|^p \, d\sigma.
\end{align*}
We estimate $T_1$. By \cite[Theorem 1.14.2]{Tr1} we have
\[\|A(\sigma^{-1}+A)^{-1}x\|\leq C \sigma^{\alpha}\|x\|_{D_A(\alpha,\infty)}, \qquad \sigma > 0.\]
Together with \eqref{eq:HardyYoung} for $\beta = s-\frac{1+\gamma}{p} +1>0$ this implies
\begin{align*}
T_1^p  & \leq \int_0^\infty \sigma^{-\theta p-1-p} \Big(\int_0^\sigma \|A(\sigma^{-1}+A)^{-1}u(\tau)\| \,d \tau \Big)^{p} \, d\sigma
\\
& \leq C \int_0^\infty \sigma^{-(s-\frac{1+\gamma}{p} +1)p-1}  \Big(\int_0^\sigma \|u(\tau)\|_{D_A(\alpha,\infty)} \,d \tau \Big)^{p} \, d\sigma
\\ & \leq C\int_0^\infty \sigma^{ \gamma-sp}  \|u(\sigma)\|_{D_A(\alpha,\infty)}^p \, d\sigma \leq C \|u\|_{L^p(\R,w_{\gamma-sp}; D_A(\alpha,\infty))}^p.
\end{align*}
Now the elementary embedding \eqref{eq:TriebelLizorkinH} and the Sobolev embedding from Theorem \ref{thm:SobB} give the following inequality of Hardy type,
\begin{align*}
\|u\|_{L^p(\R,w_{\gamma-sp}; D_A(\alpha,\infty))}\leq C \|u\|_{F^0_{p,1}(\R,w_{\gamma-sp}; D_A(\alpha,\infty))} \leq C \|u\|_{F^{s}_{p,1}(\R,w_\gamma;D_A(\alpha, \infty))},
\end{align*}
where we used that $\gamma-sp > -1$. Therefore,
$$
T_1 \leq C\|u\|_{F^{s}_{p,1}(\R,w_{\gamma};D_A(\alpha, \infty))}.
$$
For the estimate of $T_2$ we observe that
\[\|A(\sigma^{-1}+A)^{-1}x\| \leq C \|x\|, \qquad \sigma > 0.\]
Set $f(t) = t^{-2} \int_0^t \|u(t)-u(\tau)\| \, d\tau$. Then from \eqref{eq:HardyYoung}, Proposition \ref{prop:Lpsmoothness} and \eqref{eq:Fnormq=1} we obtain
\begin{align*}
T_2^p & \leq C \int_0^\infty \sigma^{-\theta p-1} \Big(\int_0^\sigma  f(t)  \, dt\Big)^p \, d\sigma \leq C\int_0^\infty \sigma^{-\theta p-1+p} f(\sigma)^p  \, d\sigma \\
& = C\int_0^\infty \sigma^{-\theta p-1-p} \Big(\int_{-\sigma}^0 \|u(\sigma)-u(\sigma+h)\| \, dh\Big)^{p}   \, d\sigma\\
 & \leq C\int_0^\infty \sigma^{\g} \Big(\int_{-\sigma}^0 |h|^{-s-\alpha -1}\|u(\sigma)-u(h+\sigma)\|\, dh\Big)^{p}   \, d\sigma
\\ & \leq C \big( [u]_{F^{s+\alpha}_{p,1}(\R,w_{\gamma};X)}^{(1)}\big)^p \leq C\|u\|_{F^{s+\alpha}_{p,1}(\R,w_\gamma;X)}^p.
\end{align*}
We therefore find
\begin{align*}
\|u(0)\|_{D_A(\theta,p)} \leq C\big( \|u \|_{F^{s}_{p,1}(\R,w_\gamma;D_A(\alpha,\infty))} + \|u\|_{F^{s+\alpha}_{p,1}(\R,w_\gamma;X)}\big)
\end{align*}
for all $u\in \Schw(\R;D_A(\alpha,\infty))$. By density, under the assumption \eqref{eq:assumptionStep1} of this step the trace operator $\tr_0$ extends continuously to a map
\begin{equation}\label{eq:trace_q=1}
\tr_0:F^{s+\alpha}_{p,1}(\R,w_\gamma;X) \cap F^{s}_{p,1}(\R,w_\gamma;D_A(\alpha,\infty))\to D_A(\theta,p).
\end{equation}

{\em Step 2.} Assume $-1<\gamma<p-1$ and $0< s< \frac{1+\gamma}{p} < s+\alpha$. If $s+\alpha<1$, then \eqref{eq:assumptionStep1}  holds and Step 1 applies. So assume $s+\alpha\geq 1$. Since $\frac{1+\gamma}{p}<1$, we can find $\sigma\in (0,1)$ such that $\frac{1+\gamma}{p} <s+\sigma\alpha<1$. Using that (see \eqref{reiteration} and \cite[Theorem 1.15.2]{Tr1})
$$(X,D_A(\alpha,\infty))_{1-\sigma,1} = D_A((1-\sigma)\alpha,1) \hookrightarrow D(A^{(1-\sigma)\alpha}),$$
and the mixed derivative embedding from Theorem \ref{thm:newton} (see also Remark \ref{rem:mixed-der}(i)), we get
\[F^{s+\alpha}_{p,1}(\R,w_\gamma;X) \cap F_{p,1}^s(\R,w_\gamma; D_A(\alpha,\infty)) \hookrightarrow F^{s+ \sigma\alpha}_{p,1}(\R,w_\gamma; D(A^{(1-\sigma)\alpha})).\]
Now let $\tilde X = D(A^{(1-\sigma)\alpha})$ and let $\tilde A$ be the realization of $A$ on $\tilde X$. Then $\tilde A$ is again a positive operator, and $D_{\tilde A}(\sigma\alpha,\infty) = D_A(\alpha,\infty)$ as a consequence of \eqref{Aiso}. For $\tilde \alpha = \sigma \alpha$ we thus have
\begin{align*}
F^{s+\alpha}_{p,1}(\R,w_\gamma;X) \cap F^{s}_{p,1}(\R,w_\gamma;D_A(\alpha,\infty))\hookrightarrow F^{s+\tilde \alpha}_{p,1}(\R,w_\gamma;\tilde X) \cap F^{s}_{p,1}(\R,w_\gamma;D_{\tilde A}(\tilde\alpha,\infty)).
\end{align*}
Since $\frac{1+\gamma}{p} < s+\tilde \alpha <1$, Step 1 applies and shows that $\tr_0$ extends continuously to a map
$$F^{s+\alpha}_{p,1}(\R,w_\gamma;X) \cap F^{s}_{p,1}(\R,w_\gamma;D_A(\alpha,\infty)) \to D_{\tilde A}\Big(s+\tilde \alpha - \frac{1+\gamma}{p},p\Big) = D_A(\theta,p),$$
using again \eqref{Aiso} for the last identity.

{\em Step 3.} Assume $-1<\gamma<p-1$ and $s\leq 0 < \frac{1+\gamma}{p} < s+\alpha$. Then we find $\sigma\in (0,1)$ such that $0 < s+ \sigma \alpha < \frac{1+\gamma}{p}$. Setting $\tilde s = s + \sigma \alpha$ and $\tilde \alpha = (1-\sigma)\alpha$, Theorem \ref{thm:newton} implies
\[F^{s+\alpha}_{p,1}(\R,w_\gamma;X) \cap F^{s}_{p,1}(\R,w_\gamma;D_A(\alpha,\infty)) \hookrightarrow   F^{\tilde s + \tilde \alpha}_{p,1}(\R,w_\gamma;X) \cap F^{\tilde s}_{p,1}(\R,w_\gamma;D_A({\tilde \alpha}, \infty)).\]
Since $0< \tilde s < \frac{1+\gamma}{p} < \tilde s + \tilde \alpha$ and $\theta = \tilde s +\tilde \alpha -\frac{1+\gamma}{p}$, we can apply Step 2 to obtain \eqref{eq:trace_q=1}.

{\em Step 4.} Assume $\gamma\geq p-1$, and that $s< \frac{1+\gamma}{p} < s+\alpha$ are arbitrary. Let $\tilde{s} = s-\frac{\gamma}{p}$. Since $\frac{\gamma}{p} > 0$ and $s - \frac{1+\gamma}{p} = \tilde s - \frac{1}{p}$, the Sobolev embedding from Theorem \ref{thm:SobB} with fixed integrability parameter $p$ gives
\[F^{s+\alpha}_{p,1}(\R,w_\gamma;X) \cap F^{s}_{p,1}(\R,w_\gamma;D_A(\alpha, \infty)) \hookrightarrow F^{\tilde{s}+\alpha}_{p,1}(\R;X) \cap F^{\tilde{s}}_{p,1}(\R;D_A(\alpha, \infty)).\]
Since $\tilde{s}<\frac{1}{p} < \tilde{s}+\alpha$ and $\theta = \tilde s + \alpha - \frac{1}{p}$, we obtain \eqref{eq:trace_q=1} from the Steps 2 and 3.

{\em Step 5.} We finally prove \eqref{trace-cont} by reducing to the case \eqref{eq:trace_q=1}, considered in the previous steps. As in Step 4 we use weighted Sobolev embeddings with fixed integrability $p$. For small $\varepsilon>0$ we set
$$\tilde{s} = s- \frac{\varepsilon}{p}, \qquad \tilde{\gamma} = \gamma-\varepsilon.$$
Since $\frac{\gamma}{p} > \frac{\tilde \gamma}{p}$ and $s - \frac{1+ \gamma}{p} = \tilde s- \frac{1+ \tilde \gamma}{p}$,  Theorem \ref{thm:SobB} implies
\begin{align*}
F^{s + \alpha}_{p,\infty}(\R,w_\gamma;X) \hookrightarrow  F^{\tilde{s} + \alpha}_{p,1}(\R,w_{\tilde \gamma};X), \qquad F^{s}_{p,\infty}(\R,w_\gamma;D_A(\alpha,\infty))  \hookrightarrow  F^{\tilde{s}}_{p,1}(\R,w_{\tilde \gamma};D_A(\alpha,\infty)).
\end{align*}
Combining these embeddings with \eqref{eq:trace_q=1} yields the continuity of
$$\tr_0:F^{s + \alpha}_{p,\infty}(\R,w_\gamma;X)\cap F^{s}_{p,\infty}(\R,w_\gamma;D_A(\alpha,\infty)) \to D_A\Big(\tilde s + \alpha - \frac{1+\tilde \gamma}{p},p\Big) = D_A(\theta,p),$$
and this shows \eqref{trace-cont}.
\end{proof}

To obtain a continuous right-inverse for $\tr_0$ we need extension operators for the half-line $\R_+ = (0,\infty)$. Set
$$W^{m,p}(\R_+,w_\gamma;X) := \{f\in L^p (\R_+,w_\gamma; X)\,:\, \partial_t^k f \in L^p(\R_+,w_\gamma;X) \text{ for every } k\leq m\},$$
where the derivatives are taken in the sense of distributions.
For a Banach space $X$ and $m\in \N_0$, a linear map $E_+:L_{1,\text{loc}}([0,\infty);X)\to L_{1,\text{loc}}(\R;X)$ with $(E_+ f)|_{\R_+} = f$ is called an {\em $m$-extension operator for $\R_+$} if it is bounded from $W^{l,p}(\R_+,w_{\gamma};X)$ to $W^{l,p}(\R,w_{\gamma};X)$ for all $p\in (1,\infty)$, $\gamma \in (-1,p-1)$ and $l\in \{0,\ldots,m\}$.

As in \cite[Theorem 5.19]{AF03}) we define $\textsf{E}_+^m$ and $\textsf{E}_+^{m,k}$  by
\begin{align}
(\textsf{E}^m_+ f)(t) &= \left\{
          \begin{array}{ll}
            f(t), & t> 0, \\
            \sum_{j=1}^{m+1} \lambda_j f(-j t), & t<0,
          \end{array}
        \right. \label{ext-1}
\\ (\textsf{E}_+^{m,k} f)(t) &= \left\{
          \begin{array}{ll}
            f(t), & t> 0, \\
            \sum_{j=1}^{m+1} (-j)^{k} \lambda_j f(-j t), & t<0,
          \end{array}
        \right.\label{ext-2}
\end{align}
where $(\lambda_j)_{j=1}^{m+1}$ is the unique solution of $\sum_{j=1}^{m+1} (-j)^l \lambda_j = 1$ for $l=0,...,m$.

The following extension of \cite[Theorem 5.19]{AF03} is straight forward.

\begin{lemma} \label{extension} \
For every Banach space $X$ and $m \in \N$ one has that $\emph{\textsf{E}}^m_+$ defines an $m$-extension operator for $\R_+$. Moreover, for $k\in \{1,\ldots,m\}$ one has that $\emph{\textsf{E}}_+^{m,k}$ defines an $(m-k)$-extension operator for $\R_+$, and
\[\partial_t^k \emph{\textsf{E}}^m_+  = \emph{\textsf{E}}_{+}^{m,k} \partial_t^k.\]
\end{lemma}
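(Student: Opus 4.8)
The plan is to deduce this from the classical scalar (or rather unweighted, finite-dimensional) statement \cite[Theorem 5.19]{AF03} by two routine observations: that the extension formulas \eqref{ext-1}--\eqref{ext-2} make sense verbatim for $X$-valued functions, and that the weighted $L^p$-bounds follow from a scaling/dilation argument because each summand $f \mapsto f(-j\,\cdot)$ interacts with the power weight $w_\gamma$ only through an explicit constant. First I would fix $p \in (1,\infty)$, $\gamma \in (-1,p-1)$ and $l \le m$, take $f \in W^{l,p}(\R_+,w_\gamma;X)$, and note that $(\textsf{E}_+^m f)|_{\R_+} = f$ is immediate from the definition. On $\R_+$ the $W^{l,p}$-norm of $\textsf{E}_+^m f$ is exactly that of $f$; the work is on $(-\infty,0)$.

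The key computation is that for $t<0$ and $k \le l$, differentiating the lower branch of \eqref{ext-1} gives $\partial_t^k (\textsf{E}_+^m f)(t) = \sum_{j=1}^{m+1} \lambda_j (-j)^k (\partial_t^k f)(-jt)$, which is the lower branch of $\textsf{E}_+^{m,k}(\partial_t^k f)$; this simultaneously proves the commutation identity $\partial_t^k \textsf{E}_+^m = \textsf{E}_+^{m,k}\partial_t^k$ and reduces everything to the $L^p(w_\gamma)$-boundedness of the single maps $f \mapsto g_j$, $g_j(t) = f(-jt)$ for $t<0$. For each such map the substitution $\tau = -jt$ yields
\begin{align*}
\int_{-\infty}^0 \|f(-jt)\|_X^p\, |t|^\gamma\, dt = j^{-1-\gamma}\int_0^\infty \|f(\tau)\|_X^p\, |\tau|^\gamma\, d\tau = j^{-1-\gamma}\,\|f\|_{L^p(\R_+,w_\gamma;X)}^p,
\end{align*}
so $\|g_j\|_{L^p((-\infty,0),w_\gamma;X)} = j^{-(1+\gamma)/p}\|f\|_{L^p(\R_+,w_\gamma;X)}$. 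Summing over $j=1,\dots,m+1$ with the fixed coefficients $\lambda_j$ (respectively $(-j)^k\lambda_j$) and adding the contribution from $\R_+$ gives
\begin{align*}
\|\textsf{E}_+^m f\|_{W^{l,p}(\R,w_\gamma;X)} \le C(m,p,\gamma)\,\|f\|_{W^{l,p}(\R_+,w_\gamma;X)},
\end{align*}
which is the assertion for $\textsf{E}_+^m$; the same estimate applied to $\textsf{E}_+^{m,k}$ (whose lower-branch coefficients are $(-j)^k\lambda_j$, and which requires only $l \le m-k$ many derivatives to be controlled) gives the $(m-k)$-extension property. Here the choice of $(\lambda_j)$ as the solution of $\sum_j (-j)^l\lambda_j = 1$, $l=0,\dots,m$, is exactly what guarantees that the $\R^-$- and $\R^+$-branches match together with their first $m$ derivatives across $t=0$, so that $\textsf{E}_+^m f$ is genuinely $W^{l,p}$ on all of $\R$ (the matching of traces of derivatives up to order $l-1 \le m-1$ is what prevents a singular distributional term at the origin); this is verified as in \cite{AF03} and is where one must be slightly careful, but it is the only non-bookkeeping point.

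I do not expect any real obstacle: the Banach-space-valuedness is cost-free since the argument never uses duality, reflexivity, or any geometry of $X$ (only the triangle inequality and a change of variables in a scalar integral), and the weight enters only via the harmless factor $j^{-(1+\gamma)/p}$, finiteness of which is guaranteed by $\gamma>-1$. The mild point to watch is that $W^{l,p}(\R_+,w_\gamma;X)$ functions, after extension, have matching one-sided derivative traces at $0$ up to order $l-1$ so that the distributional derivatives of $\textsf{E}_+^m f$ on $\R$ have no jump part at the origin; this follows from the defining linear system for $(\lambda_j)$ exactly as in the unweighted scalar case and requires no new idea. Everything else is a direct transcription of \cite[Theorem 5.19]{AF03}.
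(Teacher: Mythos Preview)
Your approach is correct and more hands-on than the paper's. The paper proceeds by density: it first invokes \cite[Theorem 2.1.4]{Tur00} (a mollification and cut-off argument) to show that $\{f|_{\R_+}: f\in C_c^\infty(\R;X)\}$ is dense in $W^{m,p}(\R_+,w_\gamma;X)$ for every $\gamma\in(-1,p-1)$; on such smooth functions the extension formula, the commutation identity, and the absence of any singular contribution at $t=0$ are all immediate, and the norm estimate (identical to your change-of-variables computation) then extends everything by continuity to the full space. Your direct route avoids the external density result, which is a gain, but the point you flag as ``mild'' deserves slightly more than a reference to \cite{AF03}: the linear system for $(\lambda_j)$ ensures the \emph{matching} of one-sided traces $\partial_t^k f(0\pm)$ once these exist, but their \emph{existence} for a general $f\in W^{l,p}(\R_+,w_\gamma;X)$ is where the weight actually enters. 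It follows from H\"older against the dual weight---$\int_0^1 t^{-\gamma/(p-1)}\,dt<\infty$ precisely when $\gamma<p-1$, so $\partial_t^k f\in L^1_{\mathrm{loc}}([0,\infty))$ and $\partial_t^{k-1}f$ is continuous up to $t=0$---but this is the one step where the weighted case is not literally the unweighted one, and it is exactly what the paper's density argument sidesteps. (Incidentally, the factor $j^{-(1+\gamma)/p}$ is finite for any real $\gamma$ and any $j\ge1$; the restriction $\gamma>-1$ plays no role at that point.)
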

\begin{proof} It can be seen by a standard mollification and cut-off argument (using \cite[Theorem 2.1.4]{Tur00}) that $D = \{f|_{\R_+}\,:\; f\in C_c^\infty(\R;X)\}$ is dense in $W^{m,p}(\R_+,w_\gamma;X)$ for every $m\in \N_0$, $p\in (1,\infty)$ and $\gamma\in (-1,p-1)$. As in the proof of \cite[Theorem 5.19]{AF03} one can show that $\textsf{E}^m_+$ and $\textsf{E}_+^{m,k}$ extend continuously as required, and that the asserted identity holds true.
\end{proof}

For a positive operator $A$ on $X$ we set
$$R_{A}:\R_+\to \calL(X),\qquad R_{A}(t) = (1+t A)^{-1}.$$
Given $m,j\in \N$, for an $m$-extension operator $E_+$ for the half-line we define $\ext_{j,A}$ for $x\in X$ by
\begin{equation}\label{eq:extjA}
(\ext_{j,A}x)(t) =  (E_+ R_A^j x)(t),\qquad t \in \R.
\end{equation}
Then it is clear that $\ext_{j,A}$ defines a right-inverse for $\tr_0$, i.e., $\tr_0 \ext_{j,A}x = x$.
The operator $\ext_{j,A}$ depends on $m$ as well, but in all cases below it will be clear which extension $E_+$ is used.

We start with the following regularity result for $\ext_{j,A}$.

\begin{lemma}\label{RI}
Let $p\in (1,\infty)$, $\gamma\in (-1,p-1)$ and $\beta > \frac{1+\gamma}{p}$. For $m \geq \beta +1$, let $E_+$ be  an $m$-extension operator for $\R_+$ of the form \eqref{ext-1} or \eqref{ext-2}. Then for every $j\in \N$, the operator $\emph{\ext}_{j,A}$ defined by \eqref{eq:extjA} maps continuously
$$D_A\Big(\beta-\frac{1+\gamma}{p},p\Big) \to  F_{p,1}^{\theta \beta}(\R,w_\gamma; D(A^{(1-\theta)\beta})), \qquad \theta\in [0,1].$$
\end{lemma}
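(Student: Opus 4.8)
The plan is to reduce the statement, by means of the mixed derivative embedding of Section~\ref{sec:Newton}, to the two endpoint cases $\theta=1$ and $\theta=0$, and to treat these endpoints by direct kernel estimates built on the resolvent description of the real interpolation space and the difference characterization of weighted $F$-spaces. For the setup, write $\delta:=\beta-\tfrac{1+\gamma}{p}>0$, so that the domain space is $D_A(\delta,p)$; substituting $\sigma=1/t$ in \eqref{eq:realinterResolvent} one obtains, for every integer $n>\delta$,
$$\|x\|_{D_A(\delta,p)}^p\;\approx\;\int_0^\infty\big(t^{\,n-\beta}\,\|A^n(1+tA)^{-n}x\|\big)^p\,w_\gamma(t)\,dt .$$
Moreover $\ext_{j,A}x$ restricted to $\R_+$ is the resolvent profile $t\mapsto R_A^j(t)x=(1+tA)^{-j}x$, and on $\R_-$ it is the finite linear combination $t\mapsto\sum_l\lambda_l(1+l|t|A)^{-j}x$ of rescaled copies of the same profile. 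Since $E_+$ is one of the operators $\textsf{E}^m_+$, $\textsf{E}^{m,k}_+$ from \eqref{ext-1}--\eqref{ext-2} with $m\ge\beta+1$, and $\partial_t^k\textsf{E}^m_+=\textsf{E}^{m,k}_+\partial_t^k$ by Lemma~\ref{extension}, every $F$-norm on $\R$ occurring below can be controlled by the corresponding quantity for $R_A^j(\cdot)x$ on the half-line, together with a harmless contribution coming from the differences that straddle $t=0$, the latter being finite thanks to the derivative matching built into the $\lambda_l$.

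Next I would reduce to $\theta\in\{0,1\}$. Suppose we know
$$\ext_{j,A}\colon D_A(\delta,p)\to F^{\beta}_{p,1}(\R,w_\gamma;X),\qquad \ext_{j,A}\colon D_A(\delta,p)\to F^{0}_{p,1}(\R,w_\gamma;D(A^{\beta})),$$
together with the corresponding norm bounds. Then Theorem~\ref{thm:newton}, applied with $X_0=X$, $X_1=D(A^{\beta})$, interpolation exponent $1-\theta$ and $X_{1-\theta}=D(A^{(1-\theta)\beta})$ --- which is of class $J(1-\theta)$ between $X$ and $D(A^{\beta})$ by the moment inequality, hence satisfies \eqref{eq:ordertheta} --- yields
$$F^{\beta}_{p,1}(\R,w_\gamma;X)\cap F^{0}_{p,1}(\R,w_\gamma;D(A^{\beta}))\hookrightarrow F^{\theta\beta}_{p,1}(\R,w_\gamma;D(A^{(1-\theta)\beta})),$$
with norm bounded by a product of the two endpoint norms. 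Composing with the two endpoint maps gives the assertion for all $\theta\in(0,1)$, with $\|\ext_{j,A}x\|\lesssim\|x\|_{D_A(\delta,p)}$, while $\theta=0$ and $\theta=1$ are the cases assumed.

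It then remains to prove the two endpoint cases. For $\theta=1$ I would replace the $F^{\beta}_{p,1}(\R,w_\gamma;X)$-norm by the difference norm of Proposition~\ref{prop:Lpsmoothness} for an integer $M>\beta$ and use \eqref{eq:Fnormq=1}. The $L^p(\R,w_\gamma;X)$-term is handled by the elementary bound $\|(1+tA)^{-j}\|_{\calL(X)}\le C(1+t)^{-j}$, which is integrable against $w_\gamma$ near $0$ since $\gamma>-1$ and near $\infty$ since $\gamma<p-1$ (here $j\ge1$ suffices). In the seminorm term one writes $\Delta_h^M\big[(1+\cdot\,A)^{-j}x\big](t)$ as an $M$-fold integral over $[0,h]^M$ of $\partial_t^M[(1+tA)^{-j}]=c_{j,M}A^M(1+tA)^{-j-M}$, interchanges the order of integration by Fubini, and invokes the Hardy--Young inequality \eqref{eq:HardyYoung} --- in its stated form and in the analogous form with the inner integral $\int_\sigma^\infty$ --- to trade the singularity of the profile at $t=0$ against the positive gain $\delta=\beta-\tfrac{1+\gamma}{p}>0$; this leads exactly to the resolvent integral recorded above, hence to $\|x\|_{D_A(\delta,p)}$. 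The case $\theta=0$ is treated in the same spirit, now estimating $\sum_k\|A^{\beta}S_k(\ext_{j,A}x)(t)\|$ pointwise in $t$ through resolvent expressions, taking the $L^p(\R,w_\gamma)$-norm, and again absorbing the near-zero behaviour via Hardy--Young.

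I expect the main obstacle to be precisely the behaviour at $t=0$: the profile $(1+tA)^{-j}x$ is no smoother than $x$, so its $M$-th difference (equivalently $\partial_t^M$ applied to it) is genuinely singular at the origin, and membership in $F_{p,1}$ can be recovered only by exploiting the \emph{full} $L^p$-strength of $x\in D_A(\delta,p)$; the cruder $D_A(\delta,\infty)$, i.e.\ $L^\infty$-type, estimate loses exactly the borderline exponent. Carrying out the weight and exponent bookkeeping correctly in the Fubini/Hardy--Young step, and checking that the extension across $t=0$ does not spoil it, is where the real work lies, and this is also where the hypotheses $\beta>\tfrac{1+\gamma}{p}$ and $m\ge\beta+1$ enter.
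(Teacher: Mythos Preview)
Your reduction to the endpoints $\theta\in\{0,1\}$ via Theorem~\ref{thm:newton} is valid, but the endpoint $\theta=0$ is a genuine gap. The target there is $F^{0}_{p,1}(\R,w_\gamma;D(A^{\beta}))$, and this space has \emph{no} difference characterization: Proposition~\ref{prop:Lpsmoothness} requires $s>0$. Your fallback, to ``estimate $\sum_k\|A^{\beta}S_k(\ext_{j,A}x)(t)\|$ pointwise in $t$ through resolvent expressions'', does not work as stated: the $S_k$ are convolutions in $t$, so they mix all values of the profile and admit no useful pointwise-in-$t$ bound in terms of resolvent quantities. Getting the microscopic index $q=1$ at smoothness $0$ is precisely the delicate point, and a direct Littlewood--Paley estimate of the resolvent profile is not available here.

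The paper avoids this obstacle by a different mechanism. It does not aim at the two endpoints at all; instead it estimates integer Sobolev norms with a \emph{shifted} weight: for $\theta\beta$ in a ``good'' interval $(k-1+\tfrac{1+\gamma}{p},k)$ one sets $\tilde\gamma=\gamma+p(k-\theta\beta)\in(-1,p-1)$ and bounds $\|\ext_{j,A}x\|_{W^{k,p}(\R,w_{\tilde\gamma};D(A^{(1-\theta)\beta}))}$ directly from the resolvent form \eqref{eq:realinterResolvent} of $D_A(\delta,p)$. The crucial step is then the weighted Sobolev embedding of Theorem~\ref{thm:SobB},
\[
W^{k,p}(\R,w_{\tilde\gamma})\hookrightarrow F^{k}_{p,\infty}(\R,w_{\tilde\gamma})\hookrightarrow F^{\theta\beta}_{p,1}(\R,w_\gamma),
\]
which trades the excess weight $\tilde\gamma>\gamma$ for the microscopic improvement from $q=\infty$ to $q=1$. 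The remaining ``bad'' intervals, including $\theta\beta\in[0,\tfrac{1+\gamma}{p}]$, are then filled in by combining two good-interval estimates via Theorem~\ref{thm:newton} and a further application of Theorem~\ref{thm:SobB} with a perturbed exponent. In short, the weight flexibility --- not a direct kernel estimate --- is what produces $q=1$; your proposal does not invoke this, and without it the $\theta=0$ case (hence the full range $\theta\in[0,1]$) is not reached.
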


In the proof we will use Sobolev embeddings of weighted  $W^{k,p}$-spaces, $k\in \N_0$, into $F$-spaces. This is where we need an $A_p$-condition on the weight, which results in $\gamma < p-1$.

\begin{proof}
To prove the desired mapping properties of $\ext_{j,A}$ we distinguish between different values of $\theta\beta$ and $\beta$.

\emph{Step 1.} Assume there is  $k\in \N$ such that $k-1 + \frac{1+\gamma}{p} < \theta\beta < k$. Define $\tilde \gamma = \gamma+ p(k-\theta\beta)$. Then for $x\in D_A(\beta-\frac{1+\gamma}{p},p)$ we have
\begin{align*}
\|\ext_{j,A} x\|_{W^{k,p}(\R,w_{\tilde \gamma}; D(A^{(1-\theta)\beta}))} &\leq C\sum_{l=0}^k \|A^l R_A^{j+l}x\|_{L^p(\R_+,w_{\tilde \gamma}; D(A^{(1-\theta)\beta}))}.
\end{align*}
For fixed $0\leq l\leq k$ we estimate, setting $y = A^{(1-\theta)\beta}x$ and using $\|(\sigma+A)^{-j}\|_{\calL(X)} \leq \frac{C}{(1+\sigma)^j}$,
\begin{align}
\|A^{l} & R_A^{j+l} x\|_{L^p(\R_+,w_{\tilde \gamma}; D(A^{(1-\theta)\beta}))}^p \notag\\
& = \int_0^{\infty}  \|(t^{-1}+A)^{-j} (A(t^{-1}+A)^{-1})^ly\|^p t^{\tilde \gamma-(l+j)p+1} \, \frac{dt}{t} \notag\\
& = \int_0^{\infty}  \|(\sigma+A)^{-j} (A(\sigma+A)^{-1})^ly\|^p \sigma^{-\tilde \gamma+(l+j)p-1} \, \frac{d\sigma}{\sigma} \notag\\
& \leq C\|y\|^p \int_0^1\sigma^{-\tilde \gamma+(l+j)p-1} \, \frac{d\sigma}{\sigma}  + C \int_1^{\infty}   \sigma^{(l-\frac{1+\tilde \gamma}{p})p}\| (A(\sigma+A)^{-1})^ly\|^p  \, \frac{d\sigma}{\sigma}.\label{eq:m666}
\end{align}
Since $-\tilde \gamma+(l+j)p-1 > 0$ and $(1-\theta)\beta < \beta -\frac{1+\gamma}{p}$, the first summand can be estimated by $C\|y\|^p \leq C\|x\|_{D_A(\beta-\frac{1+\gamma}{p},p)}^p$. In case $l=0$ the second summand in \eqref{eq:m666} is again estimated by $C\|y\|^p$. For $1\leq l\leq k$
we will use the equivalent norm for $D_A(\eta,p)$, given by \eqref{eq:realinterResolvent}, with $\eta = l-\frac{1+\tilde \gamma}{p} = l-k + \theta\beta  -\frac{1+\gamma}{p}$. Hence the second summand in \eqref{eq:m666} can be estimated by
$$C\|y\|_{D_A(l-k+ \theta\beta  -\frac{1+\gamma}{p},p)}^p \leq C \|x\|_{D_A(\beta-\frac{1+\gamma}{p},p)}^p,$$
also using the mapping properties of $A^{(1-\theta)\beta}$ from \eqref{Aiso}. Therefore
\begin{align*}
\|\ext_{j,A} x\|_{W^{k,p}(\R,w_{\tilde \gamma}; D(A^{(1-\theta)\beta}))} &\leq C\|x\|_{D_A(\beta-\frac{1+\gamma}{p},p)}.\end{align*}
Using $\tilde \gamma \in (-1,p-1)$, $\tilde \gamma > \gamma$ and $k-\frac{1+\tilde \gamma}{p} = \theta\beta - \frac{1+\gamma}{p}$, the elementary embedding \eqref{eq:TriebelLizorkinW} and the Sobolev embedding from Theorem \ref{thm:SobB} imply that
\begin{align*}
W^{k,p}(\R,w_{\tilde \gamma}; D(A^{(1-\theta)\beta}))  \hookrightarrow F_{p,\infty}^{k}(\R,w_{\tilde \gamma};D(A^{(1-\theta)\beta}))\hookrightarrow F_{p,1}^{\theta\beta }(\R,w_\gamma;D(A^{(1-\theta)\beta})).
\end{align*}
Hence $\ext_{j,A}$ maps continuously as asserted if $k-1 + \frac{1+\gamma}{p} < \theta \beta < k$ for some $k\in \N$.

\emph{Step 2.} Assume  $0\leq \theta \beta \leq \frac{1+\gamma}{p}$. Arguing as in Step 1, we obtain
\begin{equation}\label{eq:m667}
\|\ext_{j,A} x\|_{F_{p,\infty}^0(\R,w_\gamma; D(A^\beta))} \leq C \|\ext_{j,A} x\|_{L^p(\R,w_\gamma;D(A^\beta))} \leq C \|x\|_{D_A(\beta-\frac{1+\gamma}{p},p)}.
\end{equation}
Choose $\e > 0$  with $\frac{1+\gamma}{p} + \e < \min\{\beta,1\}$. Using Step 1 with $\theta = \big(\frac{1+\gamma}{p}+\e\big)/\beta < 1$, we obtain that $\text{ext}_{j,A}$ maps $D_A(\beta-\frac{1+\gamma}{p},p)$ into $F_{p,1}^{\frac{1+\gamma}{p}+\e}(\R,w_\gamma; D(A^{\beta - \frac{1+\gamma}{p} -\e}))$. Together with \eqref{eq:m667} and an elementary embedding it maps into
\begin{equation*}
 F_{p,\infty}^{\frac{1+\gamma}{p}+\e}(\R,w_\gamma; D(A^{\beta - \frac{1+\gamma}{p} -\e})) \cap F_{p,\infty}^0(\R,w_\gamma; D(A^\beta)).
\end{equation*}
By Theorem \ref{thm:newton}, this intersection space embeds into $F_{p,\infty}^{\theta\beta}(\R,w_\gamma; D(A^{(1-\theta)\beta}))$, and therefore
\begin{equation}\label{eq:m668}
\ext_{j,A}:D_A\Big(\beta-\frac{1+\gamma}{p},p\Big)\to F_{p,\infty}^{\theta\beta}(\R,w_\gamma; D(A^{(1-\theta)\beta})).
\end{equation}
We improve this mapping property with a weighted Sobolev embedding as follows. For small $\e > 0$ we set $\tilde \beta  = \beta + \e/p$, $\tilde \theta = \frac{\theta\beta+\e/p}{\beta+\e/p}$ and $\tilde \gamma = \gamma + \e$. Then $\tilde \theta \tilde \beta \leq \frac{1+\tilde \gamma}{p}$. Thus \eqref{eq:m668} and Theorem \ref{thm:SobB} imply that $\text{ext}_{j,A}$ maps $D_A(\beta- \frac{1+\gamma}{p},p) = D_A(\tilde \beta- \frac{1+\tilde \gamma}{p},p)$ into
$$F_{p,\infty}^{\tilde \theta \tilde \beta}(\R, w_{\tilde \gamma}; D(A^{(1-\tilde \theta)\tilde \beta})) = F_{p,\infty}^{\theta \beta + \e/p}(\R, w_{\gamma+\e}; D(A^{(1-\theta)\beta}))    \hookrightarrow F_{p,1}^{\theta\beta}(\R,w_\gamma; D(A^{(1-\theta)\beta})).$$

\emph{Step 3.} It remains to deal with the case $k \leq \theta \beta \leq k + \frac{1+\gamma}{p}$ for some $k\in \N$. Let us first assume that there is $k_0\in \N$ such that $k_0 + \frac{1+\gamma}{p} < \beta < k_0$, the other case will be treated in the next step. Then there are $\theta_1,\theta_2  \in (0,1)$ such that $\theta_1 < \theta < \theta_2$ and
$$k -1 + \frac{1+\gamma}{p} < \theta_1 \beta < k, \qquad k+\frac{1+\gamma}{p} < \theta_2 \beta < k+1.$$ By Step 1, the operator $\text{ext}_{j,A}$ maps $D_A(\beta-\frac{1+\gamma}{p},p)$ into
\begin{equation*}
F_{p,1}^{\theta_2 \beta}(\R,w_\gamma;D(A^{(1-\theta_2)\beta})) \cap F_{p,1}^{\theta_1 \beta}(\R,w_\gamma;D(A^{(1-\theta_1)\beta})),
\end{equation*}
and Theorem \ref{thm:newton} implies that this space embeds into $F_{p,1}^{\theta \beta}(\R,w_\gamma;D(A^{(1-\theta)\beta}))$.

\emph{Step 4.} Finally, assume $k \leq \theta \beta \leq k + \frac{1+\gamma}{p}$ for some $k\in \N$ and $k_0 \leq \beta \leq  k_0 + \frac{1+\gamma}{p}$ for some $k_0\in \N$. Take $\tilde \beta \in (0,1)$ such that $k_0 + \frac{1+\gamma}{p} < \beta + \tilde \beta < k_0+1$. Then the Steps 1-3 apply to $\beta+\tilde \beta$ and we obtain
$$\ext_{j,A}: D_A\Big(\beta + \tilde \beta  -\frac{1+\gamma}{p},p\Big) \to F_{p,1}^{\tau(\beta +\tilde \beta)}(\R,w_\gamma; D(A^{(1-\tau)(\beta + \tilde \beta)})),\qquad \tau\in [0,1].$$
The choice $\tau = \frac{\theta\beta}{\beta+\tilde \beta}$ leads to $F_{p,1}^{\theta \beta }(\R,w_\gamma; D(A^{(1-\theta)\beta +\tilde \beta}))$ on the right-hand side. Applying $A^{-\tilde \beta }$ and using \eqref{Aiso} yields the continuity of $\ext_{j,A}$ as asserted.\end{proof}

This result can be used as follows to obtain a right-inverse for $\tr_0$ as required for \eqref{intro-Ftrace}.

\begin{proposition}\label{prop:extgeneral}
Let $A$ be a positive operator on $X$, let $p\in (1,\infty)$ and $\gamma \in (-1,p-1)$. Suppose that $s\in \R$ and $\alpha > 0$ satisfy $s < \frac{1+\gamma}{p} <  s+\alpha$. Then for $m \geq \max\{s+\alpha+1, \alpha+1\}$ there is an $m$-extension operator $E_+$ for $\R_+$, which is either of the form \eqref{ext-1} or \eqref{ext-2}, such that for any integer $j>\frac{\gamma+1}{p} -s$ the operator $\emph{\text{ext}}_{j,A}$ defined by  \eqref{eq:extjA} maps continuously
\begin{equation*}
D_A\Big(s+\alpha-\frac{1+\gamma}{p},p\Big) \to  F_{p,1}^{s+\alpha}(\R,w_\gamma; X) \cap F_{p,1}^{s}(\R,w_\gamma; D_A(\alpha,1)).
\end{equation*}
\end{proposition}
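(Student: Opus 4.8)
The plan is to exhibit the right-inverse explicitly as $\ext_{j,A}$ for a suitable extension operator $E_+$ and a large enough power $j$, and to verify the two required mapping properties separately, each time reducing to Lemma~\ref{RI}. First I would fix $m \geq \max\{s+\alpha+1,\alpha+1\}$ and choose the Sobolev-type extension operator: if $s+\alpha$ (equivalently, the relevant smoothness exponents appearing below) is such that an ordinary $W^{k,p}$-bound suffices, take $E_+ = \textsf{E}^m_+$ from \eqref{ext-1}; in general one takes $E_+$ of the form \eqref{ext-1} or, when derivatives have to be moved through, \eqref{ext-2}, using the commutation identity $\partial_t^k \textsf{E}^m_+ = \textsf{E}^{m,k}_+\partial_t^k$ from Lemma~\ref{extension}. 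The condition $m \geq s+\alpha+1$ guarantees $E_+$ is an $(m)$-extension, hence in particular an $l$-extension for all $l$ up to the smoothness needed in either target space; the condition $j > \frac{\gamma+1}{p}-s$ will be exactly what makes the low-frequency integrals $\int_0^1 \sigma^{\cdots}\,\frac{d\sigma}{\sigma}$ in the style of \eqref{eq:m666} converge when the outer smoothness is the small number $s$ (which may be negative). Recall also that $\tr_0 \ext_{j,A} x = x$ is automatic from $(E_+f)|_{\R_+}=f$ and $R_A^j x|_{t=0}=x$, so only boundedness is at issue.

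For the component $F^{s+\alpha}_{p,1}(\R,w_\gamma;X)$: here the inner space is just $X=D(A^0)$, so I would apply Lemma~\ref{RI} with $\beta = s+\alpha$ (note $\beta > \frac{1+\gamma}{p}$ by hypothesis, and $m \geq \beta+1$) and $\theta = 0$, which gives
\[
\ext_{j,A} : D_A\Big(s+\alpha-\tfrac{1+\gamma}{p},p\Big) \to F^{s+\alpha}_{p,1}(\R,w_\gamma;X)
\]
directly. For the component $F^{s}_{p,1}(\R,w_\gamma;D_A(\alpha,1))$: the subtlety is that the target inner space is $D_A(\alpha,1)$, not a fractional power domain $D(A^{(1-\theta)\beta})$ as produced by Lemma~\ref{RI}. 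I would again set $\beta = s+\alpha$ and choose $\theta\in(0,1)$ with $(1-\theta)\beta = (1-\theta)(s+\alpha)$ slightly exceeding $\alpha$, say $(1-\theta)(s+\alpha) = \alpha + \eta$ for small $\eta>0$ (possible since $s+\alpha > \alpha$ forces $1-\alpha/(s+\alpha)>0$; and also $s+\alpha > \frac{1+\gamma}{p} > s$ gives room). Lemma~\ref{RI} with this $\theta$ yields $\ext_{j,A}$ into $F^{\theta(s+\alpha)}_{p,1}(\R,w_\gamma; D(A^{\alpha+\eta}))$. Since $\theta(s+\alpha) = (s+\alpha)-(\alpha+\eta) = s-\eta < s$ and $D(A^{\alpha+\eta}) \hookrightarrow D_A(\alpha,1)$ (indeed $D(A^{\alpha+\eta})\hookrightarrow D_A(\alpha+\eta,\infty)\hookrightarrow D_A(\alpha,1)$ because a larger fractional order dominates, using \eqref{reiteration} and the ordering of interpolation spaces), the embedding $F^{\theta(s+\alpha)}_{p,1}\hookrightarrow F^{s}_{p,1}$ is backwards — so instead I would apply Lemma~\ref{RI} with $(1-\theta)(s+\alpha)$ chosen to equal $\alpha$ on the nose: pick $\theta = s/(s+\alpha)$ when $s>0$, giving target $F^{s}_{p,1}(\R,w_\gamma;D(A^\alpha))$ and then $D(A^\alpha)\hookrightarrow D_A(\alpha,1)$ fails in general, so the clean route is: apply Lemma~\ref{RI} with $\beta=s+\alpha$, $\theta$ chosen so that the \emph{smoothness} output is exactly $s$, i.e. $\theta(s+\alpha)=s$, hence $(1-\theta)(s+\alpha)=\alpha$ and the inner space is $D(A^\alpha)$; then upgrade $D(A^\alpha)$ to $D_A(\alpha,1)$ by re-running Lemma~\ref{RI}'s proof observing that the estimate there, which controls $\|\ext_{j,A}x\|$ in a $W^{k,p}$ or resolvent-integral norm, in fact lands in $D_A(\alpha,1)$ because the equivalent norm \eqref{eq:realinterResolvent} with $m$-fold resolvents and $L^1$-Dini integral is exactly what the Hardy–Young estimate produces. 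Concretely, I would revisit Step~1 of the proof of Lemma~\ref{RI} and note that the second summand in \eqref{eq:m666}, with $l$ chosen at the endpoint, is an $L^p$-in-$\sigma$ norm of $(A(\sigma+A)^{-1})^l y$ against $\sigma^{(\eta')p}$ which is precisely the $D_A$-norm with the correct index; tracking constants gives boundedness into $F^{s}_{p,1}(\R,w_\gamma;D_A(\alpha,1))$.

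The main obstacle I anticipate is this last point — obtaining the \emph{sharp microscopic parameter} $D_A(\alpha,1)$ in the inner scale, rather than a fractional power domain $D(A^\alpha)$ (which is only of class $J(\theta)\cap K(\theta)$, hence strictly between $D_A(\alpha,1)$ and $D_A(\alpha,\infty)$). The resolution is exactly the mixed-derivative embedding of Theorem~\ref{thm:newton} combined with Remark~\ref{rem:mixed-der}(i): having landed $\ext_{j,A}$ in $F^{s+\alpha}_{p,1}(\R,w_\gamma;X) \cap F^{s'}_{p,1}(\R,w_\gamma;D(A^{\delta}))$ for two nearby auxiliary exponents straddling $(s,\alpha)$ — which Lemma~\ref{RI} provides for a whole one-parameter family of $(\theta,(1-\theta)\beta)$ — Theorem~\ref{thm:newton} (in the form $F^{s+\alpha}_{p,1}(X)\cap F^{s}_{p,1}(D_A(\alpha,\infty)) \hookrightarrow F^{s+(1-\sigma)\alpha}_{p,1}(D_A(\sigma\alpha,1))$, and run in reverse by choosing the auxiliary spaces as the outer and inner corners) upgrades the fractional power domain to the minimal interpolation space $D_A(\alpha,1)$. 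One must check that the weight, integrability and microscopic parameters satisfy the degenerate relations in Theorem~\ref{thm:newton} with $p_0=p_1=p$, $q_0=q_1=q=1$, $w_0=w_1=w_\gamma$, which is immediate. After that, intersecting the two established mapping properties gives the claim; density of $\Schw(\R;D_A(\alpha,1))$ (as in \cite[Lemma~3.8]{MeyVer1}) is not even needed here since $\ext_{j,A}$ is defined and bounded on all of $D_A(s+\alpha-\frac{1+\gamma}{p},p)$ outright.
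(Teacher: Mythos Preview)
Your overall strategy matches the paper's: invoke Lemma~\ref{RI} to land $\ext_{j,A}$ in $F^{s+\alpha}_{p,1}(\R,w_\gamma;X)$ and in $F^{s}_{p,1}(\R,w_\gamma;D(A^\alpha))$, then upgrade $D(A^\alpha)$ to $D_A(\alpha,1)$ via Theorem~\ref{thm:newton}. Two points need correction.

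First, a slip: for the component $F^{s+\alpha}_{p,1}(\R,w_\gamma;X)$ you want $\theta=1$ in Lemma~\ref{RI}, not $\theta=0$ (with $\theta=0$ the lemma produces $F^{0}_{p,1}(\R,w_\gamma;D(A^{s+\alpha}))$).

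Second, and more seriously, your argument for the component $F^{s}_{p,1}(\R,w_\gamma;D(A^\alpha))$ only works when $s>0$: the choice $\theta=s/(s+\alpha)$ requires $\theta\in[0,1]$, and Lemma~\ref{RI} never produces a negative outer smoothness exponent. But the hypothesis only says $s<\frac{1+\gamma}{p}$, so $s$ may be zero or negative (even very negative). You mention the commutation identity $\partial_t^k\textsf{E}^m_+=\textsf{E}^{m,k}_+\partial_t^k$ in passing, but you never actually deploy it, and it is precisely the missing ingredient. The paper splits into two sub-cases. If $\frac{1+\gamma}{p}-1<s<0$, one shifts the weight: set $\tilde\gamma=\gamma-sp\in(-1,p-1)$, apply Lemma~\ref{RI} with $\beta=\alpha>\frac{1+\tilde\gamma}{p}$ and $\theta=0$ to land in $F^{0}_{p,1}(\R,w_{\tilde\gamma};D(A^\alpha))$, then use Theorem~\ref{thm:SobB} (the weighted Sobolev embedding with fixed $p$) to descend to $F^{s}_{p,1}(\R,w_\gamma;D(A^\alpha))$. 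If $s\le\frac{1+\gamma}{p}-1$, one chooses $k\in\N$ with $\frac{1+\gamma}{p}-1<s+k\le\frac{1+\gamma}{p}$, takes $E_+=\textsf{E}^{m+k,k}_+$, and uses the commutation identity to write $\ext_{j,A}x=\textsf{E}^{m+k,k}_+\partial_t^k R_A^{j-k}x$ (up to a constant), reducing to the previous case with $(s,\alpha)$ replaced by $(s+k,\alpha-k)$; here the condition $j>\frac{1+\gamma}{p}-s$ ensures $j-k\ge 1$. Without this reduction your proof is incomplete for $s\le 0$.

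Finally, your upgrade to $D_A(\alpha,1)$ via Theorem~\ref{thm:newton} is correct in spirit, but note that to apply it you need $\ext_{j,A}$ to map into $F^{s-\varepsilon}_{p,1}(\R,w_\gamma;D(A^{\alpha+\varepsilon}))$ for some $\varepsilon>0$ --- that is, you need the $D(A^\alpha)$-mapping property established not just at $(s,\alpha)$ but at the perturbed pair $(s-\varepsilon,\alpha+\varepsilon)$, which again forces you through the $s\le 0$ analysis above.
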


\begin{proof} \emph{Step 1.} In this step, let $E_+$ be an $m$-extension operator of the form \eqref{ext-1} or \eqref{ext-2}, and let $j\in \N$ be arbitrary. For any value of $s$, Lemma \ref{RI} with $\beta = s+\alpha$ and $\theta = 1$ yields
\begin{equation}\label{eq:m669}
\text{ext}_{j,A}:D_A\Big(s+\alpha-\frac{1+\gamma}{p},p\Big)\to F_{p,1}^{s+\alpha}(\R,w_\gamma; X).
\end{equation}
Next, for $s > \frac{1+\gamma}{p} -1$ we prove
\begin{equation}\label{eq:m671}
\text{ext}_{j,A}:D_A\Big(s+\alpha-\frac{1+\gamma}{p},p\Big)\to F_{p,1}^{s}(\R,w_\gamma; D(A^\alpha)).
\end{equation}
Here we distinguish two cases. For $s \geq  0$, \eqref{eq:m671} follows from Lemma \ref{RI}, applied with $\beta = s+\alpha$ and $\theta = \frac{s}{s+\alpha}$. Assume $\frac{1+\gamma}{p} -1< s< 0$.  Then $\tilde \gamma = \gamma-sp\in (-1,p-1)$. Since $s+\alpha-\frac{1+\gamma}{p} = \alpha-\frac{1+\tilde \gamma}{p}$, it follows from Lemma \ref{RI}, applied with $\beta = \alpha > \frac{1+\tilde \gamma}{p}$ and $\theta = 0$, that $\text{ext}_{j,A}$ maps into $F_{p,1}^{0}(\R,w_{\tilde \gamma}; D(A^{\alpha})).$ Since $\tilde \gamma > \gamma$, Theorem \ref{thm:SobB} yields that this space embeds into $F_{p,1}^{s}(\R,w_\gamma; D(A^{\alpha}))$, and \eqref{eq:m671} follows.

\emph{Step 2.} In case $s \leq \frac{1+\gamma}{p} -1$ we prove \eqref{eq:m671} for a special choice of $E_+$, which is as follows. We choose $k\in \N$ such that $\frac{1+\gamma}{p} -1<s +k \leq \frac{1+\gamma}{p} $. Take the $(m+k)$-extension operator $\textsf{E}_+^{m+k}$ from \eqref{ext-1} and the corresponding $m$-extension operator $\textsf{E}_+^{m+k,k}$ from \eqref{ext-2}, such that $\partial_t^k \textsf{E}_{+}^{m+k}  = \textsf{E}_{+}^{m+k,k}\partial_t^k$ by Lemma \ref{extension}. We define $\text{ext}_{j,A}$ as in \eqref{eq:extjA} with $E_+ = \textsf{E}_+^{m+k,k}$ and arbitrary $j\geq k+1 > \frac{1+\gamma}{p}-s$. This allows to estimate
\begin{align}
\|\text{ext}_{j,A} x\|_{F_{p,1}^{s}(\R,w_\gamma; D(A^{\alpha}))} & = C\|\textsf{E}_+^{m+k,k} (\partial_t^k R_A^{j-k} x)\|_{F_{p,1}^{s}(\R,w_\gamma; D(A^{\alpha-k}))} \nonumber\\
& = C\|\partial^k_t  (\textsf{E}_{+}^{m+k} R_A^{j-k}  x)\|_{F_{p,1}^{s}(\R,w_\gamma; D(A^{\alpha-k}))}\nonumber
\\ & \leq C\|\textsf{E}_{+}^{m+k} R_A^{j-k} x\|_{F_{p,1}^{s+k}(\R,w_\gamma; D(A^{\alpha-k}))}, \label{ext-bla}
\end{align}
using \cite[Proposition 3.10]{MeyVer1} in the last line. Now set $\tilde m = m+k$, $\tilde s = s+k$ and $\tilde \alpha = \alpha -k$, such that $\tilde m \geq \tilde s +\tilde \alpha +1$, $\tilde s + \tilde \alpha  > \frac{1+\gamma}{p}$ and $\tilde \alpha > 0$. Since $\tilde s >\frac{1+\gamma}{p} -1$, we may apply \eqref{eq:m671} with $\tilde m$, $\tilde s$ and $\tilde \alpha$ to $\textsf{E}_{+}^{m+k} R_A^{j-k}$, to the result
\begin{align*}
\|\textsf{E}_{+}^{m+k} R_A^{j-k} x\|_{F_{p,1}^{s+k}(\R,w_\gamma; D(A^{\alpha-k}))} = \|\textsf{E}_{+}^{\tilde m} R_A^{j-k}x\|_{F_{p,1}^{\tilde s }(\R,w_\gamma; D(A^{\tilde \alpha}))} \leq C\|x\|_{D_A(s+\alpha -\frac{1+\gamma}{p},p)}.
\end{align*}
Combining this with \eqref{ext-bla} gives \eqref{eq:m671} in case $s \leq \frac{1+\gamma}{p} -1$ for $\text{ext}_{j,A}$ as defined above. Together with Step 1, we conclude that $\text{ext}_{j,A}$ satisfies \eqref{eq:m669} and \eqref{eq:m671} for any $s$.

\emph{Step 3.}
It remains to improve the space $D(A^\alpha)$ on the right-hand side of \eqref{eq:m671} to $D_{A}(\alpha,1)$.
From \eqref{eq:m671} we see that $\ext_{j,A}$ maps $D_A(s+\alpha-\frac{1+\gamma}{p},p)$ into
$$F_{p,1}^{s-\varepsilon}(\R,w_{\gamma}; D(A^{\alpha+\varepsilon})),$$
where $\varepsilon>0$ is small.
Now, together with \eqref{eq:m669} we obtain that $\ext_{j,A}$ maps into
$$ F_{p,1}^{s+\alpha}(\R,w_{\gamma}; X) \cap F_{p,1}^{s-\e}(\R,w_{ \gamma}; D(A^{\alpha + \e})).$$
By Theorem \ref{thm:newton}, this intersection space embeds into $F_{p,1}^{s}(\R,w_{ \gamma}; D_A(\alpha,1)).$
\end{proof}

\begin{remark}\label{remark:universal}
In case $s>\frac{1+\gamma}{p}-1$, Lemma \ref{RI} and Step 1 of the proof above also apply to the total extension operator $\mathcal E_+$ for $\R_+$ from \cite[Theorem 5.21]{AF03} and $j=1$. Hence, in this case
$$(\ext_{A}x)(t) = \mathcal E_+ (1+tA)^{-1} x, \qquad t \in \R,$$ defines a continuous right-inverse for $\tr_0$ which only depends on $A$.
\end{remark}

Theorem \ref{thm:F-embed} and Proposition \ref{prop:extgeneral} yield the characterization \eqref{intro-Ftrace} as asserted in Theorem \ref{thm:main3}. The corresponding result  \eqref{intro-Btrace} for the $B$-spaces will be a consequence of the case $p=q$ and real interpolation.


\begin{proposition} \label{prop:B-case-inter} In the situation of Theorem \ref{thm:main3}, for all $q \in [1,\infty]$ we have
\begin{equation}\label{xxx1}
 \emph{\text{tr}}_0 \left (B^{s+\alpha}_{p,q}(\R,w_\gamma;X) \cap B^{s}_{p,q}(\R,w_\gamma;D_A(\alpha,r))\right) = D_A\Big(s+\alpha - \frac{1+\gamma}{p},q\Big).
\end{equation}
\end{proposition}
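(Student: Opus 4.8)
The plan is to deduce \eqref{xxx1} from the already-established $F$-space trace result \eqref{intro-Ftrace} by real interpolation, exploiting the fact that Besov spaces are real interpolation spaces of Triebel–Lizorkin spaces in the smoothness parameter. First I would reduce to a statement about the endpoint case $p=q$: once we know that $\tr_0$ maps $B^{s+\alpha}_{p,p}(\R,w_\gamma;X)\cap B^{s}_{p,p}(\R,w_\gamma;D_A(\alpha,r))$ onto $D_A(\theta,p)$ with $\theta=s+\alpha-\frac{1+\gamma}{p}$, the general $q$ should follow by interpolating two such statements with different smoothness exponents. Note $B^s_{p,p}=F^s_{p,p}$, so the $p=q$ case is literally a special case of \eqref{intro-Ftrace} (take $q=p$ in the outer scale and $r$ arbitrary in the inner scale); that gives the base case for free.

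Next I would set up the interpolation. Fix two exponents $s_0<\frac{1+\gamma}{p}-\alpha+\theta_0$ style — concretely, pick $\alpha_0,\alpha_1>0$ with $s<\frac{1+\gamma}{p}<s+\alpha_i$ and corresponding $\theta_i = s+\alpha_i-\frac{1+\gamma}{p}$, arranged so that a real interpolation with parameter $\eta\in(0,1)$ and exponent $q$ between the pair indexed by $0$ and $1$ recovers smoothness $s+\alpha$ in the outer scale and $\theta$ on the target. Since $D_A(\alpha,r)$ is fixed we actually only need to move the outer smoothness: choose $s_0<s<s_1$ with all four of $s_i<\frac{1+\gamma}{p}<s_i+\alpha$ valid, so $\theta_i=s_i+\alpha-\frac{1+\gamma}{p}$ satisfy $\theta_0<\theta<\theta_1$, and $s=(1-\eta)s_0+\eta s_1$ gives $\theta=(1-\eta)\theta_0+\eta\theta_1$. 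Then $\tr_0$ maps $F^{s_i+\alpha}_{p,p}(\R,w_\gamma;X)\cap F^{s_i}_{p,p}(\R,w_\gamma;D_A(\alpha,r))$ onto $D_A(\theta_i,p)$ by \eqref{intro-Ftrace}. Applying the real interpolation functor $(\cdot,\cdot)_{\eta,q}$ to the right-inverse and to the trace map, using that $\tr_0$ is a retraction (it has a common right-inverse, e.g. the $\ext_{j,A}$ of Proposition \ref{prop:extgeneral} works simultaneously — or one just interpolates the bounded maps), one gets that $\tr_0$ is a retraction from $\big(F^{s_0+\alpha}_{p,p}\cap F^{s_0}_{p,p},\,F^{s_1+\alpha}_{p,p}\cap F^{s_1}_{p,p}\big)_{\eta,q}$ onto $(D_A(\theta_0,p),D_A(\theta_1,p))_{\eta,q}$. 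The target interpolates to $D_A(\theta,q)$ by reiteration \eqref{reiteration} (writing each $D_A(\theta_i,p)=D_A(\theta_i,p)$ and interpolating in the first parameter). The source must be identified with $B^{s+\alpha}_{p,q}(\R,w_\gamma;X)\cap B^s_{p,q}(\R,w_\gamma;D_A(\alpha,r))$.

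That last identification is the main obstacle: one needs
\[
\big(F^{s_0+\alpha}_{p,p}(\R,w_\gamma;X)\cap F^{s_0}_{p,p}(\R,w_\gamma;D_A(\alpha,r)),\; F^{s_1+\alpha}_{p,p}(\R,w_\gamma;X)\cap F^{s_1}_{p,p}(\R,w_\gamma;D_A(\alpha,r))\big)_{\eta,q}
= B^{s+\alpha}_{p,q}(\R,w_\gamma;X)\cap B^s_{p,q}(\R,w_\gamma;D_A(\alpha,r)),
\]
i.e. real interpolation commutes with this particular intersection. For a single Besov/Triebel–Lizorkin scale one has $(F^{t_0}_{p,p}(w_\gamma;Y),F^{t_1}_{p,p}(w_\gamma;Y))_{\eta,q}=B^t_{p,q}(w_\gamma;Y)$ (the weighted, vector-valued analogue of \cite[Theorem 2.4.1]{Tr1}; this is standard for $A_p$ weights and arbitrary Banach space $Y$ since it only uses the Littlewood–Paley square-function structure and the $\ell^q$-interpolation identity $(\ell^{q_0}_{t_0},\ell^{q_1}_{t_1})_{\eta,q}=\ell^q_t$), but interpolation of the intersection requires an argument. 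Following \cite{MS11}, the clean way is operator-theoretic: realize both $F^{s_i+\alpha}_{p,p}\cap F^{s_i}_{p,p}$ as the domain of a suitable operator built from the ``time-derivative'' operator $G$ (a concrete positive operator whose fractional powers generate the outer smoothness scale) and $A$ acting in the inner variable; more precisely the intersection space is $D(G^{s_i})\cap D(G^{s_i/\alpha\cdot\alpha}\text{-twist})$ — concretely one checks $F^{s+\alpha}_{p,p}(\R,w_\gamma;X)\cap F^{s}_{p,p}(\R,w_\gamma;D_A(\alpha,r))$ is the domain of an operator of the form $G^{s}(\mathbf{1}+ (G^{-1})^{?}\,\text{-}\,A^{1/\alpha})$-type acting on a base space, and then invoke the Da Prato–Grisvard theorem on sums of commuting operators together with its interpolation consequence (as in \cite[Proposition 1.1 / proof of Theorem 4.2]{MS11}) to get that $(\cdot,\cdot)_{\eta,q}$ of the two domains equals the domain of the correspondingly interpolated operator, which is precisely the asserted Besov intersection. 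The technical points to check are: that $G$ (with the power weight $w_\gamma$, $\gamma\in(-1,p-1)$, so $w_\gamma\in A_p$) is positive with the right fractional-power domains $D(G^t)=H^{t,p}$ — but we only need the $p=p$ scale, where $H^{t,p}=F^t_{p,p}=B^t_{p,p}$, sidestepping the UMD issue; that $G$ and (the inner realization of) $A$ resolvent-commute; and that the sectoriality angles add up to less than $\pi$, which holds because one can take $G$ with angle $0$ (it is essentially a positive self-adjoint-type operator on the weighted $L^p$ line after a suitable choice). With those in hand, the interpolation identity for the intersection follows, and combining it with the interpolated target $D_A(\theta,q)$ and the interpolated retraction finishes the proof of \eqref{xxx1}. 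I expect the bookkeeping around choosing $s_0,s_1$ in the admissible range and verifying the angle condition for the operator sum to be the only genuinely delicate part; everything else is a direct transcription of the $F$-space result plus standard interpolation.
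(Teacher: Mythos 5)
Your overall route is the paper's: reduce to the $p=q$ case of \eqref{intro-Ftrace} (where $F^\sigma_{p,p}=B^\sigma_{p,p}$), interpolate in the outer smoothness with $(\cdot,\cdot)_{1/2,q}$, identify the interpolated intersection via a Da Prato--Grisvard operator-sum argument in the style of \cite{MS11}, and get surjectivity by interpolating the right-inverse. The surjectivity half of your plan is indeed unproblematic and matches the paper's Step 2: a single extension operator from Proposition \ref{prop:extgeneral} maps $D_A(s\pm\varepsilon+\alpha-\frac{1+\gamma}{p},p)$ into each component separately, and componentwise interpolation of that one operator needs no identity for intersection spaces at all.

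The gap is in the key identification step, where your operator realization does not produce the asserted space. If $\mathcal A$ is the pointwise realization of a fractional power of $A$ on a base space $B^{\sigma}_{p,u}(\R,w_\gamma;X)$, then its domain is $B^{\sigma}_{p,u}(\R,w_\gamma;D(A^{\beta}))$, a fractional power domain; there is no way to make the real interpolation space $D_A(\alpha,r)$ with \emph{arbitrary} $r\in[1,\infty]$ appear as the domain of such an operator over $X$, so the domain of your sum is not ``precisely the asserted Besov intersection''. Moreover, for $\alpha\geq 1$ the power $A^{\alpha}$ of a merely positive operator need not be sectorial, so the angle condition you mention can fail on the inner factor (your tentative formula with $A^{1/\alpha}$ makes this worse, not better). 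The paper removes both obstructions by a preliminary reduction you are missing: by Theorem \ref{thm:newton} one may assume $\alpha<1$ and, trading a little outer smoothness for inner smoothness, embed the given intersection into
\[
\mathbb Y = B^{s+\alpha-\eta}_{p,q}(\R,w_\gamma;D_A(\eta,r)) \cap B^{s}_{p,q}(\R,w_\gamma;D_A(\alpha,r)),\qquad \eta>0 \text{ small},
\]
which suffices for the continuity of $\text{tr}_0$. On the base spaces $E^{\sigma}_{p,u}=B^{\sigma}_{p,u}(\R,w_\gamma;D_A(\eta,r))$ the inner operator is then $\mathcal A=A^{\alpha-\eta}$, which is sectorial because $\alpha-\eta<1$, and by \eqref{Aiso} its domain is exactly $B^{\sigma}_{p,u}(\R,w_\gamma;D_A(\alpha,r))$, for every $r$. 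Two further small corrections: your claim that ``we only need the $p=p$ scale'' is not accurate, since identifying $(\mathbb Y^{-\varepsilon},\mathbb Y^{\varepsilon})_{1/2,q}$ with $\mathbb Y$ requires the Da Prato--Grisvard isomorphism both on $E^{s\pm\varepsilon}_{p,p}$ and on $E^{s}_{p,q}$, i.e.\ with outer microscopic index $q$; the UMD-free ingredient is Amann's Mihlin theorem for vector-valued Besov spaces (valid for all microscopic parameters), not the coincidence $H^{t,p}=B^t_{p,p}$. And since the converse of $(A_0\cap B_0,A_1\cap B_1)_{\eta,q}\hookrightarrow (A_0,A_1)_{\eta,q}\cap(B_0,B_1)_{\eta,q}$ is false in general, the Da Prato--Grisvard identity is genuinely indispensable for the continuity direction, so these details cannot be waved away as bookkeeping.
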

\begin{proof} \emph{Step 1.} We show the continuity of $\tr_0$. Applying the mixed derivative embedding from Theorem \ref{thm:newton} as in the Steps 2 and 3 of the proof of Theorem \ref{thm:F-embed}, we may assume that $\alpha < 1$. Further, Theorem \ref{thm:newton} implies that the intersection space in \eqref{xxx1} embeds into
$$\mathbb Y = B^{s+\alpha-\eta}_{p,q}(\R,w_\gamma;D_A(\eta,r)) \cap B^{s}_{p,q}(\R,w_\gamma;D_A(\alpha,r)),$$
where $\eta>0$ is  small. It thus suffices to prove that $\tr_0:\mathbb Y \to D_A(s+\alpha - \frac{1+\gamma}{p},q)$ is continuous.

From \eqref{intro-Ftrace} and $F_{p,p}^\sigma = B_{p,p}^\sigma$ we know that $\tr_0$ is continuous
\begin{equation}\label{xxx}
 \mathbb Y^{\pm\varepsilon}=B^{s\pm\varepsilon +\alpha-\eta}_{p,p}(\R,w_\gamma;D_A(\eta,r)) \cap B^{s\pm\varepsilon}_{p,p}(\R,w_\gamma;D_A(\alpha,r)) \to  D_A\Big(s\pm\varepsilon +\alpha - \frac{1+\gamma}{p},p\Big),
\end{equation}
where again $\varepsilon > 0$ is small. Applying the real interpolation functor $(\cdot, \cdot)_{1/2,q}$ to \eqref{xxx},  by reiteration we end up with $D_A(s+\alpha - \frac{1+\gamma}{p},q)$ on the right-hand side. It is more involved to interpolate the intersection spaces  $\mathbb Y^{\pm\varepsilon}$ on the left-hand side of \eqref{xxx}. As in \cite{MS11} we argue in an operator theoretic way.  For $\sigma \in \R$ and $u \in [1,\infty]$ we set
$$E_{p,u}^{\sigma} =  B^{\sigma}_{p,u}(\R,w_\gamma;D_A(\eta,r)).$$
On  $E_{p,u}^{\sigma}$ we consider the operator $\mathcal B = (1-\partial_t^2)^{(\alpha-\eta)/2}$ with domain
$D(\mathcal B) = E_{p,u}^{\sigma+\alpha-\eta}$. Using a weighted version of Mihlin's multiplier theorem in vector-valued Besov spaces (see \cite[Theorem 6.1]{Am97}), as in \cite[Example 10.2]{KuWe} one can show that $\mathcal B$ is an invertible sectorial operator with spectral angle equal to zero. This means that $\|\lambda(\lambda+\mathcal B)^{-1}\| \leq C$ on each sector in $\C$ with vertex in zero, see \cite[Section 2]{Haase:2}. Next, the pointwise realization of $A$ on $E_{p,u}^{\sigma}$ is again a positive operator, hence it is sectorial by a Neumann series argument. Since $\alpha < 1$, also $\mathcal A = A^{\alpha- \eta}$ is sectorial by \cite[Proposition 3.1.2]{Haase:2}. For the domain of $\mathcal A$ we have
$$D(\mathcal A) = B^{\sigma}_{p,u}(\R,w_\gamma;D_A(\alpha,r)).$$
It is clear that $\mathcal B$ and $\mathcal A$ commute in the resolvent sense. Noting that the $E_{p,u}^{\sigma}$ form a real interpolation scale (see \cite[Proposition 6.1]{MeyVer1}), it follows from a result of Da Prato and Grisvard (see \cite[Corollary 9.3.2, Theorem 9.3.5]{Haase:2}) that for all $\sigma$ and $u$ as above the operator sum $\mathcal  B+ \mathcal A$ with domain
$$D(\mathcal  B) \cap D(\mathcal A) =  B^{\sigma+\alpha-\eta}_{p,u}(\R,w_\gamma;D_A(\eta,r)) \cap B^{\sigma}_{p,u}(\R,w_\gamma;D_A(\alpha,r))$$
is a continuous isomorphism.

This can be used to show $(\mathbb Y^{-\varepsilon}, \mathbb Y^{\varepsilon})_{1/2,q} = \mathbb Y$ as follows. Choosing $\sigma = s\pm \varepsilon$ and $u = p$, we obtain that $\mathbb Y^{\pm\varepsilon}$ equals $D(\mathcal  B) \cap D(\mathcal A)$, where $\mathcal B$ and $\mathcal A$ are considered on $E_{p,p}^{s\pm\varepsilon}$ respectively. Hence $\mathcal  B+ \mathcal A$ is an isomorphism
\begin{equation}\label{xxx2}
 \mathcal  B+ \mathcal A:(\mathbb Y^{-\varepsilon}, \mathbb Y^{\varepsilon})_{1/2,q} \to (E_{p,p}^{s-\varepsilon},E_{p,p}^{s+\varepsilon})_{1/2,q} =  E_{p,q}^{s}.
\end{equation}
Now choosing $\sigma = s$ and $u = q$,  we obtain that $\mathbb Y = D(\mathcal  B) \cap D(\mathcal A)$ and that $(\mathcal  B+ \mathcal A)^{-1}: E_{p,q}^{s} \to \mathbb Y$ is an isomorphism. Combining this with \eqref{xxx2} yields $(\mathbb Y^{-\varepsilon}, \mathbb Y^{\varepsilon})_{1/2,q} = \mathbb Y.$ Therefore $\tr_0:\mathbb Y \to D_A(s+\alpha - \frac{1+\gamma}{p},q)$ is continuous, and the continuity of $\tr_0$ as asserted in \eqref{xxx1} follows.

\emph{Step 2.} From Proposition \ref{prop:extgeneral} and $F_{p,p}^\sigma = B_{p,p}^\sigma$ we know that there is a continuous right-inverse for $\tr_0$ mapping $D_A(s \pm\varepsilon +\alpha - \frac{1+\gamma}{p},p)$ into $B^{s\pm\varepsilon+\alpha}_{p,p}(\R,w_\gamma;X)$ and into $B^{s\pm\varepsilon }_{p,p}(\R,w_\gamma;D_A(\alpha,r))$, respectively. Applying $(\cdot, \cdot)_{1/2,q}$, we get that the right-inverse maps $D_A(s  +\alpha - \frac{1+\gamma}{p},q)$ continuously into $B^{s+\alpha}_{p,q}(\R,w_\gamma;X)$ and into $B^{s}_{p,q}(\R,w_\gamma;D_A(\alpha,r))$, respectively. This implies \eqref{xxx1}.
\end{proof}


We record a simple consequence of Theorem \ref{thm:main3}.

\begin{corollary}\label{cor:main}
Let $A$ be a positive operator on $X$, let $p\in (1,\infty)$, $q,r\in [1, \infty]$ and $\gamma \in(-1, p-1)$. Suppose $s\in \R$ and $\alpha > 0$ satisfy $s < \frac{1+\gamma}{p} <  s+\alpha$, and that $\beta > 0$ is such that $A^{\beta/\alpha}$ is a positive operator as well. Define $\theta = \left(s+\alpha - \frac{1+\gamma}{p}\right)\frac{\beta}{\alpha}.$
Then
\[\emph{\text{tr}}_0 \big(F^{s+\alpha}_{p,q}(\R,w_\gamma;X) \cap F^{s}_{p,q}(\R,w_\gamma;D_A(\beta,r))\big) = D_A(\theta,p),\]
\[\emph{\text{tr}}_0 \big(B^{s+\alpha}_{p,q}(\R,w_\gamma;X) \cap B^{s}_{p,q}(\R,w_\gamma;D_A(\beta,r))\big) = D_A(\theta,q).\]
\end{corollary}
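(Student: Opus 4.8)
The plan is to reduce Corollary \ref{cor:main} to Theorem \ref{thm:main3} by passing from the operator $A$ to the operator $\widetilde A = A^{\beta/\alpha}$. Set $\nu = \beta/\alpha > 0$. Since $\widetilde A$ is assumed to be a positive operator on $X$, Theorem \ref{thm:main3} applies to $\widetilde A$ with the \emph{same} parameters $p$, $q$, $r$, $\gamma$, $s$ and with $\alpha$ replaced by $\alpha$; but first I need to rewrite the inner space $D_A(\beta,r)$ appearing in the corollary in terms of $\widetilde A$. The key identity is
\begin{equation*}
D_A(\beta,r) = D_{\widetilde A}(\alpha,r),
\end{equation*}
which follows from the definition $D_A(\beta,r) = (X,D(A^m))_{\beta/m,r}$ together with the identity $D(A^m) = D(\widetilde A^{m/\nu})$ (valid for $m$ a multiple of $\nu$, using \eqref{Aiso}) and the reiteration/independence-of-$m$ statement in Section \ref{Sec-interp}; alternatively one invokes \eqref{reiteration} directly, writing $D_A(\beta,r) = (X, D_A(\beta_0,\infty))_{\beta/\beta_0,r}$ and rescaling. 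With this identity in hand, the intersection space in the corollary becomes
\begin{equation*}
F^{s+\alpha}_{p,q}(\R,w_\gamma;X) \cap F^{s}_{p,q}(\R,w_\gamma;D_{\widetilde A}(\alpha,r)),
\end{equation*}
which is precisely the space appearing on the left-hand side of \eqref{intro-Ftrace} for the operator $\widetilde A$.

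Since $s < \tfrac{1+\gamma}{p} < s+\alpha$ holds unchanged, Theorem \ref{thm:main3} (the $F$-case \eqref{intro-Ftrace}) gives
\begin{equation*}
\text{tr}_0\big(F^{s+\alpha}_{p,q}(\R,w_\gamma;X) \cap F^{s}_{p,q}(\R,w_\gamma;D_{\widetilde A}(\alpha,r))\big) = D_{\widetilde A}\Big(s+\alpha - \tfrac{1+\gamma}{p},\,p\Big),
\end{equation*}
and the $B$-case \eqref{intro-Btrace} gives the analogous statement with $p$ replaced by $q$ in the last slot. It then remains only to identify $D_{\widetilde A}\big(s+\alpha-\tfrac{1+\gamma}{p},u\big)$ with $D_A(\theta,u)$ for $u\in\{p,q\}$, where $\theta = \big(s+\alpha-\tfrac{1+\gamma}{p}\big)\tfrac{\beta}{\alpha} = \nu\big(s+\alpha-\tfrac{1+\gamma}{p}\big)$. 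This is again the rescaling identity $D_{\widetilde A}(\rho,u) = D_{A^\nu}(\rho,u) = D_A(\nu\rho,u)$, of the same nature as the one used above; it is an instance of \eqref{reiteration} combined with \cite[Theorem 1.15.2]{Tr1} (or \eqref{Aiso}) relating domains of fractional powers of $A$ and of $A^\nu$. Substituting $\rho = s+\alpha-\tfrac{1+\gamma}{p}$ yields $D_A(\theta,p)$ and $D_A(\theta,q)$ respectively, which are exactly the right-hand sides claimed in the corollary.

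The only genuine content beyond quoting Theorem \ref{thm:main3} is the pair of rescaling identities $D_A(\beta,r) = D_{A^{\beta/\alpha}}(\alpha,r)$ and $D_{A^{\beta/\alpha}}(\rho,u) = D_A(\rho\beta/\alpha,u)$, so the main (very mild) obstacle is to state these cleanly and cite the correct reiteration results; one should be slightly careful that $\beta/\alpha$ need not be rational, so the identification of $D(A^m)$ with a power-domain of $\widetilde A$ is best phrased via the interpolation description \eqref{reiteration} of $D_A(\beta,r)$ rather than via integer powers. No weights, traces, or function-space machinery enter at this stage — everything has already been done in Theorem \ref{thm:main3} — so the proof is genuinely short.
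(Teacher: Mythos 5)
Your proposal is correct and follows exactly the paper's own argument: the paper also reduces to Theorem \ref{thm:main3} applied to $A^{\beta/\alpha}$, using the reiteration identities $D_A(\beta,r)=D_{A^{\beta/\alpha}}(\alpha,r)$ and $D_{A^{\beta/\alpha}}\big(s+\alpha-\tfrac{1+\gamma}{p},u\big)=D_A(\theta,u)$. Your extra remark about phrasing the rescaling via the interpolation description rather than integer powers (since $\beta/\alpha$ need not be rational) is a sensible precision, but it does not change the route.
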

\begin{proof} Since $D_A(\beta,r) = D_{A^{\beta/\alpha}}(\alpha,r)$ by reiteration, we can apply Theorem \ref{thm:main3} to $A^{\beta/\alpha}$, which yields that the trace spaces are $D_{A^{\beta/\alpha}}(s+\alpha- \frac{1+\gamma}{p},p) = D_A(\theta,p)$ and $D_A(\theta,q)$, respectively.
\end{proof}

Finally, as a byproduct of the above arguments we obtain a result on the regularity of orbits of analytic semigroups.

Denote by $D'(\R_+;X)$ the set of $X$-valued distributions on $\R_+$ (see \cite[Section III.1]{Ama95}). For $\mathcal A\in \{F,B\}$, as in \cite[Section 2.9]{Tri83} we set
\begin{equation}\label{def-restriction}
 \mathcal A_{p,q}^s(\R_+,w_\gamma;X) := \{f \in D'(\R_+;X)\,:\, \exists\,g\in \mathcal A_{p,q}^s(\R,w_\gamma;X) \text{ with } g|_{\R_+} = f\},
\end{equation}
which becomes a Banach space when equipped with the norm
$$\|f\|_{\mathcal A_{p,q}^s(\R_+,w_\gamma;X)} = \inf \{\|g\|_{\mathcal A_{p,q}^s(\R,w_\gamma;X)}\,:\, g|_{\R_+} = f\}.$$
Next, let the operator $-A$ generate an analytic $C_0$-semigroup $\{T(t)\}_{t\geq 0}$ on $X$. For simplicity we assume that the semigroup is exponentially stable, i.e., there are $M \geq 1$ and $\omega > 0$ such that $\|T(t)\|_{\calL(X)}\leq M e^{-\omega t}$ for all $t \geq 0$. Then by \cite[Theorem 1.14.5]{Tr1}, for $\alpha >0$ an equivalent norm for $D_A(\alpha,p)$ is given by
\begin{equation}\label{interpol-norm-analytic}
 y\mapsto  \Big(\int_0^\infty  t^{(m-\alpha) p} \|A^m T(t)y\|^p \,\frac{dt}{t}\Big)^{1/p},
\end{equation}
where $m > \alpha$ is an arbitrary integer.

A special case of the following result was obtained in \cite[Section 4.1]{DPZ08}.

\begin{theorem}\label{thm:semigcase}
Let $-A$ generate an exponentially stable analytic $C_0$-semigroup $\{T(t)\}_{t\geq 0}$ on $X$, let $p\in (1,\infty)$, $q\in [1,\infty]$ and $\gamma \in (-1,p-1)$. Suppose that $s\in \R$ and $\alpha > 0$ satisfy $s < \frac{1+\gamma}{p} <  s+\alpha$. Then $x\mapsto T(\cdot)x$ is a continuous map
\begin{equation*}
D_A\Big(s+\alpha-\frac{1+\gamma}{p},p\Big) \to  F_{p,1}^{s+\alpha}(\R_+,w_\gamma; X) \cap F_{p,1}^{s+\theta\alpha}(\R_+,w_\gamma; D_A((1-\theta)\alpha,1)),\qquad \theta\in [0,1),
\end{equation*}
\begin{equation*}
D_A\Big(s+\alpha-\frac{1+\gamma}{p},q\Big) \to  B_{p,q}^{s+\alpha}(\R_+,w_\gamma; X) \cap B_{p,q}^{s+\theta\alpha}(\R_+,w_\gamma; D_A((1-\theta)\alpha,1)),\qquad \theta\in [0,1).
\end{equation*}
\end{theorem}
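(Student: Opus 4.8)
The plan is to reduce the statement to the half-line analogues of Lemma~\ref{RI} and Proposition~\ref{prop:extgeneral}, obtained by replacing the map $\mathrm{ext}_{j,A}$ by $x\mapsto T(\cdot)x$. The role of the resolvent identities and of the equivalent norm \eqref{eq:realinterResolvent} is then taken over by $\partial_t^k T(t)=(-1)^k A^k T(t)$ and by the norm \eqref{interpol-norm-analytic}, while instead of resolvent bounds one uses the analytic-semigroup estimates $\|A^mT(t)\|_{\calL(X)}\leq C_m t^{-m}$ for $0<t\leq1$ together with exponential stability for $t\geq1$; restriction spaces over $\R_+$ are meant in the sense of \eqref{def-restriction}. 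Once the $F_{p,1}$-result is established, general $\theta\in[0,1)$ will follow from the mixed derivative embedding (Theorem~\ref{thm:newton}), and the $B$-case from real interpolation, exactly as in Proposition~\ref{prop:B-case-inter}.

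First I would prove the semigroup version of Lemma~\ref{RI}: for $\beta>\frac{1+\gamma}{p}$ and $\theta\in[0,1]$ the map $x\mapsto T(\cdot)x$ is bounded from $D_A(\beta-\frac{1+\gamma}{p},p)$ into $F_{p,1}^{\theta\beta}(\R_+,w_\gamma;D(A^{(1-\theta)\beta}))$. In the basic range, where $k\in\N$ satisfies $k-1+\frac{1+\gamma}{p}<\theta\beta<k$, one puts $\tilde\gamma=\gamma+p(k-\theta\beta)\in(\gamma,p-1)$, writes $z=A^{(1-\theta)\beta}x$ (legitimate since in this range $(1-\theta)\beta<\beta-\frac{1+\gamma}{p}$, so $D_A(\beta-\frac{1+\gamma}{p},p)\hookrightarrow D(A^{(1-\theta)\beta})$ and $A^{(1-\theta)\beta}$ maps into lower order spaces by \eqref{Aiso}), and estimates $\sum_{l=0}^{k}\int_0^\infty \|A^l T(t) z\|^p t^{\tilde\gamma}\,dt$. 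For $l\geq1$ this integral is comparable to $\|z\|_{D_A(l-\frac{1+\tilde\gamma}{p},p)}^p$ by \eqref{interpol-norm-analytic}, which in view of $l-\frac{1+\tilde\gamma}{p}+(1-\theta)\beta=l-k+\beta-\frac{1+\gamma}{p}\leq\beta-\frac{1+\gamma}{p}$ and \eqref{Aiso} is $\leq C\|x\|_{D_A(\beta-\frac{1+\gamma}{p},p)}^p$, while for $l=0$ one bounds by $C\|z\|^p$ using $\tilde\gamma>-1$ and exponential decay. Hence $T(\cdot)x\in W^{k,p}(\R_+,w_{\tilde\gamma};D(A^{(1-\theta)\beta}))$; applying an $m$-extension operator from \eqref{ext-1}, the elementary embedding \eqref{eq:TriebelLizorkinW} and Theorem~\ref{thm:SobB} (using $k-\frac{1+\tilde\gamma}{p}=\theta\beta-\frac{1+\gamma}{p}$ and $\tilde\gamma>\gamma$) places $T(\cdot)x$ in $F_{p,1}^{\theta\beta}(\R_+,w_\gamma;D(A^{(1-\theta)\beta}))$ as a restriction. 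The remaining ranges of $\theta\beta$ are reduced to the basic one word for word as in Steps~2--4 of the proof of Lemma~\ref{RI}, using Theorem~\ref{thm:newton} and \eqref{Aiso}.

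Next, the semigroup version of Proposition~\ref{prop:extgeneral}: fixing small $\varepsilon>0$, one shows that $x\mapsto T(\cdot)x$ maps $D_A(s+\alpha-\frac{1+\gamma}{p},p)$ continuously into $F_{p,1}^{s+\alpha}(\R_+,w_\gamma;X)\cap F_{p,1}^{s-\varepsilon}(\R_+,w_\gamma;D(A^{\alpha+\varepsilon}))$, and one arranges this through a single extension $g_x=E_+(T(\cdot)x|_{\R_+})$ lying in both spaces on $\R$. The $X$-component comes from the previous step with $\beta=s+\alpha$, $\theta=1$. For the $D(A^{\alpha+\varepsilon})$-component one distinguishes cases exactly as in Steps~1--2 of the proof of Proposition~\ref{prop:extgeneral}: for $s-\varepsilon\geq0$ use the previous step with $\beta=s+\alpha$, $\theta=\frac{s-\varepsilon}{s+\alpha}$; for $\frac{1+\gamma}{p}-1<s-\varepsilon<0$ use it with $\beta=\alpha+\varepsilon$, $\theta=0$ and the weight $w_{\gamma-(s-\varepsilon)p}\in A_p$, followed by Theorem~\ref{thm:SobB}; and for $s-\varepsilon\leq\frac{1+\gamma}{p}-1$ choose $k\in\N$ with $\frac{1+\gamma}{p}-1<s-\varepsilon+k\leq\frac{1+\gamma}{p}$, take $E_+=\textsf{E}_+^{m+k,k}$ from \eqref{ext-2}, and use $\partial_t^k T(t)x=(-1)^kA^kT(t)x$ together with $\textsf{E}_+^{m+k,k}\partial_t^k=\partial_t^k\textsf{E}_+^{m+k}$ from Lemma~\ref{extension}, the boundedness of $\partial_t\colon F_{p,1}^{\sigma}(\R)\to F_{p,1}^{\sigma-1}(\R)$ and the isomorphisms \eqref{Aiso}, to reduce to the case $(s-\varepsilon+k,\alpha+\varepsilon-k)$, which falls under the two previous subcases.

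Finally, since $D(A^{\alpha+\varepsilon})\hookrightarrow D_A(\alpha+\varepsilon,\infty)$, Theorem~\ref{thm:newton} (in the form of Remark~\ref{rem:mixed-der}(i), with interpolation parameter $\mu=\frac{(1-\theta)\alpha}{\alpha+\varepsilon}\in(0,1)$) shows that $g_x$, hence $T(\cdot)x$ after restriction, lies in $F_{p,1}^{s+\theta\alpha}(\R_+,w_\gamma;D_A((1-\theta)\alpha,1))$ for every $\theta\in[0,1)$; together with the $X$-membership this is the first assertion. For the second, one runs the first at smoothness levels $s\pm\delta$ with data spaces $D_A(s\pm\delta+\alpha-\frac{1+\gamma}{p},p)$ and $\delta>0$ small (so that $s\pm\delta<\frac{1+\gamma}{p}<s\pm\delta+\alpha$ still holds), uses $F_{p,1}^{\sigma}\hookrightarrow B_{p,p}^{\sigma}$, and applies $(\cdot,\cdot)_{1/2,q}$: by reiteration the data space becomes $D_A(s+\alpha-\frac{1+\gamma}{p},q)$, the standard weighted half-line identity $(B_{p,p}^{\sigma_0},B_{p,p}^{\sigma_1})_{1/2,q}=B_{p,q}^{(\sigma_0+\sigma_1)/2}$ turns the two target factors into $B_{p,q}^{s+\alpha}(\R_+,w_\gamma;X)$ and $B_{p,q}^{s+\theta\alpha}(\R_+,w_\gamma;D_A((1-\theta)\alpha,1))$, and the intersection is recovered by composing the interpolated map with the two inclusions separately, as in Step~2 of the proof of Proposition~\ref{prop:B-case-inter}. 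The main obstacle is the first step: carrying out the weight bookkeeping so that Theorem~\ref{thm:SobB} delivers precisely $F_{p,1}^{\theta\beta}(\R_+,w_\gamma;\cdot)$, and reducing the ranges $\theta\beta\geq1$ to the basic case via Theorem~\ref{thm:newton}; a secondary point is the negative-$s$ reduction in the second step, where $\partial_t^kT=(-1)^kA^kT$ substitutes for the resolvent identity $\partial_t^kR_A^{j-k}=c\,A^kR_A^j$ used in Proposition~\ref{prop:extgeneral}.
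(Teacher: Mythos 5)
Your proposal is correct and follows essentially the same route as the paper: the paper's proof likewise consists of rerunning Lemma~\ref{RI} and Proposition~\ref{prop:extgeneral} with the semigroup norm \eqref{interpol-norm-analytic} in place of \eqref{eq:realinterResolvent} (so that $\partial_t^k T(t)=(-1)^kA^kT(t)$ takes over the role of the resolvent identities), applying a suitable extension operator and restricting to $\R_+$, and then obtaining the $B$-case by real interpolation. Your write-up merely makes these semigroup analogues explicit (including the reduction of general $\theta\in[0,1)$ via Theorem~\ref{thm:newton} and the interpolation step as in Proposition~\ref{prop:B-case-inter}), which matches the intended argument.
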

\begin{proof} Using the norm \eqref{interpol-norm-analytic} instead of  \eqref{eq:realinterResolvent}, one may argue as in Lemma \ref{RI} and Proposition \ref{prop:extgeneral} to obtain that $x\mapsto E_+T(\cdot)x$ maps $D_A(s+\alpha-\frac{1+\gamma}{p},p)$ continuously into the $F$-intersection spaces, over $\R$ instead of $\R_+$. Here $E_+$ is a suitable extension operator for $\R_+$ as in Proposition \ref{prop:extgeneral}. Since the restriction to $\R_+$ is, by definition, continuous from spaces over $\R$ to those over $\R_+$, the assertion for the $F$-spaces follows. The assertion for the $B$-spaces follows from real interpolation, using \cite[Proposition 6.1]{MeyVer1}.
\end{proof}

\section{The  $L^p$-$L^q$ approach to the two-phase Stefan problem\\ with Gibbs-Thomson correction\label{sec:Stefan}}
As an application of the trace results we prove maximal $L^p$-$L^q$-regularity for the linearized, fully inhomogeneous two-phase Stefan problem with Gibbs-Thomson correction.

This parabolic initial-boundary value problem is posed on the two phases $\R_+^d= \{x\in \R^d: x_d > 0\}$ and $\R_-^d= \{x\in \R^d: x_d < 0\}$, being separated by a flat interface $\R^{d-1}$, which is the common boundary of the phases. The unknowns are the 'temperatures' $u_+$ and $u_-$ in the phases $\R_+^d$ and $\R_-^d$, and the 'height function' $h$, which only lives on the interface $\R^{d-1}$. Writing $\dot{\R}^d = \R^{d-1}\times  (\R\backslash \{0\})$ and $u$ for $u_\pm$ in the phases, the problem is given by
\begin{equation}\label{stefan-inhom}
\left \{
\begin{array}{rccl}
\mu u + \partial_t u -\Delta u & = &   f & \text{ in }\R_+\times \dot{\R}^d,   \\
\llbracket u \rrbracket & = & 0 & \text{ on }\R_+\times \R^{d-1},\\
u + \Delta' h & = & g_1 & \text{ on }\R_+\times \R^{d-1},  \\
\partial_t h -\llbracket \partial_{\nu} u \rrbracket   & = & g_2 & \text{ on }\R_+\times \R^{d-1}, \\
u|_{t=0}  & = & u_0 & \text{ in }\dot{\R}^d,   \\
h|_{t=0}  & = & h_0 & \text{ on }\R^{d-1}.
\end{array}
\right.
\end{equation}
Here $\mu > 0$ is a constant, and

$$\llbracket u \rrbracket = u_+|_{\R^{d-1}} - u_-|_{\R^{d-1}}, \qquad \llbracket \partial_\nu u \rrbracket = \partial_{x_d} u_+ |_{\R^{d-1}} - \partial_{x_d} u_- |_{\R^{d-1}},$$
denote the jump of $u$ and of the outer normal derivatives of $u$ along the common phase boundary $\partial \dot{\R}^d = \R^{d-1}$, respectively. In the third equation, $u$ denotes $u_+|_{\R^{d-1}} =  u_-|_{\R^{d-1}}$, where this identity follows from $\llbracket u \rrbracket = 0$. Moreover, $\Delta$ is the Laplacian on $\R^d$ and $\Delta'$ is the Laplacian on $\R^{d-1}$. The inhomogeneities $f,g_1, g_2$ and the initial values $u_0, h_0$ are assumed to be given.

One ends up with \eqref{stefan-inhom} after locally transforming and linearizing the full two-phase Stefan problem with Gibbs-Thomson correction, which is a free boundary problem modeling phase transitions in liquid-solid systems, to a fixed phase boundary and extending to $\dot{\R}^d$, see \cite[Section 7]{EPS03}. The graph of $h$ represents the transformed free phase boundary.

In a maximal $L^p$-$L^q$-regularity approach one looks for strong solutions $(u,h)$ which satisfy \eqref{stefan-inhom} pointwise almost everywhere. The corresponding one-phase problem was considered in an $L^p$-$L^p$-setting in \cite[Section 5]{DSS08}. The fully inhomogeneous two-phase problem \eqref{stefan-inhom} was treated in \cite[Theorem 6.1]{EPS03} in an $L^p$-$L^p$-setting. For trivial initial data $u_0 = 0$ and $h_0=0$ it was treated in an $L^p$-$L^q$-setting in \cite[Theorem 4.40]{DK14}, where $p\in (1,\infty)$ and $\frac{2p}{p+1} < q < 2p.$

The purpose of this section is to extend the maximal $L^p$-$L^q$-regularity result of \cite[Theorem 4.40]{DK14} for \eqref{stefan-inhom} to the case of nontrivial initial values $u_0$ and $h_0$.

In the $L^p$-$L^q$-approach Triebel-Lizorkin spaces naturally come into play for $p\neq q$ as the optimal time regularity of the boundary inhomogeneities and the unknown $h$. We also refer to \cite{DHP07, Wei02} for the case of a heat equation with inhomogeneous Dirichlet or Neumann boundary conditions. In general, the motivation to work in an $L^p$-$L^q$-setting  is when the scaling of the basic underlying space $L^p(\R_+; L^q(\dot{\R}^d))$ fits to the scaling of the problem under consideration only if $p\neq q$ (see e.g. \cite[Section 3]{Can04}, \cite{Giga86} or \cite[Section 1]{Shi11}).

In the $L^p$-$L^q$-approach one starts with
$$f \in \mathbb{E}_0= L^p(\R_+; L^q(\dot{\R}^{d})), \qquad p,q\in (1,\infty),$$
and looks for a solution $(u,h)$ such that, at first,
$$u\in \E_u = H^{1,p}(\R_+; L^q(\dot{\R}^d))\cap L^p(\R_+; H^{2,q}(\dot{\R}^d)).$$
The boundary inhomogeneities $g_1$ and $g_2$ should have at least the regularity of the terms involving $u$ arising in the corresponding equations. Combining \cite[Theorem 2.2]{JS08} with \cite[Proposition 3.23]{DK14} (see also \cite[Proposition 6.4]{DHP07}), this suggests that
$$g_1\in \mathbb{F}_1 = F_{p,q}^{1-1/(2q)}(\R_+; L^q(\R^{d-1})) \cap L^p(\R_+; B_{q,q}^{2-1/q}(\R^{d-1})),$$
$$g_2 \in \mathbb{F}_2 = F_{p,q}^{1/2-1/(2q)}(\R_+; L^q(\R^{d-1})) \cap L^p(\R_+; B_{q,q}^{1-1/q}(\R^{d-1})).$$
Here, the Triebel-Lizorkin spaces over the half-line are defined by restriction, see \eqref{def-restriction} and  \cite[Definition 3.4]{DK14}.

It is shown in \cite[Corollary 3.12]{DK14} that for all $s > 0$, $r\in \R$ and $\frac{2p}{p+1} < q < 2p$ the restriction from $\R$ to $\R_+$ is a retraction from $F_{p,q}^s(\R;H^{r,q}(\R^{d-1}))$ to $F_{p,q}^s(\R_+;H^{r,q}(\R^{d-1}))$, and that there exists a universal coretraction. Of course, the same is true for any vector-valued $L^p$-space. This allows to transfer results for $F$-spaces over $\R$ to the corresponding $F$-spaces over $\R_+$ by an extension-restriction argument. Here and below, the condition $\frac{2p}{p+1} < q < 2p$ imposed \cite{DK14} should not be essential, see also Remark \ref{stefan-rem-5}. Note that the condition in particular covers the case $p = q$.

The regularity of the boundary unknown $h$ should now be such that $\Delta' h \in \mathbb F_1$ and $\partial_t h \in \mathbb F_2$. We claim that
$$h \in \E_h = F_{p,q}^{3/2-1/(2q)}(\R_+; L^q(\R^{d-1}))\cap F_{p,q}^{1-1/(2q)}(\R_+; H^{2,q}(\R^{d-1})) \cap L^p(\R_+; B_{q,q}^{4-1/q}(\R^{d-1}))$$
is sufficient for this purpose. To apply the results from the previous sections we introduce the space
$$\tilde{\mathbb E}_h = F_{p,q}^{3/2-1/(2q)}(\R; L^q(\R^{d-1}))\cap F_{p,q}^{1-1/(2q)}(\R; H^{2,q}(\R^{d-1})) \cap L^p(\R; B_{q,q}^{4-1/q}(\R^{d-1})),$$
which is $\mathbb E_h$ with $\R_+$ replaced by $\R$, and further $\tilde{\mathbb F}_1$ and $\tilde{\mathbb F}_2$, which are $\mathbb F_1$ and $\mathbb F_2$ over $\R$, respectively.

\begin{lemma} \label{stefan-dt} For $p,q\in (1,\infty)$ the operators $\Delta': \tilde{\mathbb E}_h \to \tilde{\mathbb F}_1$ and $\partial_t:\tilde{\mathbb E}_h \to \tilde{\mathbb F}_2$ are continuous.
\end{lemma}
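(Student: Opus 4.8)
The plan is to check both mapping properties componentwise, exploiting that $\Delta'$ and $\partial_t$ act boundedly between the individual vector-valued function spaces making up $\tilde{\mathbb E}_h$, $\tilde{\mathbb F}_1$, $\tilde{\mathbb F}_2$, and invoking the mixed derivative embedding of Theorem \ref{thm:newton} for the single component of $\tilde{\mathbb F}_2$ that is not reached directly.

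For $\Delta'$ I would first note that $\Delta'$ is bounded as an operator $H^{2,q}(\R^{d-1})\to L^q(\R^{d-1})$ and $B_{q,q}^{4-1/q}(\R^{d-1})\to B_{q,q}^{2-1/q}(\R^{d-1})$ (a second order constant-coefficient operator shifts Bessel-potential and Besov smoothness by $2$). Applying it pointwise in the time variable — which is admissible since the Littlewood--Paley projections in time commute with operators acting in space — it therefore maps $F_{p,q}^{1-1/(2q)}(\R;H^{2,q})\to F_{p,q}^{1-1/(2q)}(\R;L^q)$ and $L^p(\R;B_{q,q}^{4-1/q})\to L^p(\R;B_{q,q}^{2-1/q})$. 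Since $\tilde{\mathbb E}_h$ embeds into the intersection of the two source spaces and $\tilde{\mathbb F}_1$ is precisely the intersection of the two target spaces, the continuity of $\Delta':\tilde{\mathbb E}_h\to\tilde{\mathbb F}_1$ follows at once; note that only the second and third components of $\tilde{\mathbb E}_h$ are used here.

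For $\partial_t$ the first component of $\tilde{\mathbb F}_2$ is reached directly: differentiation lowers the time-smoothness of an $F$-space by one (see \cite[Proposition 3.10]{MeyVer1}), so $\partial_t:F_{p,q}^{3/2-1/(2q)}(\R;L^q)\to F_{p,q}^{1/2-1/(2q)}(\R;L^q)$. The second component $L^p(\R;B_{q,q}^{1-1/q})$ is the crux, since here temporal regularity has to be converted into spatial regularity. I would obtain it from Theorem \ref{thm:newton} applied with $X_0=L^q(\R^{d-1})$, $X_1=H^{2,q}(\R^{d-1})$, $X_\theta=B_{q,q}^{1-1/q}(\R^{d-1})=(L^q,H^{2,q})_{\theta,q}$ (an admissible choice of $X_\theta$), $\theta=\tfrac12-\tfrac1{2q}$, $p_0=p_1=p$, $q_0=q_1=q$, unit weights, $s=1-\tfrac1{2q}$ and $\alpha=\tfrac12$, which yields $F_{p,q}^{3/2-1/(2q)}(\R;L^q)\cap F_{p,q}^{1-1/(2q)}(\R;H^{2,q})\hookrightarrow F_{p,q}^{5/4-1/(4q)}(\R;B_{q,q}^{1-1/q})$, the exponent being $s+(1-\theta)\alpha$. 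Applying $\partial_t$ once more lands in $F_{p,q}^{1/4-1/(4q)}(\R;B_{q,q}^{1-1/q})$; since $\tfrac14-\tfrac1{4q}>0$ for $q\in(1,\infty)$, the elementary embeddings of \cite[Propositions 3.11 and 3.12]{MeyVer1} together with \eqref{eq:TriebelLizorkinH} give $F_{p,q}^{1/4-1/(4q)}(\R;B_{q,q}^{1-1/q})\hookrightarrow L^p(\R;B_{q,q}^{1-1/q})$. Intersecting the two component estimates yields $\partial_t:\tilde{\mathbb E}_h\to\tilde{\mathbb F}_2$.

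The only genuinely nontrivial step is this last one: the mixed derivative embedding is what allows smoothness to be transferred from the time variable to the space variable, and the small computation above is needed only to confirm that after applying $\partial_t$ a strictly positive amount of time-smoothness survives — which is exactly where the hypothesis $q>1$ enters — so that one can safely drop into $L^p$ in time. Everything else (boundedness of $\Delta'$ and $\partial_t$ on the single spaces, and lifting pointwise operators to the vector-valued $F$-spaces) is routine. The whole argument could be phrased abstractly with $A=1-\Delta'$ on $L^q(\R^{d-1})$, for which $D(A)=H^{2,q}$ and $D_A(\sigma,r)=B_{q,r}^{2\sigma}$, but the concrete version above is more transparent.
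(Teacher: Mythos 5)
Your proof is correct and follows essentially the same route as the paper: $\Delta'$ is handled by componentwise (pointwise-in-time) estimates, and for $\partial_t$ the key step is Theorem \ref{thm:newton} applied with $X_0=L^q$, $X_1=H^{2,q}$, trading temporal for spatial regularity before differentiating and dropping into $L^p$ in time via the elementary embeddings. The only (harmless) difference is bookkeeping: the paper takes $X_\theta=H^{2-2/q-4\varepsilon,q}$ and lands in $H^{1,p}(\R;B_{q,q}^{1-1/q})$ before applying $\partial_t$, whereas you take $X_\theta=B_{q,q}^{1-1/q}$ directly and keep the surplus smoothness $\tfrac14-\tfrac1{4q}>0$ in the temporal $F$-scale.
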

\begin{proof} The assertion for $\Delta'$ follows from a direct pointwise estimate in the $F$- and the $L^p$-norm. For the continuity of $\partial_t$, we use Theorem \ref{thm:newton} with $X_0= L^q$, $X_1 = H^{2,q}$, $\theta = 1/q + 2\e$ and $X_{1-\theta} = [X_0,X_1]_{1-\theta} =  H^{2-2/q-4\varepsilon,q}$. This gives
$$\tilde{\mathbb E}_h \hookrightarrow F_{p,q}^{3/2-1/(2q)}(\R; L^q(\R^{d-1}))\cap F_{p,q}^{1-1/(2q)}(\R; H^{2,q}(\R^{d-1})) \hookrightarrow F_{p,q}^{1+\varepsilon}(\R; H^{2-2/q-4\varepsilon,q}(\R^{d-1})),$$
where $\varepsilon > 0$ is such that $2-2/q-4\varepsilon > 1-1/q$. Using that then $F_{p,q}^{1+\varepsilon} \hookrightarrow H^{1,p}$ and $H^{2-2/q-4\varepsilon,q} \hookrightarrow B_{q,q}^{1-1/q}$, we get
$$\tilde{\mathbb E}_h \hookrightarrow F_{p,q}^{3/2-1/(2q)}(\R; L^q(\R^{d-1}))\cap H^{1,p}(\R; B_{q,q}^{1-1/q}(\R^{d-1})).$$
This embedding together with \cite[Proposition 3.10]{MeyVer1} shows that $\partial_t:\tilde{\mathbb E}_h \to \tilde{\mathbb F}_2$ is continuous.
\end{proof}

In the following we determine the temporal trace spaces of $\mathbb E_u$ and $\E_h$, to which $u_0 = u|_{t=0}$ and $h_0 = h|_{t=0} $ necessarily belong if $(u,h) \in \mathbb E_u \times \mathbb E_h$.

For $s > 0$, let $B_{q,p}^s(\dot{\R}^d)$ be the space of all $v_0 \in L^q(\R^d)$ such that $v_0 |_{\R_\pm^d} \in B_{q,p}^s(\R_\pm^d)$, where the latter spaces are as above defined by restriction (see \cite[Section 4.2.1]{Tr1}). It then follows from an extension-restriction argument and Theorem \ref{thm:main3} that the trace operator $\tr_0u = u|_{t=0}$ maps $\mathbb E_u$ continuously onto
$$X_u = (L^q(\dot{\R}^d), H^{2,q}(\dot{\R}^d))_{1-1/p,p} = B_{q,p}^{2-2/p}(\dot{\R}^d),$$
see \cite[Theorem 2.4.1]{Tr1} for the interpolation result. The temporal trace space of $\mathbb E_h$ will be deduced from the following.

\begin{proposition} \label{stefan-trace} Let $p,q\in (1,\infty)$ satisfy $1-1/(2q),1/2-1/(2q)\neq 1/p$, and consider again
$$\tilde{\mathbb E}_h = F_{p,q}^{3/2-1/(2q)}(\R; L^q(\R^{d-1}))\cap F_{p,q}^{1-1/(2q)}(\R; H^{2,q}(\R^{d-1})) \cap L^p(\R; B_{q,q}^{4-1/q}(\R^{d-1})).$$
Then $\emph{\tr}_0$ maps $\tilde{\mathbb E}_h$ continuously onto $X_h$, where
$$X_h  = B_{q,p}^{6-2/q - 4/p}(\R^{d-1})\quad \text{if }\; 1-1/(2q) < 1/p, \quad X_h  = B_{q,p}^{4- 1/q - 2/p}(\R^{d-1})\quad  \text{if } \;1-1/(2q) > 1/p.$$
Moreover, if $1/2-1/(2q) > 1/p$, then the operator $\emph{\tr}_0 \partial_t$ maps $\tilde{\mathbb E}_h$ continuously onto
$$X_{\partial_ t h}  = B_{q,p}^{2-2/q - 4/p}(\R^{d-1}).$$
In each case there is a continuous map $\mathcal R : X_h \times X_{\partial_t h} \to \tilde{\mathbb E}_h$ such that $\emph{\tr}_0 \mathcal R (h_0,h_1) = h_0$ and, in case $1/2-1/(2q) > 1/p$, such that also $\emph{\tr}_0 \partial_t \mathcal R (h_0,h_1) = h_1$ for all $(h_0,h_1)\in X_h\times X_{\partial_t h}$.
\end{proposition}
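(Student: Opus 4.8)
We realise $\tilde{\mathbb E}_h$ (after, in one case, passing to the realisation of $A$ on a subspace) as a double intersection of the type treated in Theorem~\ref{thm:main3}/Corollary~\ref{cor:main}, taking $A=1-\Delta'$, the positive shifted Laplacian on $L^q(\R^{d-1})$. Then $D(A^\beta)=H^{2\beta,q}(\R^{d-1})$, and by the real interpolation description of Besov spaces $D_A(\beta,r)=B^{2\beta}_{q,r}(\R^{d-1})$ for all $\beta>0$, $r\in[1,\infty]$; in particular $H^{2,q}=D(A)$, $B^{4-1/q}_{q,q}=D_A(2-\tfrac1{2q},q)$, and $X_h$, $X_{\partial_th}$ are spaces $D_A(\cdot,p)$. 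On the Newton diagram with coordinates (temporal smoothness, order in the $A$-scale) the summands of $\tilde{\mathbb E}_h$ sit at $P_1=(\tfrac32-\tfrac1{2q},0)$, $P_2=(1-\tfrac1{2q},1)$, $P_3=(0,2-\tfrac1{2q})$, and $P_2$ lies strictly above $[P_1,P_3]$ (the segment has $A$-order $\tfrac{2-1/(2q)}{3-1/q}<1$ at temporal level $1-\tfrac1{2q}$), so $\tilde{\mathbb E}_h$ is a genuine triple intersection. The dichotomy $1-\tfrac1{2q}\gtrless\tfrac1p$ (the endpoint excluded by hypothesis) records whether $\tfrac1p$ lies on the edge $[P_1,P_2]$ or on $[P_2,P_3]$.

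For the continuity of $\tr_0$ one discards the summand far from the relevant edge. If $1-\tfrac1{2q}<\tfrac1p$, then $\tilde{\mathbb E}_h\hookrightarrow F^{3/2-1/(2q)}_{p,q}(\R;L^q)\cap F^{1-1/(2q)}_{p,q}(\R;H^{2,q})$, which (using the freedom in the intermediate space in Theorem~\ref{thm:main3}) is of the form in Corollary~\ref{cor:main} for $A$ with $s=1-\tfrac1{2q}$, $\alpha=\tfrac12$, $\beta=1$, $\gamma=0$ (so $A^{\beta/\alpha}=A^2$ is positive and $s<\tfrac1p<s+\alpha$), giving $\tr_0\tilde{\mathbb E}_h\hookrightarrow D_A(3-\tfrac1q-\tfrac2p,p)=B^{6-2/q-4/p}_{q,p}$. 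If $1-\tfrac1{2q}>\tfrac1p$, then $\tilde{\mathbb E}_h\hookrightarrow F^{1-1/(2q)}_{p,q}(\R;H^{2,q})\cap L^p(\R;B^{4-1/q}_{q,q})$; passing to the realisation $\tilde A$ of $A$ on $H^{2,q}$, for which $D_{\tilde A}(\nu,r)=D_A(1+\nu,r)$, this is a Theorem~\ref{thm:main3} intersection with $\tilde s=0$, $\tilde\alpha=1-\tfrac1{2q}$, giving $\tr_0\tilde{\mathbb E}_h\hookrightarrow D_A(2-\tfrac1{2q}-\tfrac1p,p)=B^{4-1/q-2/p}_{q,p}$. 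For $\tr_0\partial_t$: as in the proof of Lemma~\ref{stefan-dt}, $\partial_t$ maps $\tilde{\mathbb E}_h$ continuously into $F^{1/2-1/(2q)}_{p,q}(\R;L^q)\cap F^{-1/(2q)}_{p,q}(\R;H^{2,q})$, and since $\tfrac1p<\tfrac12-\tfrac1{2q}$, Corollary~\ref{cor:main} (for $A$, $s=-\tfrac1{2q}$, $\alpha=\tfrac12$, $\beta=1$) applies and gives $\tr_0\partial_t\tilde{\mathbb E}_h\hookrightarrow D_A(1-\tfrac1q-\tfrac2p,p)=B^{2-2/q-4/p}_{q,p}$.

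For surjectivity and the common right-inverse we use the extension operators of Section~\ref{sec:tracesintersections}. Set $\mathbb A=A^2$ if $1-\tfrac1{2q}<\tfrac1p$ and $\mathbb A=A$ otherwise, fix an extension operator of the form \eqref{ext-1} with large $m$ (matching derivatives up to order $m$ at $t=0$) and large $j$, and put $\mathcal R_0h_0=\ext_{j,\mathbb A}h_0$, so $\tr_0\mathcal R_0h_0=h_0$ and $\partial_t\mathcal R_0h_0|_{t=0}=-j\,\mathbb A\,h_0$. That $\mathcal R_0h_0\in\tilde{\mathbb E}_h$ follows from Lemma~\ref{RI}: applied to $A^2$ with $\beta=\tfrac32-\tfrac1{2q}$, resp.\ to $A$ with $\beta=2-\tfrac1{2q}$ (so the domain is exactly $X_h$), it places $\mathcal R_0h_0$ in $F^{\beta_0}_{p,1}(\R;L^q)\cap F^0_{p,1}(\R;D(A^{\beta_1}))$ with $\beta_0\geq\tfrac32-\tfrac1{2q}$; this gives the first summand, and applying the mixed derivative embedding Theorem~\ref{thm:newton} to the two endpoints yields the middle summand $F^{1-1/(2q)}_{p,q}(\R;H^{2,q})$ (a short computation shows the interpolation parameter puts the temporal smoothness exactly at $1-\tfrac1{2q}$, in both cases) and, via $D_A(2-\tfrac1{2q},1)=B^{4-1/q}_{q,1}\hookrightarrow B^{4-1/q}_{q,q}$ and $F^0_{p,1}\hookrightarrow L^p$, the third summand $L^p(\R;B^{4-1/q}_{q,q})$. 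When in addition $\tfrac12-\tfrac1{2q}>\tfrac1p$ (so $\mathbb A=A$) one must also prescribe the first derivative: since $Ah_0\in D_A(1-\tfrac1{2q}-\tfrac1p,p)\hookrightarrow D_A(1-\tfrac1q-\tfrac2p,p)=X_{\partial_th}$, we have $h_1+jAh_0\in X_{\partial_th}$, and, using that $A=1-\Delta'$ is invertible, we set
\[\mathcal R_1z=\tfrac1{\tilde j-1}\big(\chi\,A^{-2}z-\ext_{\tilde j-1,\,A^2}(A^{-2}z)\big),\qquad \mathcal R(h_0,h_1)=\mathcal R_0h_0+\mathcal R_1(h_1+jAh_0),\]
with $\chi$ a smooth time cut-off equal to $1$ near $0$ and $\tilde j$ chosen appropriately; then $\tr_0\mathcal R_1z=0$, $\partial_t\mathcal R_1z|_{t=0}=z$, hence $\tr_0\mathcal R=h_0$ and $\partial_t\mathcal R|_{t=0}=h_1$. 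Here $\ext_{\tilde j-1,A^2}(A^{-2}z)\in\tilde{\mathbb E}_h$ by the same Lemma~\ref{RI}/Theorem~\ref{thm:newton} argument as above (now $A^{-2}z\in D_{A^2}(\tfrac32-\tfrac1{2q}-\tfrac1p,p)$, so again $\beta=\tfrac32-\tfrac1{2q}$), while $\chi\,A^{-2}z$ is a compactly supported smooth function with values in $D_A(3-\tfrac1q-\tfrac2p,p)\hookrightarrow L^q\cap H^{2,q}\cap B^{4-1/q}_{q,q}$ (the last embedding uses $\tfrac1{2q}+\tfrac2p<1$, from $\tfrac1p<\tfrac12-\tfrac1{2q}$), hence in $\tilde{\mathbb E}_h$; in the remaining case $\tfrac12-\tfrac1{2q}<\tfrac1p$ one takes $\mathcal R=\mathcal R_0$.

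The main obstacle is precisely this regularity bookkeeping. Because $\tilde{\mathbb E}_h$ is a genuine triple — not double — intersection, no single resolvent-based extension for a double intersection will do: one must choose the base operator ($A$ versus $A^2$) according to the relevant edge of the Newton diagram, invoke Theorem~\ref{thm:newton} to capture the interior vertex $F^{1-1/(2q)}_{p,q}(\R;H^{2,q})$, and, when the derivative trace is present, build the separate cut-off/antiderivative operator $\mathcal R_1$ and check that it also matches all three summands. The computations — that the relevant interpolation parameters hit the temporal smoothness $1-\tfrac1{2q}$ on the nose, and that the auxiliary inequalities (e.g.\ $\tfrac1{2q}+\tfrac1p<\tfrac12$ when $\tfrac12-\tfrac1{2q}>\tfrac1p$) hold — are the technical heart of the argument.
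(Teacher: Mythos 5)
Your continuity arguments and your extension in the case $1-1/(2q)<1/p$ are essentially the paper's: the paper also splits along the Newton diagram, proving continuity by discarding one summand and applying Theorem \ref{thm:main3} with $A=(1-\Delta')^2$ on $L^q$ (respectively with the realization of $1-\Delta'$ on $H^{2,q}$), which is what your use of Corollary \ref{cor:main} amounts to, and it recovers the middle summand of $\tilde{\mathbb E}_h$ for the extension by exactly your Theorem \ref{thm:newton} computation with $\theta=\tfrac{1}{3-1/q}$. For the derivative trace the paper uses the combinations $2\ext_{1,A_0}-\ext_{1,2A_0}$ and $(\ext_{1,A_1}-\ext_{1,2A_1})A_1^{-1}$ to decouple the two traces, whereas you keep a single $\ext_{j,A}$ and correct its derivative trace with a cutoff term $\chi A^{-2}z-\ext_{\tilde j-1,A^2}(A^{-2}z)$; that variant is fine, and your verifications ($Ah_0\in X_{\partial_t h}$, $A^{-2}z\in B^{6-2/q-4/p}_{q,p}$, $B^{6-2/q-4/p}_{q,p}\hookrightarrow B^{4-1/q}_{q,q}$ from $\tfrac1{2q}+\tfrac2p<1$) are correct.

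There is, however, a genuine gap in your surjectivity argument in the case $1-1/(2q)>1/p$ (hence also in the case $1/2-1/(2q)>1/p$, where $\mathcal R_0=\ext_{j,A}$ is used): your justification that $\ext_{j,A}h_0$ lies in the third summand $L^p(\R;B^{4-1/q}_{q,q})$ does not go through for $q<2$. Lemma \ref{RI} with $\beta=2-1/(2q)$ and $\theta=0$ only yields $\ext_{j,A}h_0\in F^0_{p,1}(\R;D(A^{2-1/(2q)}))=F^0_{p,1}(\R;H^{4-1/q,q})$, i.e.\ the fractional power domain, not $F^0_{p,1}(\R;D_A(2-1/(2q),1))$ as your embedding chain ``via $D_A(2-1/(2q),1)=B^{4-1/q}_{q,1}\hookrightarrow B^{4-1/q}_{q,q}$'' presupposes; and since the smoothness $4-1/q$ is the same on both sides, no Sobolev-type drop is available, while $H^{4-1/q,q}=F^{4-1/q}_{q,2}\hookrightarrow B^{4-1/q}_{q,q}$ fails for $q<2$ by \eqref{eq:scaleBF} (contrast this with your other case, where $H^{6-2/q,q}\hookrightarrow B^{4-1/q}_{q,q}$ is a strict smoothness drop and does hold). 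Nor can Theorem \ref{thm:newton} produce the endpoint space $D_A(2-1/(2q),1)$ from the family $F^{\theta\beta}_{p,1}(\R;D(A^{(1-\theta)\beta}))$, $\theta\in[0,1]$, since it only delivers class-$J$ spaces at intermediate positions between the two chosen inner endpoints, never the $B_{q,1}$-improvement at the endpoint itself. What you need is precisely the conclusion of Proposition \ref{prop:extgeneral} (with $s=0$, $\alpha=2-1/(2q)$, $\gamma=0$), whose Step 3 obtains the $D_A(\alpha,1)$-refinement by passing to slightly negative temporal smoothness with inner space $D(A^{\alpha+\varepsilon})$, which in turn rests on the weighted Sobolev embedding of Theorem \ref{thm:SobB}; this is not reachable by the unweighted Lemma \ref{RI} plus Theorem \ref{thm:newton} alone. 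Replacing your citation of Lemma \ref{RI} by Proposition \ref{prop:extgeneral} in this case repairs the argument and is exactly what the paper does.
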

\begin{proof} To economize the notation we  write $L^q = L^q(\R^{d-1})$, $H^{s,q} = H^{s,q}(\R^{d-1})$ and so on. Throughout we consider an $m$-extension operator $E_+$ for the half-line from Lemma \ref{extension} with $m=3$, say. We will often use identities
$$(L^q, H^{k,q})_{\theta,p} = B_{q,p}^{\theta k},\qquad [L^q, H^{k,q}]_{\theta} = H^{\theta k,q}, \qquad \theta\in (0,1),\quad k\in \N.$$

\emph{Step 1.} Assume  $1-1/(2q) < 1/p$. Let $X = L^q$ and $A = (1-\Delta')^2$ with $D(A) = H^{4,q}$.  Then $H^{2,q} = D(A^{1/2}) \hookrightarrow D_A(1/2,\infty)$, which implies
$$\tilde{\mathbb E}_h \hookrightarrow F_{p,q}^{3/2-1/(2q)}(\R;X) \cap F_{p,q}^{1-1/(2q)}(\R; D_A(1/2, \infty)).$$
Thus by Theorem \ref{thm:main3}, $\tr_0$ maps $\tilde{\mathbb E}_h$ continuously into $D_A(3/2-1/(2q)-1/p,p) = B_{q,p}^{6-2/q - 4/p}.$

To obtain a right-inverse, we consider $\ext_{1,A}$ from Proposition \ref{prop:extgeneral} with respect to $A$. It follows that $\ext_{1,A}$ maps $D_A(3/2-1/(2q)-1/p,p)$ into
$$\mathbb Y = F_{p,1}^{3/2-1/(2q)}(\R;L^q) \cap F_{p,1}^{0}(\R; H^{6-2/q,q}),$$
also employing $D_A(3/2-1/(2q),1) = B_{q,1}^{6-2/q,q} \hookrightarrow H^{6-2/q,q},$ which is a consequence of \eqref{eq:TriebelLizorkinH} and \eqref{eq:monotony}.
Using $H^{6-2/q,q}\hookrightarrow B_{q,q}^{4-1/q}$ and the elementary embedding $F_{p,1}^{0}\hookrightarrow L^p$ from \eqref{eq:TriebelLizorkinH}, we obtain
$$\mathbb Y \hookrightarrow F_{p,1}^{0}(\R;B_{q,q}^{4-1/q})\hookrightarrow L^p(\R;B_{q,q}^{4-1/q}).$$
Further, it follows from Theorem \ref{thm:newton} with $\theta = \frac{1}{3-1/q}$ and $X_{\theta} = [L^q, H^{6-2/q,q}]_{\frac{1}{3-1/q}}  = H^{2,q}$ that
$$\mathbb Y \hookrightarrow F_{p,1}^{1-1/(2q)}(\R;H^{2,q}) \hookrightarrow F_{p,q}^{1-1/(2q)}(\R;H^{2,q}).$$
Altogether, we have shown that $\mathbb Y \hookrightarrow \tilde{\mathbb E}_h$, and thus $\mathcal R(h_0,h_1) = \ext_{1,A}h_0$ satisfies the requirements.

\emph{Step 2.} Assume $1-1/(2q) > 1/p$. We show the continuity of $\tr_0$. Consider $A = 1-\Delta'$ on $X = H^{2,q}$ with $D(A) = H^{4,q}$. Then $B_{q,q}^{4-1/q} = D_A(1-1/(2q),q)$, which together with \eqref{eq:TriebelLizorkinH} implies
\[\tilde{\mathbb E}_h\hookrightarrow F_{p,\infty}^{1-1/(2q)}(\R; X) \cap F^{0}_{p,\infty}(\R; D_A(1-1/(2q),q)).\]
Therefore, by Theorem \ref{thm:main3} we find that $\tr_0$ maps $\tilde{\mathbb E}_h$ continuously into
$$D_A(1-1/(2q)-1/p,p)  = B_{q,p}^{4-1/q-2/p}.$$

For a right-inverse we consider $A = 1-\Delta'$ on  $X= L^q$ with $D(A) = H^{2,q}$. Applying Proposition \ref{prop:extgeneral}, we get that $\mathcal R(h_0,h_1) = \ext_{1,A} h_0$ maps $B_{q,p}^{4-1/q-2/p} = D_{A}(2-1/(2q)-1/p,p)$
continuously into
\begin{equation*}
 \mathbb{Y} = F_{p,1}^{2-1/(2q)}(\R;X) \cap F_{p,1}^0(\R; D_{A}(2-1/(2q),1)).
\end{equation*}
The embeddings $D_{A}(2-1/(2q),1) = B^{4-1/q}_{q,1}\hookrightarrow B^{4-1/q}_{q,q}$ and \eqref{eq:TriebelLizorkinH} yield
$$\mathbb{Y} \hookrightarrow F_{p,q}^{3/2-1/(2q)}(\R;L^q) \cap L^p(\R; B^{4-1/q}_{q,q}).$$
Moreover,  Theorem \ref{thm:newton} applied with $\theta = \frac{1}{2-1/(2q)}$ and
$$X_{\theta} = (L^q, D_{A}(2-1/(2q),1))_{\frac{1}{2-1/(2q)},1} \hookrightarrow H^{2,q}$$ implies that $\mathbb Y \hookrightarrow F_{p,q}^{1-1/(2q)}(\R; H^{2,q})$. Hence $\mathbb Y\hookrightarrow \tilde{\mathbb E}_h$, showing that $\mathcal R$ maps as required.

\emph{Step 3.} Assume $ 1/2-1/(2q) > 1/p.$  We show the continuity of $\tr_0 \partial_t$.
Let $A = (1-\Delta')^2$ on $X = L^q$ with $D(A) = H^{4,q}$. Let $X_{\theta} = H^{4\theta,q}$ for $\theta\in [0,1]$.
Theorem \ref{thm:newton} gives
$$\tilde{\mathbb E}_h \hookrightarrow F_{p,q}^{3/2-1/(2q)}(\R; X) \cap F_{p,q}^{1-1/(2q)}(\R;X_{1/2}) \hookrightarrow F_{p,q}^1(\R; X_{1/2-1/(2q)}).$$
It thus follows from \cite[Proposition 3.10]{MeyVer1} that
$$\partial_t: \tilde{\mathbb E}_h \to F_{p,q}^{1/2-1/(2q)}(\R; X) \cap F_{p,q}^{0}(\R; X_{1/2-1/(2q)})$$
is continuous. Now Theorem \ref{thm:main3} implies that
$$\tr_0 \partial_t : \tilde{\mathbb E}_h  \to D_A(1/2-1/(2q)-1/p,p) = B_{q,p}^{2-2/q-4/p}$$
is continuous. For the right-inverse we let $X = L^q$, $A_0 = 1-\Delta'$ with $D(A_0) = H^{2,q}$ and $A_1 = (1-\Delta')^{2}$ with $D(A_1) = H^{4,q}$. Following the considerations in \cite[Section 4.1]{DPZ08}, we set
$$\mathcal R_0 = 2\ext_{1,A_0} - \ext_{1,2A_0}, \qquad \mathcal R_1 = \big( \ext_{1,A_1} - \ext_{1,2A_1}\big)A_1^{-1}.$$
Then $\tr_0 \mathcal R_0 = \text{id}$, $\tr_0 \partial_t \mathcal R_0 = 0$, $\tr_0 \mathcal R_1 = 0$ and $\tr_0 \partial_t \mathcal R_1 = \text{id}$. Hence
$$\mathcal R (h_0,h_1) = \mathcal R_0 h_0 + \mathcal R_1 h_1$$
satisfies $\tr_0 \mathcal R (h_0,h_1) = h_0$ and $\tr_0 \partial_t \mathcal R (h_0,h_1) = h_1$. In Step 2 we have shown that $\mathcal R_0 : X_h \to \tilde{\mathbb E}_h$ is continuous. Moreover, since $A_1^{-1}$ maps $X_{\partial_t h}$ to $B_{q,p}^{6-2/q-4/p}$ and since we have shown in Step 1 that $\ext_{1,A_1}$ maps $B_{q,p}^{6-2/q-4/p}$ to $\tilde{\mathbb E}_h$, we obtain that $\mathcal R_1 : X_{\partial_t h} \to \tilde{\mathbb E}_h$ is continuous. Therefore $\mathcal R: X_h \times X_{\partial_t h} \to \tilde{\mathbb E}_h$ is continuous as well.
\end{proof}

\begin{remark} \label{stefan-rem-1} Employing \cite[Corollary 3.12]{DK14} and an extension-restriction argument, for $\frac{2p}{p+1} < q < 2p$ the assertions of Lemma \ref{stefan-dt} remain true if one replaces $\tilde{\mathbb E}_h$, $\tilde{\mathbb F}_1$, $\tilde{\mathbb F}_2$ by $\mathbb E_h$, $\mathbb F_1$, $\mathbb F_2$. Similarly, for $\frac{2p}{p+1} < q < 2p$ the assertions of Proposition \ref{stefan-trace} remain true if one replaces $\tilde{\mathbb E}_h$ by $\mathbb E_h$.
\end{remark}

\begin{remark}\label{remark-DPZ} The methods from the proof above are not restricted to the special form of $\tilde{\mathbb E}_h$ and apply to general intersection spaces that arise in the context of the $L^p$-$L^q$-approach to initial-boundary value problems with inhomogeneous symbols as considered in \cite{DK14, DPZ08}. We can further allow temporal weights as in \cite{MS12} in order to obtain flexibility for the initial regularity. The weighted approach is very useful when studying the long-time behavior of solutions (see e.g. \cite{PSSS12}). For instance, the arguments from the proof above allow to determine the precise temporal trace space of
\begin{align*}
F_{p,q}^{1+\kappa_0}(\R,w_\gamma; L^q(\R^{d-1}))&  \cap  L^p(\R,w_\gamma; B_{q,q}^{l + 2m\kappa_0}(\R^{d-1}))\\
&  \cap H^{1,p}(\R,w_\gamma; B_{q,q}^{2m\kappa_0}(\R^{d-1})) \cap \bigcap_{j=0}^m F_{p,q}^{\kappa_j}(\R,w_\gamma; H^{k_j,q}(\R^{d-1})),
\end{align*}
where $w_{\gamma}(t) = |t|^\gamma$ for $\gamma \in (-1,p-1)$. This is the space of the boundary unknown in the $L^p$-$L^q$-approach to boundary value problems of relaxation type (see \cite[Section 2]{DPZ08} and \cite{MS12}). Here the numbers $\kappa_j$ and $k_j$ are determined by the orders of the differential operators involved in the problem.
\end{remark}

If temporal traces exist, then compatibility conditions for the data are required to obtain a strong solution $(u,h)\in \mathbb E_u \times \mathbb E_h$ of \eqref{stefan-inhom}. If $1-1/(2q) > 1/p$, then the static boundary conditions imply that $\llbracket u_0\rrbracket = 0$ and $g_1|_{t=0} = u_0 + \Delta'h_0$ necessarily hold as well. In case $1/2-1/(2q) > 1/p$ also $\partial_t h|_{t=0}$ exists, and thus the dynamic boundary condition implies that $g_2|_{t=0} + \llbracket \partial_\nu u_0 \rrbracket$ enjoys at least the same regularity as $\partial_t h |_{t=0}$, i.e., that $ g_2|_{t=0} + \llbracket \partial_\nu u_0 \rrbracket \in X_{\partial_t h} = B_{q,p}^{2-2/q - 4/p}(\R^{d-1}).$ Observe that this condition is not trivial: if $g_2$ is an arbitrary element of $\mathbb F_2$, then the trace theorem only gives $g_2|_{t=0}\in B_{q,p}^{1-1/q-2/p}(\R^{d-1})$, which is only half of the required smoothness.

After these considerations we can extend \cite[Theorem 4.40]{DK14} to nontrivial initial values and  show maximal $L^p$-$L^q$-regularity for the fully inhomogeneous problem \eqref{stefan-inhom}. For the convenience of the reader we recall the spaces
$$\mathbb{E}_0 = L^p(\R_+; L^q(\dot{\R}^{d})),  \qquad \E_u = H^{1,p}(\R_+; L^q(\dot{\R}^d))\cap L^p(\R_+; H^{2,q}(\dot{\R}^d)),\quad X_u = B_{q,p}^{2-2/p}(\dot{\R}^d),$$
$$\mathbb{F}_1 = F_{p,q}^{1-1/(2q)}(\R_+; L^q(\R^{d-1})) \cap L^p(\R_+; B_{q,q}^{2-1/q}(\R^{d-1})),$$
$$ \mathbb{F}_2 = F_{p,q}^{1/2-1/(2q)}(\R_+; L^q(\R^{d-1})) \cap L^p(\R_+; B_{q,q}^{1-1/q}(\R^{d-1})),$$
$$\E_h = F_{p,q}^{3/2-1/(2q)}(\R_+; L^q(\R^{d-1}))\cap F_{p,q}^{1-1/(2q)}(\R_+; H^{2,q}(\R^{d-1})) \cap L^p(\R_+; B_{q,q}^{4-1/q}(\R^{d-1})),$$
and further the trace spaces $X_h$ and $X_{\partial_t h}$ determined in Proposition \ref{stefan-trace}.

\begin{theorem} \label{stefan-thm} Let $p,q\in (1,\infty)$ such that  $\frac{2p}{p+1} < q < 2p$ and $1-1/(2q), 1/2-1/(2q) \neq  1/p$, and let $\mu > 0$. Then there is a unique strong solution $(u,h)\in \E_u \times \E_h$ of \eqref{stefan-inhom} if and only if
$$f\in \mathbb{E}_0,\qquad g_1\in \mathbb{F}_1, \qquad g_2\in \mathbb{F}_2, \qquad u_0 \in X_u, \qquad h_0 \in  X_h,$$
and the compatibility conditions
$$\llbracket u_0 \rrbracket = 0, \quad g_1|_{t=0} = u_0 - \Delta' h_0  \qquad \text{ if }\,1-1/(2q) >  1/p,$$
$$ g_2|_{t=0} + \llbracket \partial_\nu u_0\rrbracket \in X_{\partial_t h} \quad \text{ if }\,1/2-1/(2q) >  1/p,$$
are satisfied. There is a constant $C>0$, which is independent of the data, such that
$$\|u\|_{\E_u} + \|h\|_{\E_h} \leq C\big( \|f\|_{\mathbb{E}_0} + \|g_1\|_{\mathbb{F}_1} + \|g_2\|_{\mathbb{F}_2} + \|u_0\|_{X_u} + \|h_0\|_{X_h}\big).$$
\end{theorem}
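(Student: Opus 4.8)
The plan is to reduce the fully inhomogeneous problem \eqref{stefan-inhom} to the case of trivial initial values, which was already solved in \cite[Theorem 4.40]{DK14}, by subtracting off suitable reference solutions constructed from the trace and extension operators of the previous sections. The necessity of the stated regularity conditions follows immediately from the trace theorems: if $(u,h)\in\mathbb E_u\times\mathbb E_h$, then $u_0=\tr_0 u\in X_u$ by Theorem \ref{thm:main3} (via the extension-restriction argument and the interpolation identity $(L^q,H^{2,q})_{1-1/p,p}=B^{2-2/p}_{q,p}$), and $h_0=\tr_0 h\in X_h$ by Proposition \ref{stefan-trace} together with Remark \ref{stefan-rem-1}. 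The compatibility conditions are forced exactly as explained in the paragraph preceding the theorem: when $1-1/(2q)>1/p$ the static boundary conditions $\llbracket u\rrbracket=0$ and $u+\Delta'h=g_1$ have temporal traces at $t=0$, and when $1/2-1/(2q)>1/p$ the dynamic boundary condition $\partial_t h-\llbracket\partial_\nu u\rrbracket=g_2$ likewise admits a trace, forcing $g_2|_{t=0}+\llbracket\partial_\nu u_0\rrbracket$ into $X_{\partial_t h}$.

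For sufficiency I would proceed in two reduction steps. First, use the spatial problem: given $u_0\in X_u$, pick $\bar u\in\mathbb E_u$ with $\tr_0\bar u=u_0$ (the right-inverse to $\tr_0:\mathbb E_u\to X_u$), and given $h_0\in X_h$ and the prescribed value $h_1$ of $\partial_t h|_{t=0}$ when it exists, use the operator $\mathcal R$ from Proposition \ref{stefan-trace} (transferred to $\R_+$ via Remark \ref{stefan-rem-1}) to produce $\bar h=\mathcal R(h_0,h_1)\in\mathbb E_h$ with $\tr_0\bar h=h_0$ and $\tr_0\partial_t\bar h=h_1$. Here $h_1$ is chosen to be the value dictated by the dynamic boundary condition, i.e.\ $h_1=g_2|_{t=0}+\llbracket\partial_\nu u_0\rrbracket$ when $1/2-1/(2q)>1/p$, and $h_1$ is unconstrained (say $0$) otherwise. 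Setting $v=u-\bar u$, $k=h-\bar h$, the pair $(v,k)$ must solve \eqref{stefan-inhom} with modified data
\[
\tilde f=f-(\mu\bar u+\partial_t\bar u-\Delta\bar u),\quad \tilde g_1=g_1-(\bar u+\Delta'\bar h),\quad \tilde g_2=g_2-(\partial_t\bar h-\llbracket\partial_\nu\bar u\rrbracket),
\]
and with \emph{zero} initial values $v|_{t=0}=0$, $k|_{t=0}=0$. By Lemma \ref{stefan-dt} (and Remark \ref{stefan-rem-1}) together with the mapping properties of $\bar u$ and $\mathcal R$, we have $\tilde f\in\mathbb E_0$, $\tilde g_1\in\mathbb F_1$, $\tilde g_2\in\mathbb F_2$. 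The second reduction is to check the compatibility conditions of \cite[Theorem 4.40]{DK14} for this modified data: $\llbracket v|_{t=0}\rrbracket=0$ is automatic, $\tilde g_1|_{t=0}=g_1|_{t=0}-u_0+\Delta'h_0=0$ holds precisely because of the assumed static compatibility condition, and $\tilde g_2|_{t=0}$ has the required regularity (indeed it lies in $X_{\partial_t h}$ with $\tilde g_2|_{t=0}=g_2|_{t=0}+\llbracket\partial_\nu u_0\rrbracket-h_1=0$ by our choice of $h_1$) exactly because of the assumed dynamic compatibility condition. Then \cite[Theorem 4.40]{DK14} yields a unique $(v,k)\in\mathbb E_u\times\mathbb E_h$, and $(u,h)=(v+\bar u,k+\bar h)$ is the desired solution; the estimate follows by concatenating the norm bounds from the reference solutions, Lemma \ref{stefan-dt}, and \cite[Theorem 4.40]{DK14}. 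Uniqueness for \eqref{stefan-inhom} reduces to uniqueness in the zero-initial-value case, which is part of \cite[Theorem 4.40]{DK14}.

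The main obstacle I anticipate is the careful bookkeeping of the compatibility conditions and trace values at $t=0$, in particular making sure that the reference height function $\bar h$ carries the \emph{correct} value of $\partial_t h|_{t=0}$ in the regime $1/2-1/(2q)>1/p$ — this is the role of the two-parameter right-inverse $\mathcal R$ in Proposition \ref{stefan-trace}, whose construction via the combinations $\mathcal R_0,\mathcal R_1$ (with $A_0=1-\Delta'$ and $A_1=(1-\Delta')^2$) was engineered precisely to control both $\tr_0$ and $\tr_0\partial_t$ simultaneously. A secondary technical point is that all of Section \ref{sec:tracesintersections} works over $\R$, so every application must be routed through \cite[Corollary 3.12]{DK14} and an extension-restriction argument to pass to the half-line, which is where the restriction $\frac{2p}{p+1}<q<2p$ enters (besides its use in \cite[Theorem 4.40]{DK14} itself); the conditions $1-1/(2q),1/2-1/(2q)\neq 1/p$ are needed so that the trace theorems apply without the borderline exponent and so that the exponents appearing in $X_h$ and $X_{\partial_t h}$ are unambiguously determined.
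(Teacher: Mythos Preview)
Your overall strategy---subtract reference functions to reduce to trivial initial data and then invoke \cite[Theorem 4.40]{DK14}---matches the paper's, and your handling of the height reference $\bar h$ via the two-parameter right-inverse $\mathcal R$ from Proposition \ref{stefan-trace} is exactly right. The gap is in your treatment of the temperature reference $\bar u$. A generic right-inverse of $\tr_0:\mathbb E_u\to X_u$ yields $\bar u\in\mathbb E_u$ with $\bar u|_{t=0}=u_0$, but it gives no control over the spatial jump $\llbracket\bar u\rrbracket$ on $\R_+\times\R^{d-1}$. Hence the reduced problem for $(v,k)=(u-\bar u,h-\bar h)$ does \emph{not} have the form \eqref{stefan-inhom}: its second equation becomes the inhomogeneous condition $\llbracket v\rrbracket=-\llbracket\bar u\rrbracket$, and \cite[Theorem 4.40]{DK14}, which is stated for the system with homogeneous jump, does not apply. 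Even when the compatibility $\llbracket u_0\rrbracket=0$ holds, extending $u_{0,\pm}$ separately on each half-space via an abstract right-inverse will not keep the two spatial traces equal for $t>0$; and in the regime $1-1/(2q)<1/p$ there is no compatibility condition, so $\llbracket\bar u\rrbracket$ need not vanish even at $t=0$.

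The paper closes this gap by constructing $\widetilde u$ through parabolic solution operators rather than an abstract trace extension (Step~2 of the proof): one extends $u_0|_{\R_-^d}$ to all of $\R^d$, solves the full-space heat equation with data $f$ using \cite[Proposition 6.1]{DHP07} and restricts to get $w_-$ on $\R_-^d$; then one solves a half-space Dirichlet problem on $\R_+^d$ with boundary datum $w_-|_{\R^{d-1}}$ using \cite[Proposition 6.4]{DHP07} to obtain $w_+$. The glued function $\widetilde u$ then satisfies simultaneously $\llbracket\widetilde u\rrbracket=0$, the interior equation $\mu\widetilde u+\partial_t\widetilde u-\Delta\widetilde u=f$, and $\widetilde u|_{t=0}=u_0$. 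After this the residual problem for $(w,\sigma)$ genuinely has the form \eqref{stefan-inhom} with zero $f$, zero initial data, and homogeneous jump, so \cite[Theorem 4.40]{DK14} applies as you intended.
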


\begin{remark} \ \label{stefan-rem-5}
\begin{enumerate}[(i)]{}
 \item In case $p= q$ we precisely recover the result of \cite[Theorem 6.1]{EPS03}.
 \item Theorem \ref{stefan-thm} is the basic ingredient to treat the original two-phase Stefan problem with Gibbs-Thomson correction \cite[Problem (1.3)]{EPS03} in an $L^p$-$L^q$-setting.
 \item The restriction $\frac{2p}{p+1} < q < 2p$ is due to the results in \cite{DK14} and, as indicated there, should not be essential. If one  removes this condition in the results of \cite{DK14}, then the combination with our trace results from Proposition \ref{stefan-trace} gives maximal $L^p$-$L^q$ regularity for all $p,q\in (1,\infty)$, with the same arguments as given in the sequel.
\end{enumerate}
\end{remark}

\begin{proof}[Proof of Theorem \ref{stefan-thm}.] \emph{Step 1.} The necessity of the regularity of the data and the compatibility conditions are a consequence of our previous considerations. Further, uniqueness of solutions follows from the homogeneous case \cite[Theorem 4.40]{DK14}. In the following we show the existence of a solution for given data.

\emph{Step 2.} We claim that for all $f\in \mathbb E_0$ and $u_0 \in X_u$ with $\llbracket u_0 \rrbracket = 0$ if $1-1/p > 1/(2q)$ there is $\widetilde u \in \mathbb E_u$ satisfying
\begin{equation}\label{stefan-1}
 \mu \widetilde u + \partial_t \widetilde u - \Delta \widetilde u = f \quad \text{in }\R_+ \times \dot{\R}^d, \qquad \llbracket \widetilde u\rrbracket = 0\quad \text{on }\R_+\times \R^{d-1}, \qquad \widetilde u|_{t=0} = u_0 \quad \text{ in } \dot{\R}^d.
\end{equation}
To see this, extend $u_0|_{\R_-^d}$ to $\widetilde{u}_0 \in B_{q,p}^{2-2/p}(\R^d)$, and use \cite[Proposition 6.1]{DHP07} to define $w_- \in H^{1,p}(\R_+; L^q(\R_-^d))\cap L^p(\R_+; H^{2,q}(\R_-^d))$ as the restriction to $\R_-^d$ of the unique solution of
$$\mu v + \partial_t v -  \Delta v = f\quad \text{in }\R_+ \times \R^d, \qquad v|_{t=0} = \widetilde{u}_0 \quad \text{ in } \R^d.$$
Next, setting $u_{0,\pm} = u_0|_{\R_\pm^d}$, one has
$$u_{0,+}|_{\R^{d-1}} = u_{0,-}|_{\R^{d-1}} = w_-|_{\R^{d-1}, t=0}$$
if $1-1/p> 1/(2q)$, as a consequence of $\llbracket u_0 \rrbracket = 0$. We may thus use \cite[Proposition 6.4]{DHP07}  to define $w_+ \in H^{1,p}(\R_+; L^q(\R_+^d))\cap L^p(\R_+; H^{2,q}(\R_+^d))$ as the unique solution of
$$\mu v + \partial_t v -  \Delta v = f|_{\R_+^d} \quad \text{in }\R_+ \times \R_+^d, \quad  v|_{\R^{d-1}} = w_-|_{\R^{d-1}} \quad \text{on }\R_+\times \R^{d-1}, \quad v|_{t=0} = u_{0,+} \quad \text{ in } \R_+^d.$$
Now the function $\widetilde u\in \E_u$, defined by $\widetilde u|_{\R_\pm^d} = w_\pm$, satisfies \eqref{stefan-1}.

\emph{Step 3.} By Proposition \ref{stefan-trace} and Remark \ref{stefan-rem-1} there is $\widetilde{h} \in \E_h$ such that $\widetilde{h}|_{t=0} = h_0$ and $\partial_ t \widetilde{h}|_{t=0} = g_2|_{t=0} + \llbracket \partial_\nu u_0\rrbracket$ if $1/2-1/(2q) > 1/p$. For  $\widetilde h$ and the solution $\widetilde u$ of \eqref{stefan-1} we consider the problem
\begin{equation}\label{stefan-3}
\left \{
\begin{array}{rcll}
\mu w + \partial_t w -\Delta w & = &   0 & \text{ in }\R_+\times \dot{\R}^d,   \\
\llbracket w \rrbracket & = & 0 & \text{ on }\R_+\times \R^{d-1},\\
w + \Delta' \sigma  & = & g_1 - \widetilde{u} - \Delta' \widetilde{h}& \text{ on }\R_+\times \R^{d-1},  \\
\partial_t \sigma - \llbracket \partial_\nu w \rrbracket   & = & g_2 - \partial_t \widetilde{h} + \llbracket \partial_\nu \widetilde u\rrbracket & \text{ on }\R_+\times \R^{d-1}, \\
w|_{t=0}  & = & 0 & \text{ in }\dot{\R}^d,   \\
\sigma|_{t=0}  & = & 0 & \text{ on }\R^{d-1}.
\end{array}
\right.
\end{equation}
Since $\widetilde{u}|_{\R^{d-1}} + \Delta' \widetilde{h} \in \mathbb F_1$ by \cite[Proposition 6.4]{DHP07} and Lemma \ref{stefan-dt}, and further, by the compatibility assumption
$$g_1|_{t=0} - \widetilde{u}|_{\R^{d-1},t=0} - \Delta' \widetilde{h}|_{t=0} = 0\qquad \text{if }1-1/(2q) >  1/p,$$
it follows from \cite[Proposition 3.14]{DK14} that
$$g_1 - \widetilde{u}|_{\R^{d-1}} - \Delta' \widetilde{h} \in   {}_0F_{p,q}^{1-1/(2q)}(\R_+; L^q(\R^{d-1}))\cap L^p(\R_+; B_{q,q}^{2-1/p}(\R^{d-1})),$$
where ${}_0F_{p,q}^{\beta}$ denotes vanishing traces at $t=0$. We further have $\partial_t \widetilde{h} - \llbracket \partial_\nu \widetilde u\rrbracket \in \mathbb F_2$ by Remark \ref{stefan-rem-1} and  \cite[Proposition 6.4]{DHP07}, and as before it follows from $g_2|_{t=0} -\partial_ t \widetilde{h}|_{t=0} + \llbracket \partial_\nu u\rrbracket|_{t=0} = 0 $ if $1/2-1/(2q) > 1/p$ that
$$g_2 - \partial_t \widetilde{h} + \llbracket \partial_\nu \widetilde u \rrbracket \in {}_0F_{p,q}^{1/2-1/2p}(\R_+; L^q(\R^{d-1}))\cap L^p(\R_+; B_{q,q}^{1-1/p}(\R^{d-1})).$$
Hence \cite[Theorem 4.40]{DK14} provides a solution $(w,\sigma)\in \mathbb E_u\times \mathbb E_h$ of \eqref{stefan-3}. Now $(w+\widetilde u, \sigma + \widetilde h) \in \mathbb E_u\times \mathbb E_h$ is the solution of \eqref{stefan-inhom}. The asserted estimate of $(u,h)$ in terms of the data follows from the estimates in \cite{DHP07, DK14} for partially inhomogeneous problems as above and the continuity of the extension operator from Proposition \ref{stefan-trace}.
\end{proof}

\def\polhk#1{\setbox0=\hbox{#1}{\ooalign{\hidewidth
  \lower1.5ex\hbox{`}\hidewidth\crcr\unhbox0}}} \def\cprime{$'$}
  \def\cprime{$'$} \def\cprime{$'$}


\begin{thebibliography}{10}

\bibitem{AF03}
R.~A. Adams and J.~J.~F. Fournier.
\newblock {\em Sobolev spaces. 2nd edition}.
\newblock Academic Press, 2003.

\bibitem{Ama95}
H.~Amann.
\newblock {\em {Linear and quasilinear parabolic problems. Vol. 1: Abstract
  linear theory.}}
\newblock {Birkh{\"a}user}, 1995.

\bibitem{Am97}
H.~Amann.
\newblock Operator-valued {F}ourier multipliers, vector-valued {B}esov spaces,
  and applications.
\newblock {\em Math. Nachr.}, 186:5--56, 1997.

\bibitem{Amann05}
H.~Amann.
\newblock Nonlocal quasilinear parabolic equations.
\newblock {\em Uspekhi Mat. Nauk}, 60(6(366)):21--32, 2005.

\bibitem{Ama09}
H.~Amann.
\newblock {Anisotropic function spaces and maximal regularity for parabolic
  problems. Part 1: Function spaces}.
\newblock {Jindrich Necas Center for Mathematical Modeling Lecture Notes,
  Prague, Volume 6}, 2009.

\bibitem{Ber84}
M.Z. Berkolajko.
\newblock {On traces of generalized spaces of differentiable functions with
  mixed norm.}
\newblock {\em Sov. Math., Dokl.}, 30:60--64, 1984.

\bibitem{Can04}
M.~Cannone.
\newblock Harmonic analysis tools for solving the incompressible
  {N}avier-{S}tokes equations.
\newblock In {\em Handbook of mathematical fluid dynamics. {V}ol. {III}}, pages
  161--244. North-Holland, Amsterdam, 2004.

\bibitem{ClLi}
Ph. Cl{\'e}ment and S.~Li.
\newblock Abstract parabolic quasilinear equations and application to a
  groundwater flow problem.
\newblock {\em Adv. Math. Sci. Appl.}, 3(Special Issue):17--32, 1993/94.

\bibitem{DHP}
R.~Denk, M.~Hieber, and J.~Pr{\"u}ss.
\newblock {$R$}-boundedness, {F}ourier multipliers and problems of elliptic and
  parabolic type.
\newblock {\em Mem. Amer. Math. Soc.}, 166(788), 2003.

\bibitem{DHP07}
R.~Denk, M.~Hieber, and J.~Pr{\"u}ss.
\newblock {Optimal $L^{p}$- $L^{q}$-estimates for parabolic boundary value
  problems with inhomogeneous data.}
\newblock {\em Math. Z.}, 257(1):193--224, 2007.

\bibitem{DK14}
R.~Denk and M.~Kaip.
\newblock {\em {General parabolic mixed order systems in $L_p$ and
  applications}}, volume 239 of {\em Operator Theory: Advances and
  Applications}.
\newblock Birkh\"auser, Basel, 2013.

\bibitem{DPZ08}
R.~Denk, J.~Pr{\"u}ss, and R.~Zacher.
\newblock {Maximal $L_p$-regularity of parabolic problems with boundary
  dynamics of relaxation type.}
\newblock {\em J. Funct. Anal.}, 255(11):3149--3187, 2008.

\bibitem{DSS08}
R.~Denk, J.~Saal, and J.~Seiler.
\newblock {Inhomogeneous symbols, the Newton polygon, and maximal
  $L^p$-regularity.}
\newblock {\em Russ. J. Math. Phys.}, 15(2):171--191, 2008.

\bibitem{DiB84}
G.~Di~Blasio.
\newblock {Linear parabolic evolution equations in $L\sp p$-spaces.}
\newblock {\em Ann. Mat. Pura Appl., IV. Ser.}, 138:55--104, 1984.

\bibitem{EPS03}
J.~Escher, J.~Pr{\"u}ss, and G.~Simonett.
\newblock {Analytic solutions for a Stefan problem with Gibbs-Thomson
  correction.}
\newblock {\em J. Reine Angew. Math.}, 563:1--52, 2003.

\bibitem{Giga86}
Y.~Giga.
\newblock Solutions for semilinear parabolic equations in {$L^p$} and
  regularity of weak solutions of the {N}avier-{S}tokes system.
\newblock {\em J. Differential Equations}, 62(2):186--212, 1986.

\bibitem{GraModern}
L.~Grafakos.
\newblock {\em Modern {F}ourier analysis}, volume 250 of {\em Graduate Texts in
  Mathematics}.
\newblock Springer, New York, second edition, 2009.

\bibitem{Haase:2}
M.H.A. Haase.
\newblock {\em The functional calculus for sectorial operators}, volume 169 of
  {\em Operator Theory: Advances and Applications}.
\newblock Birkh\"auser Verlag, Basel, 2006.

\bibitem{HaMe96}
Y.-S. Han and Y.~Meyer.
\newblock A characterization of {H}ilbert spaces and the vector-valued
  {L}ittlewood-{P}aley theorem.
\newblock {\em Methods Appl. Anal.}, 3(2):228--234, 1996.

\bibitem{HLP}
G.~H. Hardy, J.~E. Littlewood, and G.~P{\'o}lya.
\newblock {\em Inequalities}.
\newblock Cambridge, at the University Press, 1952.
\newblock 2d ed.

\bibitem{JS08}
J.~Johnsen and W.~Sickel.
\newblock On the trace problem for {L}izorkin-{T}riebel spaces with mixed
  norms.
\newblock {\em Math. Nachr.}, 281(5):669--696, 2008.

\bibitem{Kaipthesis}
M.~Kaip.
\newblock {\em {General parabolic mixed order systems in $L_p$ and
  applications}}.
\newblock PhD thesis, {University of Constance}, 2012.

\bibitem{KPW}
M.~K{\"o}hne, J.~Pr{\"u}ss, and M.~Wilke.
\newblock On quasilinear parabolic evolution equations in weighted
  {$L_p$}-spaces.
\newblock {\em J. Evol. Equ.}, 10(2):443--463, 2010.

\bibitem{KuWe}
P.C. Kunstmann and L.W. Weis.
\newblock Maximal {$L\sb p$}-regularity for parabolic equations, {F}ourier
  multiplier theorems and {$H\sp \infty$}-functional calculus.
\newblock In {\em Functional analytic methods for evolution equations}, volume
  1855 of {\em Lecture Notes in Math.}, pages 65--311. Springer, Berlin, 2004.

\bibitem{Lun09}
A.~Lunardi.
\newblock {\em {Interpolation theory. 2nd ed.}}
\newblock {Scuola Normale Superiore di Pisa}, 2009.

\bibitem{McC84}
T.R. McConnell.
\newblock On {F}ourier multiplier transformations of {B}anach-valued functions.
\newblock {\em Trans. Amer. Math. Soc.}, 285(2):739--757, 1984.

\bibitem{MS11}
M.~Meyries and R.~Schnaubelt.
\newblock Interpolation, embeddings and traces for anisotropic fractional
  {S}obolev spaces with temporal weights.
\newblock {\em J. Funct. Anal.}, 262:1200--1229, 2012.

\bibitem{MS12}
M.~Meyries and R.~Schnaubelt.
\newblock Maximal regularity with temporal weights for parabolic problems with
  inhomogeneous boundary conditions.
\newblock {\em Math. Nachr.}, 285:1032--1051, 2012.

\bibitem{MeyVer1}
M.~Meyries and M.C. Veraar.
\newblock {Sharp embedding results for spaces of smooth functions with power
  weights.}
\newblock {\em Stud. Math.}, 208(3):257--293, 2012.

\bibitem{NVWmax}
J.M.A.M.~van Neerven, M.C. Veraar, and L.~Weis.
\newblock {Stochastic maximal $L^{p}$-regularity.}
\newblock {\em Ann. Probab.}, 40(2):788--812, 2012.

\bibitem{Pruss02}
J.~Pr{\"u}ss.
\newblock Maximal regularity for evolution equations in {$L_p$}-spaces.
\newblock {\em Conf. Semin. Mat. Univ. Bari}, (285):1--39 (2003), 2002.

\bibitem{PSSS12}
J.~Pr{\"u}ss, S.~Shimizu, Y.~Shibata, and G.~Simonett.
\newblock On well-posedness of incompressible two-phase flows with phase
  transitions: the case of equal densities.
\newblock {\em Evol. Equ. Control Theory}, 1(1):171--194, 2012.

\bibitem{PS04}
J.~Pr{\"u}ss and G.~Simonett.
\newblock {Maximal regularity for evolution equations in weighted
  $L_p$-spaces.}
\newblock {\em Arch. Math.}, 82(5):415--431, 2004.

\bibitem{SchmSiunpublished}
H.-J. Schmeisser and W.~Sickel.
\newblock {Traces, Gagliardo-Nirenberg inequailties and Sobolev type embeddings
  for vector-valued function spaces}.
\newblock Jena manuscript, 2004.

\bibitem{Se}
R.~Seeley.
\newblock Interpolation in {$L\sp{p}$} with boundary conditions.
\newblock {\em Studia Math.}, 44:47--60, 1972.

\bibitem{Shi11}
S.~Shimizu.
\newblock {Local solvability of free boundary problems for the two-phase
  Navier-Stokes equations with surface tension in the whole space}.
\newblock {\em Progress in Nonlinear Differential Equations and Their
  Applications}, 80:647--686, 2011.

\bibitem{Sob75}
P.E. Sobolevskii.
\newblock {Fractional powers of coercively positive sums of operators}.
\newblock {\em Sov. Math. Dokl.}, 16:1638--1641, 1975.

\bibitem{Stein93}
E.M. Stein.
\newblock {\em Harmonic analysis: real-variable methods, orthogonality, and
  oscillatory integrals}, volume~43 of {\em Princeton Mathematical Series,
  Monographs in Harmonic Analysis, III}.
\newblock Princeton University Press, Princeton, NJ, 1993.

\bibitem{Tri83}
H.~Triebel.
\newblock {\em Theory of function spaces}, volume~78 of {\em Monographs in
  Mathematics}.
\newblock Birkh\"auser Verlag, Basel, 1983.

\bibitem{Tr1}
H.~Triebel.
\newblock {\em Interpolation theory, function spaces, differential operators}.
\newblock Johann Ambrosius Barth, Heidelberg, second edition, 1995.

\bibitem{Triebel3}
H.~Triebel.
\newblock {\em Theory of function spaces. {III}}, volume 100 of {\em Monographs
  in Mathematics}.
\newblock Birkh\"auser Verlag, Basel, 2006.

\bibitem{Tur00}
B.O. Turesson.
\newblock {\em Nonlinear potential theory and weighted {S}obolev spaces},
  volume 1736 of {\em Lecture Notes in Mathematics}.
\newblock Springer-Verlag, Berlin, 2000.

\bibitem{Wei02}
P.~Weidemaier.
\newblock Maximal regularity for parabolic equations with inhomogeneous
  boundary conditions in {S}obolev spaces with mixed {$L_p$}-norm.
\newblock {\em Electron. Res. Announc. Amer. Math. Soc.}, 8:47--51, 2002.

\bibitem{Wei05}
P.~Weidemaier.
\newblock Lizorkin-{T}riebel spaces of vector-valued functions and sharp trace
  theory for functions in {S}obolev spaces with a mixed {$L_p$}-norm in
  parabolic problems.
\newblock {\em Mat. Sb.}, 196(6):3--16, 2005.

\bibitem{We}
L.W. Weis.
\newblock Operator-valued {F}ourier multiplier theorems and maximal {$L\sb
  p$}-regularity.
\newblock {\em Math. Ann.}, 319(4):735--758, 2001.

\bibitem{Zac03}
R.~Zacher.
\newblock {\em Quasilinear parabolic problems with nonlinear boundary
  conditions.}
\newblock PhD thesis, Martin-Luther Universit{\"a}t Halle-Wittenberg, 2003.

\bibitem{Zac05}
R.~Zacher.
\newblock Maximal regularity of type {$L_p$} for abstract parabolic {V}olterra
  equations.
\newblock {\em J. Evol. Equ.}, 5(1):79--103, 2005.

\bibitem{Zim89}
F.~Zimmermann.
\newblock On vector-valued {F}ourier multiplier theorems.
\newblock {\em Studia Math.}, 93(3):201--222, 1989.

\end{thebibliography}
\end{document}